\newtheorem{thm}{Theorem}[section]
\newtheorem{cor}[thm]{Corollary}
\newtheorem{lem}[thm]{Lemma}
\newtheorem{prop}[thm]{Proposition}
\theoremstyle{definition}
\newtheorem{defin}[thm]{Definition}
\numberwithin{equation}{section}
\begin{document}

\title[An alternative for minimal group actions]{An alternative for minimal group actions on totally regular curves}

\author[E.~Shi]{Enhui Shi}
\address[E. Shi]{Soochow University, Suzhou, Jiangsu 215006, China}
\email{ehshi@suda.edu.cn}

\author[H.~Xu]{Hui Xu}
\address[H. Xu]{CAS Wu Wen-Tsun Key Laboratory of Mathematics, University of Science and
Technology of China, Hefei, Anhui 230026, China}

\email{huixu2734@ustc.edu.cn}

\author[X.~Ye]{Xiangdong Ye}
\address[X. Ye]
{CAS Wu Wen-Tsun Key Laboratory of Mathematics, University of Science and
Technology of China, Hefei, Anhui 230026, China}
\email{yexd@ustc.edu.cn}

\begin{abstract}
Let $G$ be a countable group and $X$ be a totally regular curve. Suppose that $\phi:G\rightarrow {\rm Homeo}(X)$ is a minimal action. Then we show an alternative: either the action is topologically conjugate to isometries on the circle $\mathbb S^1$ (this implies that $\phi(G)$ contains an abelian subgroup of index  at most 2), or  has a quasi-Schottky subgroup (this implies that $G$ contains the free nonabelian group $\mathbb Z*\mathbb Z$). In order to prove the alternative,   we get a new characterization of totally regular curves by means of the notion of measure; and prove an escaping lemma holding for any minimal group action on infinite compact metric spaces, which improves a trick in
  Margulis' proof of the alternative in the case that $X=\mathbb S^1$.
\end{abstract}

\date{\today}

\setcounter{page}{1}

\maketitle

\section{Introduction}

The notion of amenable group was first introduced by von Neumann, which forbids the existence of a paradoxical decomposition of a group. On
the contrary, the free nonabelian group $\mathbb Z*\mathbb Z$ admits such a decomposition. This is the core of Banach-Tarski's decomposition
of the sphere. So, the following question asked by von Neumann is very natural:  whether every nonamenable group contains $\mathbb Z*\mathbb Z$ (see \cite{Day}).
This question was answered  positively by J. Tits for linear groups:

\begin{thm}\cite[Theorem 1]{Tits}\label{Tits alternative}
A finitely generated linear group either contains a free nonabelian subgroup or has a solvable subgroup of finite index.
\end{thm}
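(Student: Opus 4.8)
The plan is to follow the ``ping-pong'' strategy, whose engine is the Table-Tennis Lemma: if a group generated by $a,b$ acts on a set $\Omega$, and there are disjoint nonempty subsets $A,B\subseteq\Omega$ with $a^{n}(B)\subseteq A$ and $b^{n}(A)\subseteq B$ for every $n\neq 0$, then $\langle a,b\rangle$ is free of rank $2$. Thus the whole problem reduces to producing two elements of $\Gamma$ acting with suitable contracting dynamics on an appropriate space, unless $\Gamma$ turns out to be virtually solvable. First I would reduce the field: since $\Gamma\subseteq\mathrm{GL}_n(K)$ is finitely generated, I may take $K$ to be finitely generated over its prime field.

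Next I would set up the dichotomy through the Zariski closure $G=\overline{\Gamma}^{\,Z}\subseteq\mathrm{GL}_n$. If the identity component $G^{0}$ is solvable, then $\Gamma\cap G^{0}$ is a solvable subgroup of finite index and we are in the second alternative. So assume $G^{0}$ is not solvable; then, after dividing out the radical and passing to a finite-index subgroup, $\Gamma$ maps onto a Zariski-dense subgroup of a semisimple group, and it suffices to build a free subgroup there. The natural arena for ping-pong is the projective space $\mathbb{P}(V)$ of an irreducible representation $V$, and the elements I want are \emph{proximal} ones: $g\in\mathrm{GL}(V)$ possessing a unique eigenvalue of strictly maximal absolute value, which is moreover simple. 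Such a $g$ acts on $\mathbb{P}(V)$ with north--south dynamics (an attracting point $x_g^{+}$ and a repelling hyperplane $H_g^{-}$), which is exactly the behaviour the Table-Tennis Lemma consumes once two such elements are placed in general position.

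The hard part, and the technical heart of the argument, is manufacturing a proximal element, since over $\mathbb{R}$ or $\mathbb{C}$ a semisimple element need not have a dominant eigenvalue of multiplicity one. The resolution combines two ingredients. First, a valuation-theoretic step: because $K$ is finitely generated and $\Gamma$ is not virtually solvable, some element carries an eigenvalue $\lambda$ that is not a root of unity, so by Kronecker's theorem together with the product formula there is a \emph{local field} $k$ and an absolute value with $|\lambda|\neq 1$, and I pass to $\mathrm{GL}_n(k)$. Second, a highest-weight step: even when the chosen element has several eigenvalues of maximal modulus on $V$, one replaces $V$ by an irreducible constituent of a suitable exterior power $\wedge^{d}V$, corresponding to a \emph{generic} highest weight attained with multiplicity one, on which the element becomes genuinely proximal. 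Zariski density is what guarantees that such a strongly irreducible representation exists and that the image is unbounded, which is the true source of a proximal element.

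Finally I would upgrade a single proximal element into a free pair. Using strong irreducibility (no finite union of proper subspaces is invariant) together with density, I can choose $h\in\Gamma$ so that the attracting/repelling data of $g$ and of $hgh^{-1}$ are in general position, i.e.\ the relevant points and hyperplanes avoid one another. Passing to high powers $a=g^{N}$ and $b=(hgh^{-1})^{N}$ makes the contraction arbitrarily strong, so small neighbourhoods $A$ of $x_a^{\pm}$ and $B$ of $x_b^{\pm}$ satisfy the ping-pong inclusions, and the Table-Tennis Lemma yields $\langle a,b\rangle\cong\mathbb{Z}*\mathbb{Z}$. I expect the valuation-plus-highest-weight construction of the proximal element to be the principal obstacle: it is the one step where the finite generation of the field, the failure of virtual solvability, and the representation theory of semisimple groups must all be marshalled together, with no genuine shortcut available.
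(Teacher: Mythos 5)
This statement is not proved in the paper at all: it is quoted as background, with the proof deferred entirely to the cited source \cite{Tits}, so there is no internal argument to compare yours against. Measured instead against Tits' original proof, your outline is a faithful reconstruction of it: the reduction to a field finitely generated over the prime field, the dichotomy via the Zariski closure (virtually solvable when $G^{0}$ is solvable, otherwise a Zariski-dense image in a semisimple quotient), the lemma that failure of virtual solvability forces some eigenvalue $\lambda$ that is not a root of unity, the embedding into a local field with $|\lambda|\neq 1$, the passage to an irreducible constituent of $\wedge^{d}V$ to manufacture a proximal element, and finally general position plus high powers feeding the ping-pong lemma. These are exactly the steps of Tits' argument, and you have correctly located the technical heart (the valuation-plus-highest-weight construction of a proximal element) rather than hiding it.

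That said, as a proof your text is a roadmap, not an execution: the quantitative attractor--repeller estimates, the verification that the chosen constituent of $\wedge^{d}V$ is strongly irreducible for $\Gamma$, and the eigenvalue lemma itself are all asserted rather than proved. Two points deserve explicit mention if you flesh this out. First, your reduction ``it suffices to build a free subgroup in the semisimple quotient'' is valid, but for a reason worth stating: any surjection of a group onto $\mathbb Z * \mathbb Z$ splits (free groups are projective), so a free subgroup of the quotient pulls back to one in $\Gamma$. Second, north--south dynamics for ping-pong requires \emph{both} $a$ and $a^{-1}$ to be proximal with the four pieces of data $(x_a^{+},H_a^{-},x_{a^{-1}}^{+},H_{a^{-1}}^{-})$ in general position; proximality of $g$ does not automatically give proximality of $g^{-1}$ on the same representation, and Tits spends genuine effort arranging this, so your neighborhoods of $x_a^{\pm}$ presuppose a step you have not secured. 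Neither issue is a wrong turn --- both are handled in \cite{Tits} along the lines you indicate --- but they are gaps a complete write-up must close.
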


In general, von Neumann's question has a negative answer; one may consult \cite{Lodha, Monod, Ol} for many counterexamples.
Now, Theorem \ref{Tits alternative} is known as the Tits alternative. An interesting question is which group has the Tits alternative.
Many important groups coming from geometry and topology were shown to have this property (see e.g. \cite[p. 545]{Dru}). Nevertheless,
the exact analogue of Tits alternative does not hold even for subgroups of $C^\infty$ diffeomorphism  group of $\mathbb S^1$ (see \cite{Ghys1}).
As a replacement of the Tits alternative, G. Margulis proved the following theorem which answered positively a conjecture proposed by Ghys.
One may see \cite{Navas, Ghys2} for a different proof of this theorem by Ghys.

\begin{thm}\cite[Theorem 3]{Margulis}\label{Margulis alternative}
Let a group $G$ act by homeomorphisms on $\mathbb S^1$. Then either there is a $G$-invariant probability measure on $\mathbb S^1$, or $G$ contains a free nonabelian subgroup.
\end{thm}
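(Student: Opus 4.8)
The plan is to establish the dichotomy by assuming that $\mathbb{S}^1$ carries no $G$-invariant probability measure and then constructing a free nonabelian subgroup of $G$ through a ping-pong argument, after first normalizing the dynamics. The two essential ingredients are a dynamical reduction to a minimal, strongly contracting situation and the Tits--Klein ping-pong lemma; the work is concentrated in extracting contraction from the mere failure of measure invariance.

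First I would reduce to the case where the action is minimal. Since $\mathbb{S}^1$ is compact, the action admits a nonempty closed minimal invariant set $K$, and for a group acting on the circle $K$ is either finite, all of $\mathbb{S}^1$, or a Cantor set. If $K$ is finite, averaging the Dirac masses over $K$ yields a $G$-invariant probability measure, contrary to hypothesis. If $K$ is a Cantor set, I would collapse each connected component of $\mathbb{S}^1\setminus K$ to a point; the resulting monotone degree-one map $h:\mathbb{S}^1\to\mathbb{S}^1$ semiconjugates the action to a minimal action $\psi$ of $G$. Here a $\psi$-invariant measure is necessarily non-atomic (a minimal measure-preserving action on the circle is conjugate to a group of rotations), so it charges the countable set of collapsed points trivially and pulls back through $h$ to a $\phi$-invariant measure; thus if $\psi$ preserves a measure then so does $\phi$, and we are in the first alternative. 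Consequently I may assume from the outset that the action is minimal and preserves no measure.

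The heart of the argument is to show that such a minimal action is contracting: I would produce a sequence $(g_n)$ in $G$ and arcs $I_n\subseteq\mathbb{S}^1$ with $\length(I_n)\to 1$ while $\length(g_nI_n)\to 0$, normalizing the total length to $1$. This is exactly where the absence of an invariant measure is used — if no element ever crushed almost the whole circle into a tiny arc, one could average iterates to manufacture an invariant mean and hence an invariant measure, a contradiction. This is Margulis' measure-theoretic trick, and it is the step the present paper streamlines through its escaping lemma. Passing to a subsequential limit of the associated monotone maps and then taking high powers, I would extract a single element $a$ with north--south dynamics: an attracting fixed point $a^+$ and a repelling fixed point $a^-$, with every other point driven toward $a^+$ under forward iteration. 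Conjugating by a suitable $g\in G$ — chosen using minimality so that $g\{a^+,a^-\}$ is disjoint from $\{a^+,a^-\}$ — produces a second element $b=gag^{-1}$ whose fixed-point pair $\{b^+,b^-\}$ is disjoint from that of $a$, giving four distinct fixed points.

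Finally I would run the ping-pong. Replacing $a,b$ by sufficiently high powers $a^N,b^N$, I choose pairwise disjoint closed neighborhoods $U^{\pm},V^{\pm}$ of $a^{\pm},b^{\pm}$ so that $a^{\pm N}$ maps the complement of $U^{\mp}$ into $U^{\pm}$, and likewise $b^{\pm N}$ maps the complement of $V^{\mp}$ into $V^{\pm}$. The Tits--Klein ping-pong lemma then certifies that $\langle a^N,b^N\rangle$ is free of rank two in the image; since free groups are projective, the surjection of $\langle a^N,b^N\rangle\le G$ onto this free group splits, so $G$ itself contains the required $\mathbb{Z}*\mathbb{Z}$. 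The main obstacle throughout is the middle step — squeezing genuine north--south contraction out of nothing more than the failure of measure invariance; once that contraction is in hand, both the reduction to minimality and the ping-pong are comparatively routine.
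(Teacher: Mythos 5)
The paper does not prove this statement at all---it is quoted from Margulis as background---so the only meaningful comparison is with the paper's proof of its own analogue, Theorem \ref{main theorem}, whose architecture (reduce to a minimal action, extract contraction from the failure of measure invariance, run ping-pong) your sketch correctly mirrors. Within that skeleton, however, your middle step has a genuine gap: you cannot extract a single element with north--south dynamics, and the construction you describe does not produce one. A subsequential limit of the contracting homeomorphisms $g_n$ is a degenerate monotone map, constant off a single point; it is not an element of $G$, so ``taking high powers'' of it is meaningless. If instead you mean a fixed $g_n$ with $g_n(\mathbb{S}^1\setminus J_n)\subset K_n$ for small arcs $J_n,K_n$, then: (i) nothing in the limit-measure argument prevents the source and sink regions from overlapping, and when $J_n\cap K_n\neq\emptyset$ such an element can have irrational rotation number and no fixed point whatsoever; (ii) even when $J_n\cap K_n=\emptyset$ (which forces a fixed point in each arc), the fixed set inside $K_n$ may be a nondegenerate closed set, so the assertion that ``every other point is driven toward $a^+$'' fails. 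This is exactly why Margulis, and Section 7 of this paper, run the ping-pong directly with the group elements $g_n$ for large $n$: one only needs $g_n(X\setminus V_1)\subset U_1$ and $g_n^{-1}(X\setminus U_1)\subset V_1$ for neighborhoods $U_1\supset A$ (the finite sink set) and $V_1\supset B$ (the finite source set), after first arranging $A\cap B=\emptyset$. No hyperbolic-like element is needed, and none is available.

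Two further steps are asserted rather than proved, and both are precisely the points this paper isolates as nontrivial. First, the displacement ``chosen using minimality so that $g\{a^+,a^-\}$ is disjoint from $\{a^+,a^-\}$'' is not a formal consequence of minimality: moving one finite set entirely off another is Corollary \ref{margulis trick}, which Margulis obtained from B.~Neumann's theorem on groups covered by cosets and which this paper reproves via the escaping lemma---it is a named ingredient, not a freebie. Second, your derivation of contraction from the absence of an invariant measure (``average iterates to manufacture an invariant mean'') is not an argument: $G$ may be nonamenable, so there is no averaging procedure that converges to an invariant measure. The rigorous route is the dichotomy of Theorem \ref{dichotomy}: in the equicontinuous case the closure of $\phi(G)$ is a compact group whose Haar measure pushes forward to an invariant measure (on the circle, the rotations case), while in the sensitive case contractible arcs are produced as in Theorem \ref{contract neighbor}, and Proposition \ref{con fin} then yields the finite-support limit measure from which the source--sink data is read off. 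Your outline has the right overall shape, but the two load-bearing steps---contraction and the ping-pong configuration---are exactly the ones left unconstructed.
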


Recently, some people are interested in studying the alternative phenomena for group actions on curves (continua of one dimension). For example, it is implied
by several authors' work that every subgroup of a dendrite homeomorphism group either has a finite orbit or contains a free nonabelian group (see \cite{Duch, Malyutin, Glasner}). One may refer to \cite{AN, MN, Shi, SY1, SY2} for some related investigations in this direction.

The purpose of the paper is to establish an alternative for group actions on a class of curves which contains all dendrites and all graphs. Explicitly, we obtain the following
theorem.

\begin{thm}\label{main theo}
Let $G$ be a countable group and $X$ be a totally regular curve. Suppose that $\phi:G\rightarrow {\rm Homeo}(X)$ is a minimal action. Then either the action is topologically conjugate to isometries on the circle $\mathbb S^1$ (this implies that $\phi(G)$ contains an abelian subgroup of index  at most 2), or  has a quasi-Schottky subgroup (this implies that $G$ contains the free nonabelian group $\mathbb Z*\mathbb Z$).
\end{thm}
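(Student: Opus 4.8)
The plan is to split into two cases according to whether the $G$-action admits an invariant Borel probability measure. Since $X$ is compact metrizable, the space $M(X)$ of Borel probability measures is weak-$*$ compact and convex, and $G$ acts on it affinely, so the question of whether an invariant measure exists is a genuinely dynamical one. I would first dispose of the case in which some $\mu\in M(X)$ is $G$-invariant, aiming for the first horn of the alternative, and then show that the \emph{absence} of an invariant measure feeds directly into a ping-pong construction yielding the quasi-Schottky subgroup.

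In the invariant-measure case the strategy is to upgrade $\mu$ into a $G$-invariant metric so that $\phi(G)$ acts equicontinuously. Here I would invoke the new measure-theoretic characterization of totally regular curves promised in the abstract: on such a curve the arc-length structure can be recovered from a suitable measure, and because minimality forces $\mathrm{supp}(\mu)=X$, the invariant measure should yield an invariant length, hence an invariant metric $d$ for which $\phi(G)\subseteq\mathrm{Isom}(X,d)$. The closure $K$ of $\phi(G)$ in the uniform topology is then a compact group acting minimally, hence transitively, so $(X,d)$ is a compact homogeneous totally regular curve; the only such curve is $\mathbb S^1$. Thus the action is topologically conjugate to an action by isometries of $\mathbb S^1$, and $K\subseteq O(2)$; since $K\cap SO(2)$ is abelian of index at most $2$ in $K$, the group $\phi(G)$ contains an abelian subgroup of index at most $2$.

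In the remaining case there is no invariant measure, and I would build a free subgroup by ping-pong. The escaping lemma is the engine: for a minimal action on an infinite compact metric space it produces, for prescribed small target sets, group elements driving the complement of one small set into another — a topological contraction replacing Margulis' interval estimates on $\mathbb S^1$. Concretely I would use the lack of a fixed point for the affine action on $M(X)$ to extract, by a concentration/averaging argument, sequences $g_n$ whose push-forwards concentrate mass, and then convert this into honest topological contraction, choosing the relevant open sets to be finite unions of arcs with finite boundary as total regularity permits. Selecting two contracting elements $a,b$ together with four pairwise disjoint open sets in standard ping-pong position then shows $\langle a,b\rangle$ is free of quasi-Schottky type, whence $G\supseteq\mathbb Z*\mathbb Z$.

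The main obstacle, I expect, is the passage from measure-theoretic concentration to clean combinatorial ping-pong dynamics on a space carrying no linear or cyclic order. On $\mathbb S^1$ everything is controlled through lengths of intervals; on a general totally regular curve the sets in play can be topologically complicated, and the role of the measure characterization is precisely to furnish a substitute for arc-length with which the escaping lemma can be applied quantitatively while keeping the ping-pong sets disjoint and correctly nested. Verifying the boundary-finiteness conditions — where total regularity, rather than mere regularity, is what makes the arcs compose as required — is where I anticipate the real work to lie; the invariant-measure rigidity forcing $X\cong\mathbb S^1$ is the secondary difficulty.
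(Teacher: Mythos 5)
Your division into ``invariant measure'' versus ``no invariant measure'' differs from the paper's, which splits the minimal action into equicontinuous versus sensitive (Theorem \ref{dichotomy}), and both of your bridges have genuine gaps. In the invariant-measure case you pass from an invariant $\mu$ to an invariant metric via the measure characterization (Theorem \ref{sub theorem}), but that theorem only asserts the \emph{existence} of some atomless measure with the property that $\mu(K_n)\rightarrow 0$ forces $\mathrm{diam}(K_n)\rightarrow 0$; an arbitrary $G$-invariant measure, though atomless and fully supported by minimality, need not have this property. For instance, on a dendrite with dense branch points a full-support atomless measure can vanish on a nondegenerate nowhere dense arc $A$, and subcontinua shrinking to $A$ then have measure tending to $0$ while their diameters stay bounded below. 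Consequently $\rho(x,y)=\inf\{\mu(K): K \text{ a subcontinuum containing } x,y\}$ is only an invariant pseudometric, possibly neither separating points nor compatible with the topology, so equicontinuity and the conjugacy to $\mathbb S^1$ do not follow as claimed. The paper handles this horn with no measure at all: equicontinuity gives conjugacy to translations on a homogeneous space (Theorem \ref{euqicontinuous}), Whyburn's theorems (Theorems \ref{loc sep}, \ref{finite order pt}) give a point of finite order, homogeneity makes all orders equal and finite, and Ayres' theorem (Theorem \ref{same order}) then identifies $X$ as a simple closed curve.

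In the second horn, the mere absence of an invariant measure does not produce the concentrating sequences you invoke; your ``concentration/averaging argument'' is a placeholder for the actual engine of the proof. The paper derives contraction from \emph{sensitivity}: since a totally regular curve has a point of finite order, Lemma \ref{convergence continua} forces a sequence of neighborhoods with uniformly bounded boundary cardinality and diameters bounded below to contain a common open set, which yields a contractible neighborhood for every point (Theorem \ref{contract neighbor}); strong $\epsilon$-proximality (Lemmas \ref{contractible} and \ref{finite support}) then pushes the good measure $\mu$ of Theorem \ref{sub theorem} to a finitely supported $\nu$, and it is exactly at the step converting $\mu(g_n^{-1}K)\rightarrow 0$ into $\mathrm{diam}(g_n^{-1}K)\rightarrow 0$ that the measure characterization does its work, much as you anticipated. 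You also misattribute the contraction to the escaping lemma: in the paper it serves instead to move $\mathrm{supp}(\nu)$ off the countably many separating points of infinite order (Lemmas \ref{infinite sep pts} and \ref{stable component}) and, via Corollary \ref{margulis trick}, to position the finite sets $A$, $B$, $hA$, $hB$ disjointly for the ping-pong. Your dichotomy could in principle be reconciled with the paper's --- with the sensitive-case machinery in hand, an invariant measure would be pushed to one of finite support, contradicting minimality on an infinite space, so an invariant measure forces equicontinuity --- but that reconciliation requires precisely the sensitivity argument your sketch omits.
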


Here we remark that the minimality condition in the theorem is not very strict, as there are many natural examples of minimal group actions on curves coming from
geometry (see e.g. \cite{Bowditch, Minsky}).  Margulis also established an alternative for minimal group actions on the circle \cite[Theorem 2]{Margulis}. The proof of Theorem \ref{main theo} follows the same line as in \cite{Margulis}; however, since the topology of the concerned curves are more
complicated than that of the circle, we have to develop some topological and dynamical ideas to overcome the difficulties encountered.

During the process of the proof, we get a new characterization of totally regular curves by means of the notion of measure, which may have its own interest in continuum theory.

\begin{thm}\label{sub theorem}
A continuum $X$ is totally regular if and only if there is an atomless probability Borel measure $\mu$ on $X$ such that
for every subcontinuum sequence $(K_i)_{i=1}^\infty$ satisfying $\mu(K_i)\rightarrow 0$, we always have ${\rm {diam}}(K_i)\rightarrow 0$.
\end{thm}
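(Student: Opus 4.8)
The plan is to prove both implications by routing through two classical characterizations of total regularity: that a continuum $X$ is totally regular if and only if it carries a compatible metric of finite length (finite $1$-dimensional Hausdorff measure $\mathcal H^1$), and that $X$ is totally regular if and only if it is finitely Suslinian (every family of pairwise disjoint subcontinua with diameters bounded away from $0$ is finite). I will use the finite-length form for the forward implication and the finitely Suslinian form for the converse. As a preliminary I record that the conclusion $\operatorname{diam}(K_i)\to 0$ for a sequence of subcontinua does not depend on the chosen compatible metric: if $d,\rho$ both induce the topology of the compact space $X$ and $\operatorname{diam}_\rho(K_i)\to 0$, then choosing $a_i,b_i\in K_i$ with $d(a_i,b_i)\ge \operatorname{diam}_d(K_i)-1/i$ and passing to subsequences $a_i\to a$, $b_i\to b$, one has $\rho(a_i,b_i)\le \operatorname{diam}_\rho(K_i)\to 0$, forcing $a=b$ and hence $\operatorname{diam}_d(K_i)\to 0$. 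This frees me to work in whatever compatible metric is convenient.

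For the forward direction, suppose $X$ is totally regular and fix, by the finite-length characterization, a compatible metric $d$ with $L:=\mathcal H^1_d(X)<\infty$. I set $\mu:=L^{-1}\mathcal H^1_d$, a Borel probability measure which is atomless since singletons have zero length. The crux is the elementary bound $\operatorname{diam}_d(K)\le \mathcal H^1_d(K)$ for every subcontinuum $K$: picking $a,b\in K$ nearly realizing the diameter, the $1$-Lipschitz map $x\mapsto d(x,a)$ carries the continuum $K$ onto an interval containing $[0,d(a,b)]$, and $\mathcal H^1$ does not increase under $1$-Lipschitz maps. Thus $\operatorname{diam}_d(K)\le L\,\mu(K)$ for all subcontinua, so $\mu(K_i)\to 0$ yields $\operatorname{diam}_d(K_i)\to 0$, and by the preliminary observation $\operatorname{diam}(K_i)\to 0$ in the original metric.

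For the converse, assume an atomless $\mu$ with the stated property is given. I first upgrade to the uniform statement: for every $\epsilon>0$ there is $\delta>0$ with $\mu(K)\ge\delta$ whenever $\operatorname{diam}(K)\ge\epsilon$; otherwise, taking $\delta=1/i$ would produce subcontinua $K_i$ with $\mu(K_i)\to 0$ but $\operatorname{diam}(K_i)\ge\epsilon$, contradicting the hypothesis. Finite Suslinianness is then immediate: a pairwise disjoint family of subcontinua each of diameter $\ge\epsilon$ consists of disjoint sets of measure $\ge\delta$, so at most $\lfloor 1/\delta\rfloor$ of them occur. Hence $X$ is finitely Suslinian, and by the cited equivalence $X$ is totally regular. (Note this direction never uses atomlessness; that condition is included so that the measure produced in the forward direction literally matches the statement.)

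The main obstacle, and essentially the only nonelementary ingredient, is the finite-length characterization invoked in the forward direction, namely producing a compatible metric in which a totally regular continuum has finite $\mathcal H^1$; the Lipschitz/Hausdorff inequality, the metric-independence of the diameter condition, and the counting argument are all routine. Accordingly I would isolate the two continuum-theoretic inputs—total regularity $\Rightarrow$ finite length, and finitely Suslinian $\Rightarrow$ total regularity—as known facts to cite (or prove separately), and then the measure-theoretic characterization follows as above.
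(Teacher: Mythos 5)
Your forward implication is essentially correct: once Theorem~\ref{finite meas} supplies a compatible metric $d$ with $L:=L^1(X,d)<\infty$ (this quantity agrees with the $1$-dimensional Hausdorff measure $\mathcal{H}^1_d(X)$, since coverings can be disjointified into decompositions), the normalized measure $L^{-1}\mathcal{H}^1_d$ is an atomless Borel probability measure, your Lipschitz argument gives $\operatorname{diam}_d(K)\le \mathcal{H}^1_d(K)$ for every subcontinuum $K$, and your compactness argument correctly shows that the conclusion $\operatorname{diam}(K_i)\rightarrow 0$ does not depend on the compatible metric. This is a genuine (and slightly cleaner) variant of the paper's construction, which instead runs a Carath\'eodory Method~II construction over covers by continua with a Whitney size function $\tau$ as pre-measure, proves $\mu(C)\ge\tau(C)$ for continua, and bounds $\mu(X)\le 2L^1(X,d)$; both routes rest on Theorem~\ref{finite meas}. (Both your statement and the paper implicitly assume $X$ nondegenerate, since a singleton is totally regular but carries no atomless probability measure.)

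The converse direction, however, has a genuine gap: the equivalence you cite, ``totally regular $\Leftrightarrow$ finitely Suslinian,'' is false, and your argument extracts from $\mu$ exactly the weaker property. The Sierpi\'nski gasket $G$ is a counterexample: it is finitely Suslinian (fix $n$ with $2^{-n}<\varepsilon$; distinct level-$n$ cells of $G$ meet only in vertices, and a continuum cannot be partitioned into finitely many pairwise disjoint nonempty closed pieces, so any subcontinuum of diameter $\ge\varepsilon$ must contain one of the finitely many level-$n$ vertices, whence a pairwise disjoint family of such continua is finite) --- indeed $G$ is even a regular curve, and every regular continuum is finitely Suslinian --- but $G$ is \emph{not} totally regular: its local separating points are exactly the countably many cell vertices, so the nondegenerate subcontinuum $G$ itself violates Whyburn's criterion, Theorem~\ref{loc sep}. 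Thus your counting argument (which is fine as far as it goes, including the uniform upgrade ``$\operatorname{diam}(K)\ge\varepsilon\Rightarrow\mu(K)\ge\delta$'') lands strictly short of total regularity, and since $G$ admits no measure with the stated property (by the theorem itself), no corrected citation can bridge the distance: this direction genuinely requires a construction, and it is the hard half of the theorem. The paper's proof uses $\mu$ to define a new compatible metric $\rho(x,y)=\inf\{\mu(K): K\ \text{a subcontinuum containing } x \text{ and } y\}$, verifies compatibility (via the fact that the measure hypothesis forces $X$ to be a Peano continuum), shows $X$ is Suslinian and hence $1$-dimensional, and then covers $X$ by finitely many subcontinua of small $\rho$-diameter meeting with multiplicity at most $2$, yielding $L^1_\varepsilon(X,\rho)\le 2$ for every $\varepsilon$ and hence total regularity by Theorem~\ref{finite meas}. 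To repair your proof you would need to reproduce some such passage from $\mu$ to a finite-length compatible metric; finite Suslinianness alone cannot do it.
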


We also get an escaping lemma holding for any minimal group action on infinite compact metric space. This lemma is an extension of a
trick used by Margulis in \cite{Margulis}, and we avoid using the Neumann's theorem in group theory as he did.

\begin{lem}[Escaping lemma]\label{escaping lemma}
Let $X$ be an infinite compact metric space and let a countable group $G$ act on $X$ minimally. Then for any countable subset
$C\subset X$ and any finite subset $F\subset X$, there always exits a sequence $(g_n)$ in $G$ and a finite set $K$ in $X$ such that
$g_nF\rightarrow K$ and $K\cap C=\emptyset$.
\end{lem}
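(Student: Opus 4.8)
The plan is to realize the desired limit set $K$ as a point of a minimal subsystem of a suitable product system, and then to locate a \emph{coordinate-generic} point there by a Baire category argument. Write $F=\{x_1,\dots,x_m\}$ and consider the diagonal action of $G$ on the product $X^m$. Let $\bar x=(x_1,\dots,x_m)$ and let $Z=\overline{G\bar x}\subseteq X^m$ be its orbit closure, a compact $G$-invariant set. By Zorn's lemma choose a minimal subsystem $M\subseteq Z$. For each $i$ the coordinate projection $\pi_i\colon X^m\to X$ is continuous and $G$-equivariant, so $\pi_i(M)$ is a nonempty compact $G$-invariant subset of $X$; since $G\curvearrowright X$ is minimal, $\pi_i(M)=X$. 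In particular each $\pi_i(M)$ is infinite.

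The core step is to show that for every $i\in\{1,\dots,m\}$ and every $c\in C$ the closed set $\pi_i^{-1}(c)\cap M$ is nowhere dense in $M$. Suppose not; then it contains a nonempty set $V$ that is open in $M$. Because $M$ is minimal, the $G$-translates of $V$ cover $M$, and by compactness finitely many suffice: $M=\bigcup_{j=1}^k g_jV$. Since $g_jV\subseteq g_j\pi_i^{-1}(c)=\pi_i^{-1}(g_jc)$, this forces $\pi_i(M)\subseteq\{g_1c,\dots,g_kc\}$, contradicting that $\pi_i(M)=X$ is infinite. This covering argument is exactly the place where a counting/covering input is needed, and invoking minimality of $M$ here is what lets us dispense with Neumann's theorem; I expect this to be the main obstacle, in the sense that the decisive choice is to work in $M$ rather than in $Z$, so that the ``orbit of a nonempty open set is everything'' principle becomes available.

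Granting the nowhere-density claim, the finish is soft. The space $M$ is a nonempty compact metric space, hence a Baire space, while $\bigcup_{i=1}^m\bigcup_{c\in C}\big(\pi_i^{-1}(c)\cap M\big)$ is a countable union of closed nowhere dense sets and therefore meager in $M$. Thus there is a point $\bar y=(y_1,\dots,y_m)\in M$ lying outside this union, i.e.\ with $y_i\notin C$ for every $i$. Since $\bar y\in M\subseteq\overline{G\bar x}$, I would pick $g_n\in G$ with $g_n\bar x\to\bar y$, so that $g_nx_i\to y_i$ for each $i$. Setting $K=\{y_1,\dots,y_m\}$, the estimate $d_H(g_nF,K)\le\max_i d(g_nx_i,y_i)\to 0$ shows $g_nF\to K$ in the Hausdorff metric, while $K\cap C=\emptyset$ by the choice of $\bar y$, and $K$ is finite (of cardinality at most $m$). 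The hypotheses that $X$ is infinite and that the action is minimal enter precisely through $\pi_i(M)=X$ being infinite and through the covering principle on the minimal set $M$.
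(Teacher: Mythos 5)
Your proof is correct, and it takes a genuinely different route from the paper's. Every step checks: $\pi_i(M)$ is a nonempty compact $G$-invariant subset of the minimal system $X$, hence equals $X$; the nowhere-density claim is sound, since $\pi_i^{-1}(c)\cap M$ is already closed, a nonempty relatively open $V$ inside it would have finitely many translates covering the compact minimal set $M$, and equivariance gives $\pi_i(g_jV)=\{g_jc\}$, forcing $\pi_i(M)\subseteq\{g_1c,\dots,g_kc\}$ --- impossible because $X$ is infinite; countability of $C$ then legitimizes the Baire argument in the compact metric space $M$, and the estimate $d_H(g_nF,K)\leq\max_i d(g_nx_i,y_i)$ finishes. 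The paper instead runs recurrence machinery: it first replaces $F$ by a tuple of pairwise non-proximal points (Lemma \ref{prox lemma}), invokes the theorem that every point of the product system is proximal to an almost periodic point (Theorem \ref{prox almost}), and then performs an explicit induction, using syndeticity of the return sets of the almost periodic tuple to build nested neighborhoods $U_{j,i}$ whose intersections yield the limit set $K$ while dodging the points $c_i$ one at a time; the paper's ``Claim A'' (some return-time element moves the tuple off $c_{i+1}$, else an orbit would be finite) plays exactly the role of your covering argument. Both routes avoid Neumann's theorem, which is the point the paper emphasizes. What your approach buys: it is shorter and softer --- the only inputs are the fact that finitely many translates of a nonempty open set cover a compact minimal system, plus Baire category, with no proximality theory and no nested-neighborhood bookkeeping --- and you even recover for free the extra feature that the limit tuple $(y_1,\dots,y_m)$ is almost periodic, since it lies in the minimal set $M$, which is what the paper extracts from Theorem \ref{prox almost}. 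What the paper's construction buys: the preliminary non-proximality reduction keeps the limit points $b_1,\dots,b_k$, and hence the $z_j$'s, pairwise distinct, so $|K|=|F|$ there, whereas your $K$ may collapse coordinates; since the lemma only asks that $K$ be finite, nothing is lost for the application in Section 7.
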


\medskip
{\bf Note.} We always assume that the group $G$ appeared in this article is a countable discrete topological group; in particular, it is secondly countable and locally compact.

\medskip
The paper is organized as follows. In Section 2, we introduce some basic notions and facts which will be used in this article. Then we  show the existence of contractible neighborhoods for minimal and sensitive actions on regular curves with a point of finite order in Section 3, and we use size function to construct a measure on continua in Section 4. In Section 5, we give a new characterization of totally regular curves via measures. We prove the escaping lemma for minimal actions in Section 6. Finally, we conclude the alternative for minimal actions on totally regular curves in Section 7.

\section{Preliminaries}

In this section, we will introduce some notions, notations, and facts in continuum theory, measure theory, and the theory of dynamical system, which will be used in the sequel.

\subsection{Characterizations of totally regular continua and the circle} By a {\it continuum}, we mean a connected compact metric space. A continuum $X$ is said to be \textit{nondegenerate} if it is not a single point. We say that $X$ is a {\it Peano continuum} if it is locally connected.
If a continuum $X$ does not contain uncountably many mutually disjoint nondegenerate subcontinua, then $X$ is called \textit{Suslinian}.
A continuum $X$ is said to be
\begin{enumerate}
\item {\it regular} if every point $x\in X$ has an open neighborhood basis $\mathcal N_x$ each member of which has finite boundary;

\item \textit{totally regular} if for any countable subset $F$ of a continuum $X$, there is a basis $\mathcal{B}$ of open sets for $X$ such that for each $B\in\mathcal{B}$, $F\cap \partial_X(B)=\emptyset$ and the boundary $\partial_X(B)$ of $B$ is finite.
\end{enumerate}
It is known that graphs and dendrites are totally regular; Suslinian continua are of one dimension; and regular
curves are Peano continua of dimension $1$. 

There have been many equivalent characterizations of totally regular continua (see \cite[Theorem 7.5]{NTT}).
Now, we recall a characterization given by S. Eilenberg and O. Harrold. Let $(X,d)$ be a metric space. For any $\varepsilon>0$, let $$L_{\varepsilon}^1(X,d)=\inf\sum_{n=1}^\infty\text{diam}(X_i),$$
where the infimum is taken over all decompositions $X=X_1\cup X_2\cup X_3\cup\cdots$ of $X$ such that $\text{diam}(X_i)<\varepsilon$ for each $i=1,2,\cdots$. Then $L_{\varepsilon}^1(X,d)$ is non-decreasing with respect to $\varepsilon$. Let
$$ L^1(X,d)=\lim_{\varepsilon\rightarrow 0^+}L_{\varepsilon}^1(X,d).$$
 If $L^1(X,d)<\infty$, then we say that $(X,d)$ has {\it finite linear measure.}

\begin{thm}[\cite{Harrold}]\label{finite meas}
A continuum $X$ is totally regular if and only if it has finite linear measure.
\end{thm}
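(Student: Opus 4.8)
The statement is an ``iff'' between a topological property (total regularity) and a metric one (finite linear measure); since linear measure is not a topological invariant (a tame arc can be remetrized to have infinite $L^1$, e.g.\ as a von Koch curve), the reading I would prove is that $X$ is totally regular if and only if it carries \emph{some} compatible metric $d$ with $L^1(X,d)<\infty$. The easy implication is that finite linear measure forces total regularity. Fix a metric $d$ with $L^1(X,d)=M<\infty$ and a point $x\in X$, and consider the $1$-Lipschitz function $f(y)=d(x,y)$. The plan is to control the metric spheres $S(x,r)=f^{-1}(r)$ by showing $\int_0^\infty \#S(x,r)\,dr\le M$. To see this, take for each $\varepsilon>0$ a decomposition $X=\bigcup_i X_i$ with $\operatorname{diam}(X_i)<\varepsilon$ and $\sum_i\operatorname{diam}(X_i)\le M+\varepsilon$; since $f$ is $1$-Lipschitz, $f(X_i)$ lies in an interval of length $\le\operatorname{diam}(X_i)$, so $\sum_i\operatorname{diam}(X_i)\ge\int_0^\infty \#\{i:X_i\cap S(x,r)\neq\emptyset\}\,dr$. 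For each fixed $r$, any finite subset of $S(x,r)$ lands in distinct pieces once $\varepsilon$ is smaller than its pairwise distances, so $\liminf_{\varepsilon\to0}\#\{i:X_i\cap S(x,r)\neq\emptyset\}\ge \#S(x,r)$; Fatou's lemma then yields the claimed inequality (this is just Eilenberg's inequality for the distance function).

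Consequently $S(x,r)$ is finite for all but a Lebesgue-null set of radii $r$. Given a countable set $F$, the spheres $\{S(x,r)\}_{r>0}$ are pairwise disjoint, so $S(x,r)\cap F\neq\emptyset$ for at most countably many $r$. Hence for every $\delta>0$ I can choose $0<r<\delta$ with $S(x,r)$ finite and $S(x,r)\cap F=\emptyset$. Since $\partial_X B(x,r)\subseteq S(x,r)$ always holds, the ball $B(x,r)$ is then an open set with finite boundary disjoint from $F$; letting $x$ and $\delta$ vary, these balls form a basis of $X$, which is exactly total regularity.

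For the converse—total regularity implies the existence of a compatible metric of finite linear measure—I would build the metric from the finite-boundary structure. Applying total regularity to a fixed countable dense set $D$ furnishes a countable basis $\{B_n\}$ whose boundaries $P_n=\partial_X B_n$ are finite and miss $D$; set $P=\bigcup_n P_n$. Each pair $(\overline{B_n},\,X\setminus B_n)$ is separated by the finite set $P_n$, and because $\{B_n\}$ is a basis these finite cuts separate all points of $X$. I would then pick, for each $n$, a continuous separating function $f_n\colon X\to[0,1]$ adapted to the $n$-th cut and define $\rho(x,y)=\sum_n c_n\,|f_n(x)-f_n(y)|$ (or a path/length version of this), with weights $c_n$ chosen so that the total length contributed across the finitely many points of each $P_n$ is summable, e.g.\ $c_n$ comparable to $2^{-n}/\#P_n$. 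Since $\rho$ separates points and $X$ is compact, $\rho$ induces the original topology; and the finiteness of each $P_n$ is what should make each stage add only a bounded amount of linear measure, giving $L^1(X,\rho)\le\sum_n(\text{bounded stage contribution})<\infty$.

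The main obstacle is precisely this construction and its length estimate: one must produce the $f_n$ compatibly across the nested cuts and verify finite linear measure, which amounts to bounding $\sum_i\operatorname{diam}(X_i)$ for decompositions subordinate to $P_1,\dots,P_N$ as $N\to\infty$. A cleaner but heavier alternative is to realize the totally regular continuum as an inverse limit of finite graphs $G_N$ whose bonding maps add only summable total edge length, and to take for $\rho$ the induced path metric; finiteness of $L^1$ is then immediate from summability, while total regularity of $X$ is what secures such a graph representation. Either way, the ``if'' direction is the clean part, and the genuine work lies in remetrizing a totally regular continuum so that it has finite length.
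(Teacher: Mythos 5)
You should know at the outset that the paper offers no proof of this statement: it is imported wholesale from Eilenberg--Harrold \cite{Harrold}, so there is no internal argument to compare against, and your proposal must stand on its own. Your reading of the statement is the correct one, and it is also the reading the paper tacitly uses (in the proof of Theorem \ref{total dim}, the ($\Longleftarrow$) direction builds a \emph{new} compatible metric $\rho$ with $L^1(X,\rho)\leq 2$ and only then invokes this theorem). Your proof of the easy implication is correct and essentially complete: the coarea-type (Eilenberg) inequality $\int_0^\infty \#S(x,r)\,dr\leq L^1(X,d)$ for the $1$-Lipschitz function $d(x,\cdot)$, the pairwise disjointness of the spheres, and $\partial_X B(x,r)\subseteq S(x,r)$ together yield, for each countable $F$, a basis of balls with finite boundaries missing $F$. (A minor repair: pass to $\overline{X_i}$ so that $f(\overline{X_i})$ is compact and the counting functions are measurable before applying Tonelli and Fatou; nothing else changes.)

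The converse, however, contains a genuine gap --- one you flag yourself, but it is worth saying precisely why the proposed construction fails as stated. The metric $\rho(x,y)=\sum_n c_n\,|f_n(x)-f_n(y)|$ with $c_n\approx 2^{-n}/\#P_n$ controls \emph{diameters}, but $L^1(X,\rho)$ is governed by the \emph{total variation} of the coordinate functions along $X$, not by their sup norms: already on $X=[0,1]$, a continuous Urysohn-type function adapted to a two-point cut can oscillate so often that a single term $c_nf_n$ contributes infinite length, so summability of the weights buys nothing. What is needed is a choice of the $f_n$ with uniformly summable variation, made compatibly across all the nested finite cuts $P_n$ --- and that estimate is exactly the substance of the theorem, not a detail to be checked afterwards. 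Your fallback via inverse limits of finite graphs with summable added edge length is indeed the route the literature takes (this representation is among the equivalent characterizations in \cite[Theorem 7.5]{NTT}), but asserting that total regularity ``secures such a graph representation'' assumes the theorem rather than proves it: extracting graph approximations with length control from the finite-boundary basis is precisely the hard work of \cite{Harrold}. Note also that you cannot shortcut through the measure $\mu$ of Section 4 of this paper, since its finiteness is itself deduced there from finite linear measure; that route is circular. As it stands you have a complete proof of one implication and, for the other, a plan whose decisive estimate is missing.
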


Let $X$ be a topological space and $x\in X$. Let $\beta$ be a cardinal number. We say that $x$ is of {\it order $\leq \beta$}, written ${\rm ord}(x)\leq\beta$,
provided that $x$ has an open neighborhood basis $\mathcal N$ such that the cardinality of the boundary of each $U\in \mathcal N$ is less than or equal to $\beta$;
if ${\rm ord}(x)\leq\beta$ but ${\rm ord}(x)\nleq\alpha$ for any $\alpha<\beta$, then we say that $x$ is of {\it order $\beta$};
if $\beta<\aleph_0$, we say that $x$ is of {\it finite order}. The following characterization of the simple closed curve is due to  W. Ayres.

\begin{thm}[\cite{Ayres}, Corollary 3]\label{same order}
The simple closed curve is the only continuum all of whose points are of the same finite order.
\end{thm}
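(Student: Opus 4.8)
The easy direction is immediate: on the simple closed curve $\mathbb{S}^1$ every point plainly has order $2$, so I only need the converse. The plan is to show that a nondegenerate continuum $X$ in which every point has one and the same finite order $n$ must satisfy $n=2$, and must then be homeomorphic to $\mathbb{S}^1$. (The degenerate case of a single point, which has order $0$, is tacitly excluded by nondegeneracy.)

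First I would record that the hypothesis forces $X$ to be a regular curve: every point having a neighborhood basis with finite boundaries is exactly regularity, and regular curves are Peano continua of dimension $1$; thus $X$ is locally connected and arcwise connected. Next I rule out small $n$. The value $n=0$ is impossible, since a nondegenerate connected space cannot possess arbitrarily small clopen neighborhoods. To exclude $n=1$, fix $a\neq b$, choose an arc $A$ from $a$ to $b$, and let $c$ be an interior point of $A$. If $\operatorname{ord}(c)=1$, then $c$ has an arbitrarily small neighborhood $U$ with $|\partial U|=1$ and $a,b\notin\overline U$; but as $A$ runs from $a$ through $c$ to $b$ it must cross $\partial U$ at two distinct parameter values, both mapping into the single point of $\partial U$, contradicting injectivity of the arc. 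Hence the common order satisfies $n\geq 2$.

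The crux is to show $n=2$. Here I would invoke the classical theorem (Menger--Whyburn) that in any regular curve the set of points of order $\geq 3$ is at most countable. If we had $n\geq 3$, then \emph{every} point of $X$ would be a point of order $\geq 3$; but $X$, being a nondegenerate continuum, is uncountable, a contradiction. Therefore $n=2$.

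Finally, with every point of order exactly $2$, I would identify $X$ as a compact connected $1$-manifold. For each $x\in X$ choose a connected open neighborhood $U$ with $\partial U=\{u_1,u_2\}$; passing to the subcontinuum $\overline U$, I would verify that $\overline U$ is a continuum whose only non-cut points are $u_1$ and $u_2$, so that by the classical characterization of the arc as the continuum with exactly two non-cut points, $\overline U$ is an arc having $x$ in its interior. Thus every point of $X$ has a neighborhood homeomorphic to an open interval, i.e. $X$ is a $1$-manifold without boundary; being compact and connected, it is homeomorphic to $\mathbb{S}^1$. I expect the main obstacle to lie precisely in this last step: making rigorous that order $2$ in $X$ is inherited by the local subcontinuum $\overline U$ and that $\overline U$ is genuinely irreducible between $u_1$ and $u_2$ with exactly two non-cut points, which is what licenses the arc characterization and the gluing of the local arcs into a global manifold structure.
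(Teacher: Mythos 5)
The paper offers no proof of this statement at all: it is imported verbatim as Corollary~3 of Ayres's 1931 paper, so your attempt has to be judged on its own merits rather than against an argument in the text. Your preliminary reductions are correct: the hypothesis makes $X$ a regular curve, hence a Peano continuum of dimension~$1$; $n=0$ is impossible by connectedness; and your exclusion of $n=1$ is sound (the component of $A^{-1}(U)$ containing the parameter of $c$ is an interval whose two endpoints map into the single boundary point of $U$, contradicting injectivity of the arc).

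The crux step, however, rests on a false lemma. It is \emph{not} true that a regular curve has at most countably many points of order $\geq 3$. Counterexample: let $T$ be the dendrite realization of the infinite binary tree with level-$n$ edges of length $4^{-n}$, so that its endpoint set is a Cantor set $C$, and let $X$ be three copies of $T$ glued along $C$ by identifying corresponding endpoints. Edge-interior points of $X$ have order $2$ and interior nodes have order $3$ (small triods lying at positive distance from $C$), while each $c\in C$ has order exactly $3$: the union of the three copies of the open cone below the depth-$n$ node over $c$ is open in $X$ with boundary precisely the three copies of that node, and these neighborhoods shrink to $c$; conversely three arcs emanate from $c$, one into each copy, disjoint except at $c$, so ${\rm ord}(c)\geq 3$. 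Thus $X$ is a regular curve (indeed of finite linear measure, hence even totally regular) with uncountably many points of order $3$. Note that the points of $C$ are not local separating points --- deleting $c$ leaves the three local sheets connected through nearby identified endpoints --- so this does not contradict Whyburn's theorem (Theorem~\ref{finite order pt}) that uncountably many \emph{local separating} points force a point of order $2$; that true theorem cannot be upgraded to the countability statement you invoke, because points of order $\geq 3$ need not be local separating. Consequently your exclusion of $n\geq 3$ has no proof, and this is exactly where the substance of Ayres's result lies: one must exploit that \emph{every} point has the same order $n$, not merely that the order-$\geq 3$ set would be uncountable. (Your final step is also only a sketch, but it is reparable: once $n=2$ is secured, the statement that a nondegenerate continuum all of whose points have order $2$ is a simple closed curve is classical, and your outline via the two-non-cut-point characterization of the arc can be completed. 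The fatal gap is the false countability lemma.)
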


A point $p$ in a continuum $X$ is called a \textit{local separating point} provided that there is some neighborhood $U$ of $p$ such that the connected component $C$ of $U$ containing $p$ is separated by $p$ in $U$. Using the concept of local separating, Whyburn gave the following equivalent characterization of totally regular curve.

\begin{thm}[\cite{Whyburn}, Theorem 4]\label{loc sep}
A continuum $X$ is totally regular if and only if every nondegenerate subcontinuum of $X$ contains uncountable local separating points.
\end{thm}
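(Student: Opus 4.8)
The plan is to prove both implications, using the finite-linear-measure characterization (Theorem \ref{finite meas}) as the main handle. First I would record the elementary fact that total regularity passes to subcontinua: if $K\subseteq X$ is a subcontinuum and $B$ is open in $X$ with finite boundary disjoint from a prescribed countable set, then $\partial_K(B\cap K)\subseteq \partial_X(B)\cap K$ is again finite, so restricting a finite-boundary basis for $X$ to $K$ witnesses total regularity of $K$. Consequently the forward implication reduces to the single statement (P): \emph{a nondegenerate totally regular continuum has uncountably many local separating points}, since applying (P) to each nondegenerate subcontinuum (itself totally regular) yields the conclusion.

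To prove (P), I would use that a nondegenerate totally regular continuum is a nondegenerate Peano continuum, hence contains an arc $A$ with endpoints $a\neq b$. I claim all but countably many interior points of $A$ are local separating points of $X$. Fix a countable dense subset $D\subseteq A$ containing $a,b$; by the definition of total regularity there is a basis $\mathcal B$ of open sets whose boundaries are finite and disjoint from $D$. For an interior point $p\notin D$, choose $B\in\mathcal B$ with $p\in B$ and $\overline B$ small enough to miss $a,b$; since $A$ is connected and leaves $B$, the sub-arc of $A$ through $p$ inside the component $C$ of $B$ meets the finite boundary $\partial C\subseteq\partial B$ on both sides of $p$. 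The only way $p$ could fail to separate $C$ is if arbitrarily small loops passed through $p$; but a totally regular continuum is Suslinian (finite linear measure forbids an uncountable family of pairwise disjoint nondegenerate subcontinua), so only countably many points of $A$ can carry such loops. Hence all but countably many interior points of $A$ separate their component $C$, exhibiting uncountably many local separating points. The mild obstacle here is making the separation argument precise, i.e. passing from separation of the local arc to separation of the whole component; this is exactly where the Suslinian property is invoked.

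For the converse I would argue the contrapositive: assuming $X$ is not totally regular, I must produce a nondegenerate subcontinuum with at most countably many local separating points. By Theorem \ref{finite meas}, $X$ has infinite linear measure, and since a finite cover of $X$ by small-diameter pieces each of finite linear measure would force $L^1(X)<\infty$, compactness yields a point $x_0$ every neighborhood of which has infinite linear measure, equivalently a point of infinite order. The heart of the proof is then to extract from this infinitely ramified region a nondegenerate subcontinuum $K$ that is homogeneously thick, having no local separating point (or at worst countably many), so that $K$ witnesses the failure of the right-hand condition. Constructing $K$ — for instance by a maximality or Baire-category argument that successively deletes the countably many local separating points while preserving infinite local linear measure — is the hard part of the whole proof, since it is precisely here that one must control the \textbf{global} separation structure of subcontinua rather than the purely local finite-boundary data that suffices in the forward direction.
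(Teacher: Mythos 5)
First, note that the paper offers no proof of this statement at all: Theorem \ref{loc sep} is quoted from Whyburn's 1935 paper and used as a black box, so your proposal must stand or fall on its own, and it has genuine gaps in both directions.

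In the forward direction, your reduction to the single claim (P) via heredity of total regularity is fine, and so is the observation that an interior point $p$ of an arc $A$ which fails to be a local separating point must carry simple closed curves of arbitrarily small diameter through it. But the crux — ``a totally regular continuum is Suslinian, so only countably many points of $A$ can carry such loops'' — is false as a deduction: the Suslinian property forbids \emph{uncountable pairwise disjoint} families of nondegenerate subcontinua, and the small loops attached to distinct points of $A$ need not be disjoint (points of $A$ accumulate, so you cannot shrink the loops below a uniform fraction of the mutual distances). A concrete counterexample to your claimed implication: let $X\subseteq\mathbb{R}^2$ be $[0,1]\times\{0\}$ together with, for every dyadic interval $[k2^{-n},(k+1)2^{-n}]$, a semicircular bridge over it. Every nondegenerate subcontinuum of $X$ contains a nondegenerate subarc of the base or of one of the countably many bridges, so $X$ is Suslinian; yet \emph{every} point of the base carries arbitrarily small simple closed curves (a bridge plus the base segment beneath it), and no interior point of the base is a local separating point of $X$. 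So at the decisive step you must invoke finite linear measure (Theorem \ref{finite meas}) quantitatively, not mere Suslinian-ness — and that is precisely the nontrivial content of Whyburn's argument, which your sketch does not supply. (Consistently with the theorem, this $X$ is not totally regular: only vertical cuts at dyadic abscissae have finite boundary, so a countable set of forbidden points defeats every finite-boundary basis.)

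In the converse direction the proposal is not a proof at all: you correctly identify that from the failure of total regularity one must manufacture a nondegenerate subcontinuum with at most countably many local separating points, but you then state that constructing it ``is the hard part'' and offer only the vague suggestion of a maximality or Baire-category argument. No mechanism is given, and the intermediate step is also shaky: a point all of whose neighborhoods have infinite linear measure is not ``equivalently a point of infinite order'' (the bridges example shows total regularity can fail at points that are rim-finite, because the finitely many admissible cut positions can all be blocked by the prescribed countable set). Finally, be aware of an ambiguity you silently resolve: your reduction produces uncountably many local separating points \emph{of the subcontinuum} $K$, which are in general neither implied by nor imply local separating points \emph{of} $X$; whichever reading of the statement is intended, the two versions require an extra argument to reconcile. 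In sum, the forward direction rests on a false lemma and the converse on an unexecuted construction, so the proposal does not establish the theorem.
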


Together with the following Whyburn's theorem, we know that every totally regular continuum has a point of order $2$.

\begin{thm}[\cite{Whyburn}, Theorem 1]\label{finite order pt}
A continuum with uncountable local separating points has a point of order $2$.
\end{thm}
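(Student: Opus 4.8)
The plan is to produce, among the uncountably many local separating points, at least one of order exactly $2$. I would begin by recording the easy half of the order computation: \emph{every} local separating point has order $\ge 2$. Indeed, if $p$ had order $\le 1$, it would possess arbitrarily small neighborhoods whose boundary consists of at most one point; but a single boundary point can never disconnect the component of a neighborhood containing $p$, so $p$ could not locally separate. Hence all points of the uncountable set $L$ of local separating points already have order $\ge 2$, and it suffices to exhibit one point of $L$ whose order is \emph{not} $\ge 3$.

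I would then argue by contradiction, assuming that every point of $L$ has order $\ge 3$. For such a $p$, the failure of $\operatorname{ord}(p)\le 2$ means there is $\delta_p>0$ so that every neighborhood of $p$ of diameter $<\delta_p$ has at least three boundary points. Using the boundary bumping theorem (the component of $p$ in the closure of a small open set meets the boundary), I would extract, inside an arbitrarily small neighborhood of $p$, three subcontinua joining $p$ to three distinct boundary points and meeting pairwise only at $p$; that is, an honest triod $T_p$ with vertex $p$ contained in a prescribed small neighborhood. Thus the assumption furnishes uncountably many ``localizable'' triods.

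The core of the argument is to extract from these an \emph{uncountable pairwise disjoint} family of triods, after which the classical theorem of R.~L.~Moore (in its extension to separable metric continua: a continuum contains at most countably many pairwise disjoint triods) yields the desired contradiction. I would perform the extraction by a transfinite Cantor-scheme selection: pass to the uncountably many condensation points of $L$ (all but countably many points of an uncountable set in a separable space are such), and at each stage pick a branch point, carve a triod inside a neighborhood so small that it avoids the at-that-stage only countably many triods already chosen, then excise a closed neighborhood and continue. The precise role of the hypothesis ``no point of order $2$'' is that it removes the arc-like connective tissue of order-$2$ points along which distinct triods are otherwise forced to overlap (as one sees, for instance, in a comb continuum, where the shared base arc limits disjoint triods to countably many); without such tissue the small triods can be genuinely separated.

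The hard part will be exactly this disjoint-triod extraction. Two points need care: first, manufacturing a bona fide triod (three continua meeting only at $p$) from three boundary points in a space that is not assumed locally connected, where ``branches'' need not be arcs; and second, certifying that the transfinite selection can be continued uncountably often, i.e.\ that at every countable stage a fresh branch point still admits a triod disjoint from all previously selected ones. I expect the clean way to guarantee continuation is to localize near a condensation point of the as-yet-unused branch points and to use the no-order-$2$ hypothesis to keep the newly carved triod inside a small cell disjoint from the finitely-or-countably-many already used; verifying that this never stalls before reaching an uncountable family is the crux on which the whole contradiction rests.
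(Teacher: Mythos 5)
Your opening reduction (local separating points have order at least $2$, so it suffices to find one of order at most $2$) is fine, but the pillar on which your contradiction rests is false: Moore's triod theorem is a theorem about the \emph{plane}, and it does not extend to separable metric continua. A continuum can contain uncountably many pairwise disjoint triods. The product $T\times[0,1]$ of a simple triod $T$ with an arc already does (the slices $T\times\{t\}$), and a more pointed example sits inside the very hypothesis class of this theorem: take the base arc $[0,1]\times\{0\}$ and, at each point $c$ of the Cantor set, attach a Y-shaped tooth meeting the rest of the space only at its foot $(c,0)$. This comb is a ($1$-dimensional) continuum with uncountably many local separating points, and its teeth form an uncountable family of pairwise disjoint triods; your parenthetical claim that the shared base arc limits the comb to countably many disjoint triods is exactly backwards, since each tooth contains only its own foot. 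Consequently, even if your transfinite extraction of uncountably many disjoint triods were carried out flawlessly, the conclusion you would reach is simply true of many continua and contradicts nothing, so the whole strategy collapses. There is also the secondary gap you flagged yourself, and it is real: ${\rm ord}(p)\geq 3$ only says that no sufficiently small neighborhood of $p$ has boundary of cardinality $\leq 2$; the Boundary Bumping Theorem gives you \emph{one} component of $\overline{U}$ meeting $\partial_X(U)$, not three subcontinua meeting pairwise only at $p$, and in a continuum that is not locally connected such a triod with vertex $p$ need not exist at all.

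For comparison: the paper does not prove this statement; it quotes it as Theorem 1 of Whyburn's 1935 paper, and Whyburn's argument runs on an entirely different engine, one that survives in examples like the comb where triod counting is useless. Using a countable basis, one attaches to each local separating point a pair of basic open sets that it separates within a fixed connected set; by the pigeonhole principle, uncountably many points get the same data, yielding an uncountable family $E$ of points each of which separates the same two points $a,b$ in the same connected set. Such points carry a canonical linear order (the separation order between $a$ and $b$), and at any member of $E$ that is a condensation point of $E$ from both sides one can bound arbitrarily small neighborhoods by exactly two nearby members of $E$, producing two-point boundaries and hence a point of order $2$. No disjointness of branch structures is needed anywhere; the uncountability is spent on the pigeonhole and on finding a two-sided condensation point in a linearly ordered set, which is where your approach would have to be rebuilt from scratch.
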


\subsection{Hyperspace and size functions}\label{hyperspace and size function}

Let $(X, d)$ be a compact metric space and set
\begin{eqnarray*}
&&2^{X}=\{A: A \text{ is a nonempty closed subset of } X\},\\
&&C(X)=\{A\in 2^{X}: A \text{ is connected}\}.
\end{eqnarray*}
Then $2^{X}$ is a compact metric space endowed with the Hausdorff metric $d_H$ and $C(X)$ is closed in $2^{X}$ (\cite[Theorems 4.13, 4.17]{Nad1}).
We call each of $(2^{X}, d_H)$ and $(C(X), d_H)$ the {\it hyperspace} of $X$.

\begin{defin}\label{size fun}\cite[4.33]{Nad1}
A continuous function $\tau: 2^{X}\rightarrow \mathbb{R}$ is said to be a \textit{size function} (or \textit{Whitney map}) on $2^{X}$ if
\begin{enumerate}
  \item if $A,B\in 2^{X}$ and $A\subsetneq B$, then $\tau(A)<\tau(B)$;
  \item $\tau(\{x\})=0$, for any $x\in X$.
\end{enumerate}
\end{defin}

\begin{lem}\label{whitney map}\cite[4.33]{Nad1}
For a compact metric space $(X, d)$,
\begin{enumerate}
  \item there exists a size function on $2^X$;
  \item for any size function $\tau$ on $2^X$, $A_n\rightarrow A$ if and only if for any $\varepsilon>0,$ there is $N>0$, such that for each $n\geq N$,
      \begin{displaymath}
      A_n\subseteq N(A,\varepsilon) ~~\text{and  }~ |\tau(A_n)-\tau(A)|<\varepsilon.
      \end{displaymath}
\end{enumerate}
\end{lem}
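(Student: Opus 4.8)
The plan is to treat the two assertions separately: I would construct a size function explicitly for part (1), and then use it together with the compactness of $(2^X,d_H)$ to establish the convergence criterion in part (2).

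For the existence in (1), I would build $\tau$ from the oscillations of a countable family of test functions. Since $X$ is a compact metric space, $C(X,[0,1])$ is separable in the sup norm, so I fix a countable dense subset $\{f_n\}_{n\ge 1}$. For $A\in 2^X$ I set
\[
\mu_n(A)=\max_{x\in A} f_n(x)-\min_{x\in A} f_n(x),
\]
the oscillation of $f_n$ on $A$, and put $\tau(A)=\sum_{n\ge 1} 2^{-n}\mu_n(A)$. Each $\mu_n$ is continuous on $(2^X,d_H)$ because $A\mapsto \max_A f_n$ and $A\mapsto\min_A f_n$ are $d_H$-continuous for continuous $f_n$; since $0\le\mu_n\le 1$, the series converges uniformly and $\tau$ is continuous. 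Clearly $\mu_n(\{x\})=0$, hence $\tau(\{x\})=0$, and $A\subseteq B$ gives $\mu_n(A)\le\mu_n(B)$, whence $\tau(A)\le\tau(B)$. The delicate point is the \emph{strict} inequality: given $A\subsetneq B$, I pick $b\in B\setminus A$ and use Urysohn's lemma to get $f\in C(X,[0,1])$ with $f|_A\equiv 0$ and $f(b)=1$; approximating $f$ within $\varepsilon<1/4$ by some $f_n$ forces $\mu_n(A)<2\varepsilon$ while $\mu_n(B)>1-2\varepsilon$, so $\tau(B)>\tau(A)$.

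For (2), the forward implication is immediate: if $A_n\to A$ then $A_n\subseteq N(A,\varepsilon)$ eventually by the definition of the Hausdorff metric, and $\tau(A_n)\to\tau(A)$ by continuity. The substance lies in the converse. Suppose the two conditions hold for every $\varepsilon$ but $A_n\not\to A$; then some subsequence stays at Hausdorff distance $\ge\delta$ from $A$, and by compactness of $2^X$ a further subsequence converges to some $B$ with $d_H(B,A)\ge\delta$, so $B\ne A$. The containment hypothesis $A_n\subseteq N(A,\varepsilon)$ passes to the limit to give
\[
B\subseteq\bigcap_{\varepsilon>0}\overline{N(A,\varepsilon)}=A,
\]
while the measure hypothesis together with continuity of $\tau$ gives $\tau(B)=\tau(A)$. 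Now the strict monotonicity established in (1) forbids $B\subsetneq A$, forcing $B=A$, a contradiction.

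The main obstacle is the strict-monotonicity step in (1): arranging a single countable family of test functions whose oscillations distinguish \emph{every} pair $A\subsetneq B$ simultaneously. This is precisely the property that does the decisive work in the converse of (2), where the relations $B\subseteq A$ and $\tau(B)=\tau(A)$ can only be reconciled because $\tau$ strictly decreases on proper subcontinua. Everything else reduces to the standard facts that $A\mapsto\max_A f$ is $d_H$-continuous and that $\bigcap_{\varepsilon>0}\overline{N(A,\varepsilon)}=A$ for closed $A$.
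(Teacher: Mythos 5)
Your proof is correct, and it is essentially the approach of the source the paper cites (Nadler 4.33), whose construction the paper recalls explicitly: $\tau(A)=\sum_i 2^{-i}\,\mathrm{diam}\,f_i(A)$ with $f_i(x)=1/(1+d(x_i,x))$ for a countable dense set $\{x_i\}$ is exactly your weighted-oscillation scheme, with your Urysohn-plus-density step for strict monotonicity replaced by choosing $x_i$ close to a point of $B\setminus A$, and part (2) handled by the same hyperspace-compactness argument. The only cosmetic slip is attributing strict monotonicity in (2) to your construction in (1): since (2) concerns an \emph{arbitrary} size function, that property comes directly from condition (1) of Definition \ref{size fun}.
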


{Note that $N(A,\varepsilon)=\{x\in X: d(x, A)<\varepsilon\}$. Here we recall an explicit construction of size functions in \cite[4.33]{Nad1}, which will be used in the sequel. Choose a countable dense subset $D=\{x_1,x_2,x_3,\cdots\}$ of $X$. For each $i$, 
define $f_i: X\rightarrow [0,1]$ by
\[ f_i(x)=\frac{1}{1+d(x_i,x)}.\]
For any subset $A$ of $X$, set $\tau_i(A)=\text{diam}f_i(A)$. Then the function
\[ \tau(A)=\sum_{i=1}^{\infty} \frac{1}{2^{i}} \tau_i(A)\]
is a size function on $2^{X}$.

\begin{lem}\label{size function}
Let $\tau$ be the size function defined above. Then, for any $C,C_1,C_2,\cdots\in C(X)$ with $C\subseteq \bigcup_{k=1}^{\infty} C_k$, we have
\begin{displaymath} \tau(C)\leq \sum_{k=1}^\infty\tau(C_{k}).\end{displaymath}
\end{lem}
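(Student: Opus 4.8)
The plan is to reduce the claimed inequality for $\tau$ to a corresponding inequality for each building block $\tau_i$, and then to recognize that each $\tau_i$ is, in disguise, one-dimensional Lebesgue measure restricted to images of continua. Since $\tau(A)=\sum_{i=1}^\infty 2^{-i}\tau_i(A)$ with all summands nonnegative, it suffices to prove that for every $i$,
\[ \tau_i(C)\le \sum_{k=1}^\infty \tau_i(C_k); \]
multiplying by $2^{-i}$, summing over $i$, and interchanging the two nonnegative sums then yields $\tau(C)\le\sum_k\tau(C_k)$.

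Fix $i$ and write $f=f_i$, a continuous map $X\to[0,1]$. The key observation I would exploit is that, because $C$ is a continuum and $f$ is continuous, the image $f(C)\subseteq\mathbb{R}$ is compact and connected, hence a closed bounded interval $[a,b]$ (degenerating to a single point when $f$ is constant on $C$). Consequently
\[ \tau_i(C)=\operatorname{diam}f(C)=b-a=\lambda\bigl(f(C)\bigr), \]
where $\lambda$ denotes Lebesgue measure on $\mathbb{R}$, and the same identity holds for each $C_k$. It is precisely here that the hypothesis $C,C_1,C_2,\dots\in C(X)$ is essential: for a disconnected set the diameter of its image can far exceed the length of that image, so this identification of $\tau_i$ with $\lambda$ would break down.

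With this identification in hand, the inclusion $C\subseteq\bigcup_{k=1}^\infty C_k$ immediately gives $f(C)\subseteq\bigcup_{k=1}^\infty f(C_k)$, and countable subadditivity of Lebesgue measure finishes the argument:
\[ \tau_i(C)=\lambda\bigl(f(C)\bigr)\le\lambda\Bigl(\bigcup_{k=1}^\infty f(C_k)\Bigr)\le\sum_{k=1}^\infty\lambda\bigl(f(C_k)\bigr)=\sum_{k=1}^\infty\tau_i(C_k). \]
I do not expect a serious obstacle here; the one point requiring a little care is the bookkeeping in the final double sum, where the interchange of $\sum_i$ and $\sum_k$ must be justified by the nonnegativity of all terms (Tonelli for series). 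One could alternatively avoid invoking $\lambda$ altogether by proving the interval inequality $b-a\le\sum_k(b_k-a_k)$ directly through a compactness reduction to a finite subcover, but routing through the standard subadditivity of Lebesgue measure is cleaner and more transparent.
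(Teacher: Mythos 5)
Your proposal is correct and matches the paper's own proof essentially step for step: both reduce to the summands $\tau_i$, use connectedness of $f_i(C)$ to identify $\operatorname{diam} f_i(C)$ with the Lebesgue measure of the interval $f_i(C)$, and conclude by countable subadditivity of $\lambda$. Your added remarks on the interchange of sums (justified by nonnegativity) and on where connectedness is essential are correct refinements of the same argument, left implicit in the paper.
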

\begin{proof}
For each $i$, $f_i$ is continuous on $X$. So, $f_i(C), f_i(C_1),f_i(C_2),\cdots$ are connected in $[0,1]$. Let $\lambda$ be the Lebesgue measure on $[0,1]$. Thus
\begin{eqnarray*}
\tau_i(C)&=&\text{diam}(f_i(C))=\lambda(f_i(C))\\
&\leq& \sum_{k=1}^\infty\lambda(f_i(C_k))= \sum_{k=1}^\infty\text{diam}(f_i(C_k))\\
&=& \sum_{k=1}^\infty\tau_i(C_k).
\end{eqnarray*}
Hence $\tau(C)\leq \sum_{k=1}^\infty\tau(C_{k})$.
\end{proof}

\subsection{Metric outer measures}
For a nonempty set $\Omega$ let $\mathcal{P}(\Omega)$ denote the power set of $\Omega$, i.e., $\mathcal{P}(\Omega)=\{A: A\subseteq \Omega\}$.
Recall that a function $\mu$ defined on $\mathcal{P}(\Omega)$ is called an \textit{outer-measure} on $\Omega$ if it satisfies:
\begin{enumerate}
  \item $0\leq \mu(E)\leq +\infty$, for each subset $E$ of $\Omega$;
  \item $\mu(\emptyset)=0$;
  \item if $E_1\subseteq E_2$, then $\mu(E_1)\leq \mu(E_2)$;
  \item if $\{E_i\}$ is any sequence of subsets of $\Omega$, then
  \begin{displaymath} \mu\left(\bigcup_{i=1}^\infty E_i\right)\leq \sum_{i=1}^{\infty} \mu(E_i).\end{displaymath}
\end{enumerate}
Let $\mu$ be an outer-measure on $\Omega$. A subset $E$ of $\Omega$ is said to be \textit{$\mu$-measurable} if for all subsets $A,B$ with $A\subseteq E$ and $B\subseteq \Omega\setminus E$, we have
\begin{displaymath} \mu(A\cup B)=\mu(A)+\mu(B).\end{displaymath}

\begin{thm}\cite[Theorem 3]{Rogers}
If $\mu$ is an outer-measure on $\Omega$, then the system $\mathcal{M}$ of $\mu$-measurable sets is a $\sigma$-algebra and the restriction of $\mu$ to $\mathcal{M}$ is a measure on $\mathcal{M}$.
\end{thm}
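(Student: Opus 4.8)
The plan is to carry out the classical Carathéodory argument, adapted to the splitting form of measurability used above. Throughout, the only properties of $\mu$ available are monotonicity and countable subadditivity; in particular, for any $A,B$ the inequality $\mu(A\cup B)\le\mu(A)+\mu(B)$ holds automatically, so a set $E$ lies in $\mathcal{M}$ precisely when the reverse inequality $\mu(A\cup B)\ge\mu(A)+\mu(B)$ holds for all $A\subseteq E$ and $B\subseteq\Omega\setminus E$. First I would record the equivalence of this definition with Carath\'eodory's usual \emph{split condition}: $E\in\mathcal{M}$ if and only if $\mu(T)\ge\mu(T\cap E)+\mu(T\setminus E)$ for every $T\subseteq\Omega$. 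Indeed, taking $A=T\cap E$ and $B=T\setminus E$ turns the split condition into the defining one, while conversely, given $A\subseteq E$ and $B\subseteq\Omega\setminus E$, the choice $T=A\cup B$ satisfies $T\cap E=A$ and $T\setminus E=B$. This reformulation is the form in which the combinatorics is cleanest.

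Next I would show that $\mathcal{M}$ is an algebra. The sets $\emptyset$ and $\Omega$ lie in $\mathcal{M}$ trivially, and since the defining condition is symmetric under interchanging the roles of $A$ and $B$ (equivalently of $E$ and $\Omega\setminus E$), $\mathcal{M}$ is closed under complementation. The one substantive point is closure under finite unions. For $E_1,E_2\in\mathcal{M}$ and any test set $T$, I would apply the split condition for $E_1$ to $T$, then the split condition for $E_2$ to the piece $T\setminus E_1$, obtaining
\[ \mu(T)=\mu(T\cap E_1)+\mu\big((T\setminus E_1)\cap E_2\big)+\mu\big(T\setminus(E_1\cup E_2)\big). \]
Since $T\cap(E_1\cup E_2)=(T\cap E_1)\cup\big((T\setminus E_1)\cap E_2\big)$, subadditivity bounds the sum of the first two terms below by $\mu(T\cap(E_1\cup E_2))$, and the split condition for $E_1\cup E_2$ follows. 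By induction $\mathcal{M}$ is closed under all finite unions and intersections.

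To upgrade to a $\sigma$-algebra and obtain countable additivity simultaneously, I would first note finite additivity on $\mathcal{M}$: if $E_1,\dots,E_n\in\mathcal{M}$ are pairwise disjoint, repeatedly splitting $T\cap\bigcup_i E_i$ along $E_1,\dots,E_n$ gives $\mu\big(T\cap\bigcup_{i=1}^n E_i\big)=\sum_{i=1}^n\mu(T\cap E_i)$. Given a countable family in $\mathcal{M}$, I would replace it by the disjointified family $F_n=E_n\setminus\bigcup_{i<n}E_i$, which lies in $\mathcal{M}$ by the algebra property and has the same union $E$. Writing $S_n=\bigcup_{i\le n}F_i\in\mathcal{M}$ and using monotonicity in the form $\mu(T\setminus S_n)\ge\mu(T\setminus E)$, the split condition for $S_n$ yields $\mu(T)\ge\sum_{i=1}^n\mu(T\cap F_i)+\mu(T\setminus E)$ for every $n$. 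Letting $n\to\infty$ and then invoking countable subadditivity to recombine $\sum_i\mu(T\cap F_i)\ge\mu(T\cap E)$ gives the split condition for $E$, so $E\in\mathcal{M}$ and $\mathcal{M}$ is a $\sigma$-algebra. Specializing this same chain of inequalities to $T=E$ yields $\mu(E)\ge\sum_i\mu(F_i)$, which with subadditivity proves countable additivity of $\mu$ on $\mathcal{M}$; hence the restriction of $\mu$ to $\mathcal{M}$ is a measure.

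I expect the main obstacle to be the closure-under-union computation together with the limiting step: one must be scrupulous about invoking only monotonicity and subadditivity (not additivity, which is exactly what is being established) until each stage is secured, and about the order in which the split conditions are applied so that the telescoping of the partial unions $S_n$ interacts correctly with the passage $n\to\infty$.
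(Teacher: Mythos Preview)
Your argument is the standard Carath\'eodory construction and is correct; the equivalence with the split condition, closure under finite unions via iterated splitting, the disjointification, and the passage to the limit are all carried out carefully and use only monotonicity and countable subadditivity, as required.

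There is nothing to compare: the paper does not supply a proof of this statement. It is quoted as a known result from Rogers' \emph{Hausdorff measures} (the citation \cite[Theorem 3]{Rogers} appears in the theorem heading), and the paper moves on immediately to the definition of a pre-measure. Your write-up is therefore not an alternative to the paper's argument but rather a self-contained justification of a result the authors take for granted.
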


A function $\tau$ defined on a class $\mathcal{C}$ of subsets of $\Omega$ will be called a\textit{ pre-measure} if
\begin{enumerate}
  \item $\emptyset\in\mathcal{C}$ and $\tau(\emptyset)=0$;
  \item $0\leq \tau(C)\leq +\infty$  for all $C$ in $\mathcal{C}$.
\end{enumerate}


\begin{thm}\cite[Theorems 15,16,19]{Rogers}\label{metric outermeas}
If $\tau$ is a pre-measure defined on a class $\mathcal{C}$ of subsets in a metric space $(X,d)$, then the set function
\begin{equation}
\mu(E)=\sup_{\delta>0}\mu_{\delta}(E)
\end{equation}
is an outer-measure on $X$, where
\begin{displaymath}
\mu_{\delta}(E)=\inf\left\{\sum_{i=1}^{\infty}\tau(C_i):~C_i\in\mathcal{C}, \rm{diam}(C_i)\leq \delta, E\subseteq\bigcup_{i=1}^\infty C_i\right\}.
\end{displaymath}
(We let $\mu_{\delta}(E)=+\infty$ if the infimum is taken over the empty set.) Moreover, all Borel sets are $\mu$-measurable.
\end{thm}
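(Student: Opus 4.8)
The plan is to verify the two assertions in turn: first that $\mu$ is an outer-measure, and then that it is in fact a \emph{metric} outer-measure, from which the measurability of all Borel sets will follow by a standard approximation argument. I would begin by recording the behaviour of the auxiliary set functions $\mu_\delta$. Since $\emptyset\in\mathcal C$ and $\tau(\emptyset)=0$, one has $\mu_\delta(\emptyset)=0$; monotonicity of $\mu_\delta$ is immediate because every admissible cover of the larger set also covers the smaller one; and countable subadditivity of $\mu_\delta$ follows from the usual $\varepsilon/2^i$ device, concatenating near-optimal $\delta$-covers of the individual pieces. The key structural observation is that shrinking $\delta$ only restricts the family of admissible covers, so $\delta\mapsto\mu_\delta(E)$ is non-increasing and hence $\mu(E)=\sup_{\delta>0}\mu_\delta(E)=\lim_{\delta\to 0^+}\mu_\delta(E)$. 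The outer-measure axioms for $\mu$ then transfer from those for $\mu_\delta$: nonnegativity, $\mu(\emptyset)=0$, and monotonicity are inherited at once, while for countable subadditivity I would fix $\delta$, use $\mu_\delta(\bigcup_i E_i)\le\sum_i\mu_\delta(E_i)\le\sum_i\mu(E_i)$, and then pass to the supremum over $\delta$.

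The heart of the argument is the metric property: if $A,B\subseteq X$ satisfy $d(A,B):=\inf_{a\in A,\,b\in B}d(a,b)>0$, then $\mu(A\cup B)=\mu(A)+\mu(B)$. The inequality $\le$ is just subadditivity, so only $\ge$ requires work (and we may assume $\mu(A\cup B)<\infty$, else equality is automatic from subadditivity). Fix $\delta$ with $0<\delta<d(A,B)$ and choose a cover $\{C_i\}\subseteq\mathcal C$ of $A\cup B$ with $\operatorname{diam}(C_i)\le\delta$ and $\sum_i\tau(C_i)\le\mu_\delta(A\cup B)+\varepsilon$. Because each $C_i$ has diameter strictly smaller than the gap $d(A,B)$, no single $C_i$ can meet both $A$ and $B$; partitioning the indices according to whether $C_i$ meets $A$ or meets $B$ (and discarding those meeting neither) produces separate $\delta$-covers of $A$ and of $B$, whence $\mu_\delta(A)+\mu_\delta(B)\le\sum_i\tau(C_i)\le\mu_\delta(A\cup B)+\varepsilon$. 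Letting $\varepsilon\to0$ and then $\delta\to0^+$, which is legitimate precisely because $d(A,B)>0$ leaves a whole interval of admissible $\delta$, yields $\mu(A)+\mu(B)\le\mu(A\cup B)$.

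To deduce that every Borel set is $\mu$-measurable it suffices, since the $\mu$-measurable sets already form a $\sigma$-algebra by the cited result of Rogers, to prove that every closed set $F$ is measurable; that is, $\mu(A\cup B)=\mu(A)+\mu(B)$ whenever $A\subseteq F$ and $B\subseteq X\setminus F$. The obstacle is that $d(A,B)$ may vanish, so the metric property does not apply directly. I would circumvent this by setting $B_n=\{x\in B:d(x,F)\ge 1/n\}$, so that $d(A,B_n)\ge d(F,B_n)\ge 1/n>0$ and hence $\mu(A)+\mu(B_n)=\mu(A\cup B_n)\le\mu(A\cup B)$ by the metric property. Since $F$ is closed and $B\cap F=\emptyset$, every point of $B$ has positive distance to $F$, so $B_n\uparrow B$. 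The remaining and genuinely delicate point, which I expect to be the main obstacle, is the continuity $\mu(B_n)\to\mu(B)$; I would obtain this from the increasing-sets lemma for metric outer-measures. Writing $D_k=B_{k+1}\setminus B_k$, a triangle-inequality estimate gives $d(B_k,\,B\setminus B_{k+1})\ge 1/(k(k+1))>0$, so that the differences $D_k$ whose indices differ by at least $2$ are separated by positive gaps; applying the metric property along the two parity classes forces $\sum_k\mu(D_k)<\infty$ (assuming $\lim_n\mu(B_n)<\infty$, the other case being trivial), and then $\mu(B)\le\mu(B_n)+\sum_{k\ge n}\mu(D_k)$ gives $\mu(B)\le\lim_n\mu(B_n)$, the reverse inequality being monotonicity. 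Combining, $\mu(A)+\mu(B)=\lim_n\bigl(\mu(A)+\mu(B_n)\bigr)\le\mu(A\cup B)$, and together with subadditivity this yields equality, so $F$ is measurable and the proof is complete.
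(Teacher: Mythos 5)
Your proof is correct: the paper does not prove this statement but imports it verbatim from Rogers (Theorems 15, 16, 19), and your argument --- the Method II construction yields an outer measure since $\mu_\delta$ increases as $\delta\to 0^+$, positive separation of $A$ and $B$ gives the metric property by splitting a fine cover, and closed sets (hence all Borel sets, via the cited $\sigma$-algebra result) are handled by the increasing-sets lemma with the parity-class separation estimate $d(B_k, B\setminus B_{k+1})\geq 1/(k(k+1))$ --- is precisely the standard proof given in that source. The only points left implicit (padding the subfamilies with $\emptyset\in\mathcal{C}$ so they remain admissible countable covers, and the trivial cases where some $\mu_\delta$ or $\lim_n\mu(B_n)$ is infinite) are routine and dismissed correctly as you indicate.
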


\subsection{Equicontinuity and sensitivity in minimal systems}

Let $G$ be a countable group and $X$ be a compact metric space with metric $d$.
 A {\it continuous action} of $G$ on $X$, written $G\curvearrowright X$, means
a group homomorphism $\phi:G\rightarrow {\rm{Homeo}}(X),$ where ${\rm{Homeo}}(X)$ is the homeomorphism group of $X$.
For brevity, we usually use $gx$ instead of $\phi(g)(x)$ for $g\in G$ and $x\in X$. For $A\subset X$ and $g\in G$, denote by $gA$ the set $\{gx: x\in A\}$.
We use $GA$ to denote the set $\cup_{g\in G}=gA$; if $GA=A$, then $A$ is called {\it $G$-invariant}.
For $x\in X$, the {\it orbit} of $x$ under the action $G\curvearrowright X$ is the set $\{gx: g\in G\}$, which is denoted by $O(x, G)$.
If for every $x\in X$, $O(x, G)$ is dense in $X$, then the action $G\curvearrowright X$ is said to be {\it minimal}. If $A\subset X$ is closed and $G$-invariant, and
the restriction action $G\curvearrowright A$ is minimal, then we call $A$ a {\it minimal set}. If $\phi(G)$
is an equicontinuous family in ${\rm{Homeo}}(X)$ with respect to the uniform convergence topology, then $G\curvearrowright X$ is
said to be {\it equicontinuous}; that is, for every $\epsilon>0$ there is a $\delta>0$ such that $d(gx, gy)<\epsilon$ for all $g\in G$, whenever
$d(x, y)<\delta$. The action $G\curvearrowright X$ is said to be {\it sensitive} if there is $c>0$ such that for every nonempty open set
$U$ in $X$, there is a $g\in G$ with ${\rm diam}(gU)>c$; $c$ is sad to be a {\it sensitivity constant}.

The following dichotomy is well known in the theory of dynamical system and easy to prove.

\begin{thm}\label{dichotomy}
Let $G$ be a group and $X$ be a compact metric space. Suppose the action $G\curvearrowright X$ is minimal. Then
$G\curvearrowright X$ is either equicontinuous or sensitive.
\end{thm}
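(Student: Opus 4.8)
The plan is to establish the dichotomy in its contrapositive form: assuming the action $G\curvearrowright X$ is \emph{not} sensitive, I would show it is equicontinuous. (That the two alternatives are in fact mutually exclusive is an easy separate check and is not needed for the stated ``either/or'', so I would not dwell on it.) The first step is simply to unwind the negation of sensitivity. By definition, $G\curvearrowright X$ fails to be sensitive precisely when, for every $c>0$, there exists a nonempty open set $U\subseteq X$ such that $\mathrm{diam}(gU)\leq c$ for \emph{all} $g\in G$. This single open set $U$, whose entire $G$-orbit stays of diameter at most $c$, is the crucial object produced by the hypothesis.

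Next I would exploit minimality to propagate the smallness of $U$ over all of $X$. Since the action is minimal, every orbit is dense and hence meets the open set $U$; thus for each $x\in X$ there is some $g\in G$ with $gx\in U$, that is, $x\in g^{-1}U$. Consequently $\{g^{-1}U:g\in G\}$ is an open cover of $X$, and by compactness I extract a finite subcover $X=\bigcup_{i=1}^{n}g_i^{-1}U$. Let $\delta>0$ be a Lebesgue number for this finite cover.

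Finally I would verify equicontinuity directly with this $\delta$. Given $x,y$ with $d(x,y)<\delta$, the two-point set $\{x,y\}$ has diameter less than $\delta$, so it lies in some member $g_i^{-1}U$ of the cover, giving $g_ix,g_iy\in U$. For an arbitrary $g\in G$, write $h=gg_i^{-1}$, so that $gx=h(g_ix)$ and $gy=h(g_iy)$ with both points lying in $hU$; since $\mathrm{diam}(hU)\leq c$ by the defining property of $U$, it follows that $d(gx,gy)\leq c$. As $g$ was arbitrary and $c>0$ could be prescribed at the very start, this is exactly the assertion that $G\curvearrowright X$ is equicontinuous.

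The argument is essentially routine, and the only point deserving care is the uniformity over $g$. The diameter hypothesis on $U$ controls only the single orbit $\{gU:g\in G\}$, whereas equicontinuity demands a bound on $d(gx,gy)$ for all pairs close together and all $g$ simultaneously; the device of rewriting $g=(gg_i^{-1})g_i$ is precisely what converts the former into the latter, and it is here—through the finite subcover and its Lebesgue number supplied by minimality and compactness—that the essential work is done.
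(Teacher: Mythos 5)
Your argument is correct: unwinding the negation of sensitivity to get a single open set $U$ with $\mathrm{diam}(gU)\leq c$ for all $g\in G$, using minimality and compactness to extract a finite cover $X=\bigcup_{i=1}^{n}g_i^{-1}U$ with Lebesgue number $\delta$, and then factoring an arbitrary $g$ as $(gg_i^{-1})g_i$ is precisely the standard Auslander--Yorke proof of this dichotomy, and the paper itself gives no proof at all (it states the theorem as ``well known \dots and easy to prove''), so your write-up simply supplies the omitted argument. The only cosmetic point is quantifier bookkeeping against the paper's definition of equicontinuity, which demands the strict bound $d(gx,gy)<\epsilon$: since your conclusion reads $d(gx,gy)\leq c$, one should begin with $c=\epsilon/2$, exactly as your closing remark that ``$c$ could be prescribed at the very start'' already anticipates.
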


Let $H$ be a compact metric topological group and $K$ be a closed subgroup of $H$. Then $\psi:H\rightarrow {\rm{Homeo}}(H/K)$ define by
$\psi(h)(gK)=hgK$ is a continuous action of $H$ on $H/K$, which is called the {\it left translation action} of $H$ on $H/K$. We use
$L(H/K)$ to denote the subgroup $\psi(H)$ of ${{\rm Homeo}} (H/K)$.

The following theorem is classical and can be seen in \cite{Auslander}.

\begin{thm}\label{euqicontinuous}
Let $G$ be a group and $X$ be a compact metric space. Suppose the action $\phi: G\rightarrow {\rm Homeo}(X)$ is minimal and equicontinuous. Then
there is a compact metric topological group $H$ and a closed subgroup $K$ of $H$ such that $\phi$ is topologically conjugate to left translations
on $H/K$; that is, there is a homeomorphism $h:X\rightarrow H/K$ and a group homomorphism $\gamma: G\rightarrow L(H/K)$ such that
$h(\phi(g)(x))=\gamma(g)(h(x))$ for all $g\in G$ and $x\in X$; in particular, $X$ is topologically homogenous.
\end{thm}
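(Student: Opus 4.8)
The plan is to realize the action inside the isometry group of a well-chosen metric and then exploit minimality. First I would replace $d$ by a $G$-invariant compatible metric. Define
\[
\rho(x,y)=\sup_{g\in G}d(gx,gy).
\]
Since $X$ is compact, $d$ is bounded, so $\rho$ is finite; clearly $\rho(x,y)\ge d(x,y)$, and equicontinuity gives that for every $\varepsilon>0$ there is $\delta>0$ with $d(x,y)<\delta\Rightarrow\rho(x,y)\le\varepsilon$. Hence $\rho$ is a metric uniformly equivalent to $d$, inducing the same topology, while $\rho(gx,gy)=\rho(x,y)$ for all $g\in G$, i.e.\ each $\phi(g)$ is a surjective $\rho$-isometry.

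Next I would pass to the isometry group. Let $\mathrm{Iso}(X)$ be the group of surjective isometries of $(X,\rho)$, with the sup-metric $D(f,f')=\sup_{x}\rho(f(x),f'(x))$, and set $H=\overline{\phi(G)}$, the closure of $\phi(G)$ in $\mathrm{Iso}(X)$. I claim $H$ is a compact topological group. Isometries form an equicontinuous family, so by the Arzel\`a--Ascoli theorem $\mathrm{Iso}(X)$ is relatively compact in $C(X,X)$; any uniform limit of isometries is distance preserving and, being a distance-preserving self-map of a compact space, is automatically surjective, so $\mathrm{Iso}(X)$ is closed and hence compact. Composition is jointly continuous on equicontinuous families, and the identity $D(f^{-1},(f')^{-1})=D(f,f')$ (obtained by applying the isometry $f'$ to both arguments and substituting $x=f(y)$) shows inversion is continuous. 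Thus $\mathrm{Iso}(X)$, and therefore its closed subgroup $H$, is a compact metric topological group.

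Then I would bring in minimality. Fix a basepoint $x_0\in X$ and consider the orbit map $\theta:H\to X$, $\theta(a)=a(x_0)$, which is continuous. Its image contains $\phi(G)(x_0)=O(x_0,G)$, which is dense by minimality, and is compact hence closed, so $\theta$ is onto. Let $K=\{a\in H:a(x_0)=x_0\}$ be the closed stabilizer; then $\theta$ is constant on left cosets and descends to a continuous bijection $H/K\to X$, which is a homeomorphism since $H/K$ is compact and $X$ is Hausdorff. Writing $h:X\to H/K$ for its inverse and $\gamma:G\to L(H/K)$ for $\gamma(g)=\psi(\phi(g))$, where $\psi$ is the left translation action, a direct check gives the conjugacy: for $x=a(x_0)$ we have $h(x)=aK$, so $\gamma(g)(h(x))=\phi(g)aK$, and this coset is sent by $\theta$ to $(\phi(g)a)(x_0)=\phi(g)(x)$, i.e.\ $h(\phi(g)(x))=\gamma(g)(h(x))$. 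Transitivity of the left translations on $H/K$ yields the topological homogeneity of $X$.

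The main obstacle is the verification that $H$ is genuinely a topological group rather than merely a compact semigroup: one must confirm both that uniform limits of the maps $\phi(g)$ remain surjective isometries (so that $H\subset\mathrm{Iso}(X)$ is closed under inverses) and that inversion is continuous. The invariant metric $\rho$ is precisely what makes these points transparent; without it, equicontinuity alone would force the more delicate enveloping-semigroup argument that the Ellis semigroup of an equicontinuous system consists of homeomorphisms and forms a group. Everything after the construction of $H$ is the standard orbit-space identification and is routine.
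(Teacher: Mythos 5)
Your proof is correct and complete: the invariant metric $\rho(x,y)=\sup_{g\in G}d(gx,gy)$ is well defined and $G$-invariant by equicontinuity, the Arzel\`a--Ascoli argument together with the facts that a distance-preserving self-map of a compact metric space is surjective and that $D(f^{-1},(f')^{-1})=D(f,f')$ does make $H=\overline{\phi(G)}\subset\mathrm{Iso}(X,\rho)$ a compact metric topological group, and the orbit-map identification $H/K\cong X$ uses minimality exactly where needed. The paper gives no proof of this statement, citing it as classical from Auslander's book, and your argument is essentially that standard classical proof, so there is nothing to flag.
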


\subsection{Contractible neighborhoods and strong $\epsilon$-proximality} We will recall some notions used by Margulis in \cite{Margulis}, some ideas
of which are due to Furstenberg. Let a group $G$ act on a compact metric space $(X, d)$ and let ${\mathcal M}(X)$ denote the space of all Borel
probability measures on $X$ with the standard weak* topology. A subset $K$ of $X$ is said to be {\it $G$-contractible} if there is a sequence
$(g_n)$ in $G$ such that ${\rm diam}(g_nK)\rightarrow 0$. We call a measure $\mu\in {\mathcal M}(X)$ {\it $G$-contractible} if there is a sequence
$(g_n)$ in $G$ and $x\in X$ such that $g_n\mu\rightarrow \delta_x$, where $\delta_x$ is the Dirac measure at $x$. We say that the $G$-action on $X$ is
{\it strongly $\epsilon$-proximal} if every measure $\mu\in {\mathcal M}(X)$ with ${\rm diam(supp}(\mu))<\epsilon$ is $G$-contractible.

The following lemma is implied by an argument of Lebesgue number.

\begin{lem}\label{contractible}
If every point $x\in X$ has a $G$-contractible neighborhood, then the $G$-action on $X$ is strongly $\epsilon$-proximal for some $\epsilon>0$.
\end{lem}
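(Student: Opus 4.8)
The plan is to exploit compactness of $X$ twice: once to extract a Lebesgue number from the hypothesis, and once to pin down the point toward which a contracted measure concentrates. The engine is the remark, flagged in the statement, that the conclusion follows from a Lebesgue-number argument.

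First I would record that $G$-contractibility passes to subsets: if $(g_n)$ witnesses the contractibility of $K$, so that $\mathrm{diam}(g_nK)\to0$, then for any $K'\subseteq K$ we have $\mathrm{diam}(g_nK')\le\mathrm{diam}(g_nK)\to0$, so the same sequence witnesses contractibility of $K'$. Consequently each point $x$ admits an \emph{open} $G$-contractible neighborhood $U_x$, obtained by shrinking the given neighborhood to an open subset containing $x$. The family $\{U_x:x\in X\}$ is then an open cover of the compact metric space $X$, so it has a Lebesgue number $\epsilon>0$: every subset of $X$ of diameter less than $\epsilon$ is contained in some $U_x$. This $\epsilon$ will be the sensitivity-type constant claimed in the statement.

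Next, I would take an arbitrary $\mu\in\mathcal{M}(X)$ with $\mathrm{diam}(\mathrm{supp}(\mu))<\epsilon$. By the Lebesgue property, $\mathrm{supp}(\mu)\subseteq U_x$ for some $x$; fixing a sequence $(g_n)$ in $G$ with $\mathrm{diam}(g_nU_x)\to0$ yields $\mathrm{supp}(g_n\mu)=g_n\,\mathrm{supp}(\mu)\subseteq g_nU_x$, a set whose diameter tends to $0$.

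The only genuine, if modest, obstacle is upgrading ``supports shrinking to diameter zero'' to actual weak* convergence to a Dirac measure, and I expect this to be where the care is needed. I would choose points $z_n\in g_n\,\mathrm{supp}(\mu)$ and, by compactness of $X$, pass to a subsequence with $z_n\to z$. Then $\mathrm{supp}(g_n\mu)$ lies in the ball about $z$ of radius $\mathrm{diam}(g_nU_x)+d(z_n,z)$, which tends to $0$. For any $f\in C(X)$, uniform continuity of $f$ gives $\bigl|\int f\,d(g_n\mu)-f(z)\bigr|\le\sup\{|f(w)-f(z)|:w\in\mathrm{supp}(g_n\mu)\}\to0$, so $g_n\mu\to\delta_z$ along this subsequence. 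Hence $\mu$ is $G$-contractible, and since $\mu$ was an arbitrary measure subject only to the diameter bound, the $G$-action on $X$ is strongly $\epsilon$-proximal.
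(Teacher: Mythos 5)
Your proof is correct and is exactly the argument the paper has in mind: the paper gives no written proof, stating only that the lemma ``is implied by an argument of Lebesgue number,'' and your write-up supplies precisely that argument (Lebesgue number for the cover by open contractible neighborhoods, then pushing the support into a single cover element) together with the routine upgrade from shrinking supports to weak* convergence $g_n\mu\to\delta_z$ via a subsequence. No gaps; the observation that contractibility passes to subsets and that $\mathrm{supp}(g_n\mu)=g_n\,\mathrm{supp}(\mu)$ are the only details needed, and you handle both.
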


The following lemma is Proposition 1(ii) in \cite{Margulis}.

\begin{lem}\label{finite support}
If the $G$-action on $X$ is strongly  $\epsilon$-proximal for some $\epsilon>0$, then for any $\mu\in {\mathcal M}(X)$, there is
a $\nu\in {\mathcal M}(X)$ with finite support and a sequence $(g_n)$ in $G$ such that $g_n\mu\rightarrow \nu$.
\end{lem}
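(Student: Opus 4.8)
The plan is to study the $G$-action induced on the compact metric space $\mathcal{M}(X)$ and to locate a finitely supported measure inside the weak* orbit closure $\overline{G\mu}$. First I would note that $\overline{G\mu}$ is a compact $G$-invariant subset of $\mathcal{M}(X)$, so by Zorn's lemma it contains a minimal closed $G$-invariant subset $\mathcal{M}_0$; since $\overline{G\nu}=\mathcal{M}_0$ for every $\nu\in\mathcal{M}_0$, it suffices to prove that some (equivalently every) $\nu\in\mathcal{M}_0$ has finite support, because $\nu\in\overline{G\mu}$ then yields a sequence $(g_n)$ with $g_n\mu\to\nu$. The engine of the argument is the family of functionals $a_j:\mathcal{M}(X)\to[0,1]$ given by $a_j(\lambda)=\sup\{\lambda(S): S\subseteq X,\ |S|\leq j\}$, the total mass carried by the $j$ largest atoms of $\lambda$. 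Using the Portmanteau theorem one checks that each $a_j$ is upper semicontinuous for the weak* topology (if $j$ points carry mass $\geq c$ for $\lambda_n$ and $\lambda_n\to\lambda$, then passing to a convergent subsequence of the witnessing point sets, the limit points carry mass $\geq c$ for $\lambda$), and each $a_j$ is $G$-invariant because a homeomorphism preserves atoms and their masses.

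Next I would exploit minimality. For an upper semicontinuous $G$-invariant function on the minimal set $\mathcal{M}_0$, the superlevel set at its maximum is closed, nonempty and $G$-invariant, hence equal to $\mathcal{M}_0$; therefore each $a_j$ is constant on $\mathcal{M}_0$, say $a_j\equiv s_j$. In particular every measure in $\mathcal{M}_0$ has the same nonincreasing sequence of atom masses, and the total atomic mass $a=\lim_j a_j$ is constant, equal to $s_\infty=\sup_j s_j$. To see $s_\infty=1$, cover $X$ by finitely many sets $A_1,\dots,A_m$ of diameter $<\epsilon$. If some $\nu\in\mathcal{M}_0$ had atomic mass $s_\infty<1$, then the nonatomic part would put mass $\geq(1-s_\infty)/m$ on some $A_i$; contracting the normalized restriction $\nu|_{A_i}/\nu(A_i)$, whose support has diameter $<\epsilon$ and hence is $G$-contractible by strong $\epsilon$-proximality, concentrates all of $\nu(A_i)$ onto a single point while the atoms outside $A_i$ persist as atoms along a suitable subsequence. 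The resulting measure lies in $\mathcal{M}_0$ and has atomic mass $\geq s_\infty+(1-s_\infty)/m>s_\infty$, contradicting the constancy of $a$. Hence $s_\infty=1$ and every $\nu\in\mathcal{M}_0$ is purely atomic.

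It remains to bound the number of atoms, and this is the step I expect to be the real obstacle, since pure atomicity alone permits infinitely many atoms summing to $1$. Suppose $\nu\in\mathcal{M}_0$ had infinitely many atoms; being infinitely many points in a compact space they accumulate, so two distinct atoms $p\neq p'$ lie within distance $<\epsilon$. Contracting the normalized restriction of $\nu$ to $\{p,p'\}$ merges these two atoms of masses $c,c'>0$ into a single atom of mass $c+c'$ (with the remaining atoms persisting, possibly merging further), producing some $\nu'\in\mathcal{M}_0$. Merging two positive atoms into one strictly increases $a_j$ for some $j$, because the sorted mass sequence becomes strictly more concentrated while the total is preserved; thus $a_j(\nu')>a_j(\nu)=s_j$, contradicting $s_j=\max_{\mathcal{M}_0}a_j$. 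Therefore every $\nu\in\mathcal{M}_0$ has finitely many atoms, i.e. finite support, which proves the lemma.

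The delicate points to treat carefully are the behavior of the non-contracted atoms under the contracting sequences in the last two paragraphs: one must pass to a subsequence along which all the relevant atoms converge, and verify that merging only ever increases the quantities $a_j$, so that the contradictions with the constancy of $a$ and with the maximality of $s_j$ are genuine. The availability of two distinct atoms within distance $<\epsilon$ is guaranteed by the accumulation of infinitely many points in the compact space $X$.
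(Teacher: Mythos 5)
Your argument is correct, but there is nothing in the paper to compare it against line by line: the authors do not prove this lemma at all, they quote it as Proposition 1(ii) of \cite{Margulis}. Your proposal is therefore a self-contained replacement, and it is worth recording how it differs from Margulis' route. The engine is the same --- cover $X$ by finitely many sets of diameter $<\epsilon$, use strong $\epsilon$-proximality to contract a normalized piece of the measure, and extract weak* limits --- but Margulis organizes the extraction around an extremal (maximizing) argument for a concentration quantity on the orbit closure, whereas you pass to a minimal closed $G$-invariant subset $\mathcal{M}_0\subseteq\overline{G\mu}$ and exploit the fact that a $G$-invariant upper semicontinuous function on a minimal system is constant (its superlevel set at the maximum is nonempty, closed and invariant). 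Applying this to the whole family $a_j$ yields a clean two-stage scheme: constancy of $a=\sup_j a_j$ forces purity ($s_\infty=1$) via the covering argument, and constancy of an individual $a_j$ rules out infinitely many atoms via the merging step. The delicate points you flag do all go through: upper semicontinuity of $a_j$ follows from the portmanteau inequality applied to the closed sets $F_\delta=\bigcup_i \overline{B}(x_i,\delta)$ and letting $\delta\downarrow 0$; atom persistence is obtained by a diagonal extraction making $g_np_k$ converge for every atom $p_k$, after which the same shrinking-neighborhood computation shows each finite set of limit points carries at least the corresponding mass --- and since these lower bounds already sum to $1$, the atoms of the limit measure are exactly the block sums, with nothing lost or created; finally, the strict increase of some $a_j$ under a genuine merge can be nailed down by noting that $a_j(\nu')\geq a_j(\nu)$ for every $j$ (the top $j$ atoms of $\nu$ meet at most $j$ blocks), while the sorted mass sequences cannot coincide because merging two positive atoms strictly increases $\sum_k c_k^2$, so the first index at which they differ gives strict inequality. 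Two small points of hygiene: take the $A_i$ to be balls of radius $<\epsilon/2$, so that $\operatorname{supp}(\nu|_{A_i})\subseteq\overline{A_i}$ indeed has diameter $<\epsilon$; and note that your proof is valid on an arbitrary compact metric space, which is exactly the generality in which the present paper invokes the lemma, whereas Margulis' note is written in the setting of the circle.
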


From Lemma \ref{contractible} and Lemma \ref{finite support}, we immediately have

\begin{prop}\label{con fin}
If every point $x\in X$ has a $G$-contractible neighborhood, then for any $\mu\in {\mathcal M}(X)$, there is
a $\nu\in {\mathcal M}(X)$ with finite support and a sequence $(g_n)$ in $G$ such that $g_n\mu\rightarrow \nu$.
\end{prop}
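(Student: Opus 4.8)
The plan is to simply compose the two preceding lemmas, since all the substantive work has already been isolated there. Concretely, I would first invoke Lemma \ref{contractible}: its hypothesis is exactly the hypothesis of the proposition, namely that every point $x\in X$ possesses a $G$-contractible neighborhood, so its conclusion produces a single $\epsilon>0$ for which the $G$-action on $X$ is strongly $\epsilon$-proximal. The quantifier order matters here — we need \emph{one} $\epsilon$ that works uniformly, not an $\epsilon$ depending on the measure $\mu$ we will later be handed — and this is precisely what Lemma \ref{contractible} supplies via its Lebesgue-number argument.

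Having fixed such an $\epsilon$, the second step is to feed it into Lemma \ref{finite support}. That lemma takes as input the strong $\epsilon$-proximality of the action and produces, for an \emph{arbitrary} $\mu\in\mathcal{M}(X)$, a measure $\nu\in\mathcal{M}(X)$ with finite support together with a sequence $(g_n)$ in $G$ such that $g_n\mu\rightarrow\nu$. Since the $\mu$ in the statement of the proposition is arbitrary and the $\epsilon$ obtained in the first step does not depend on $\mu$, the two lemmas chain together without any compatibility condition to check.

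I do not expect any genuine obstacle in this argument: the only things to verify are that the $\epsilon$ delivered by Lemma \ref{contractible} is exactly the kind of object required in the hypothesis of Lemma \ref{finite support}, and that the universal quantifier over $\mu$ in the target statement is absorbed by the universal quantifier already present in Lemma \ref{finite support}. The real content of the result lives in those two lemmas — the passage from local contractibility to strong $\epsilon$-proximality, and the limiting argument that extracts a finitely supported weak$^*$ limit — so here it remains only to record that the hypotheses match and to conclude.
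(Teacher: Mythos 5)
Your proposal is correct and is exactly the paper's argument: the authors derive Proposition \ref{con fin} immediately by chaining Lemma \ref{contractible} (local contractibility implies strong $\epsilon$-proximality for a single uniform $\epsilon$) with Lemma \ref{finite support} (strong $\epsilon$-proximality yields a finitely supported weak$^*$ limit for every $\mu$). Your remark on the quantifier order — that $\epsilon$ is fixed before $\mu$ is chosen — is a fair point of care, and nothing further is needed.
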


\subsection{ Quasi-Schottky groups} (see Fig.1.)  A group $H$ with two generators acting on a topological space $X$ is said to be
{\it quasi-Schottky} if there are generators $h_1, h_2$ of $H$ and disjoint nonempty open sets $U_1, U_2, V_1, V_2$ and
$W$ of $X$ such that
$$
h_1(U_1\cup U_2\cup V_2\cup W)\subset U_1, \ \ \ h_1^{-1}(U_2\cup V_1\cup V_2\cup W)\subset V_1,
$$
and
$$
h_2(U_1\cup U_2\cup V_1\cup W)\subset U_2, \ \ \ h_2^{-1}(U_1\cup V_1\cup V_2\cup W)\subset V_2.
$$
By a ``ping-pong" argument of Tits \cite{Tits},  we know that $H$ is a free nonabelian group. In addition, for any $h\not=e$, $h(W)\cap W=\emptyset$;
this implies that the $H$ action on the open set  $HW$ is discrete.
\begin{figure}
\centering
 \begin{tikzpicture}
\tikzset{
    vertex/.style={circle,draw,minimum size=2.5em},
    edge/.style={->,> = latex'}
}

\node[vertex] (1) at (0,0) {$W$};
\node[vertex] (2) at (-3,0) {$U_1$};
\node[vertex] (3) at (3,0) {$V_1$};
\node[vertex] (4) at (0,2) {$V_2$};
\node[vertex] (5) at (0,-2) {$U_2$};

 \draw[edge] (1) -- (2) node[midway, above] {\footnotesize $h_1$};
 \draw[edge] (1) -- (3) node[midway, above] {\footnotesize$h_1^{-1}$};
 \draw[edge] (1) -- (4) node[midway, right] {\footnotesize$h_2^{-1}$};
 \draw[edge] (1) -- (5) node[midway, right] {\footnotesize$h_2$};
 \draw[edge] (2.415) -- (4.205)node[pos=.4, above, sloped, rotate=0] {\footnotesize$h_2^{-1}$};
 \draw[edge] (4.229) -- (2.388)node[pos=.4, below, sloped, rotate=0]{\footnotesize$h_1$}  ;
 \draw[edge] (2.655) -- (5.155)node[pos=.4, above=4, sloped, rotate=0]{\footnotesize$h_1$} ;
 \draw[edge] (5.125) -- (2.685)node[pos=.4, below=4, sloped, rotate=0]{\footnotesize$h_2$}  ;
 \draw[edge] (3.605) -- (5.395)node[pos=.4, above=4, sloped, rotate=0]{\footnotesize$h_1^{-1}$}  ;
 \draw[edge] (5.425) -- (3.575)node[pos=.4, below=4, sloped, rotate=0]{\footnotesize$h_2$} ;
\draw[edge] (3.145) -- (4.300)node[pos=.4, above=4, sloped, rotate=0] {\footnotesize$h_1^{-1}$};
 \draw[edge] (4.330) -- (3.115)node[pos=.4, below=4, sloped, rotate=0]{\footnotesize$h_2^{-1}$}  ;
\path (2) edge [loop left] [below]node {\footnotesize$h_1$} (2)
(3) edge [loop right] node[below] {\footnotesize$h_1^{-1}$} (3)
(4) edge [loop above] node [right]{\footnotesize$h_2^{-1}$} (4)
(5) edge [loop below] node[right] {\footnotesize$h_2$} (5);

\end{tikzpicture}
\caption{Quasi-Schottky group}
\end{figure}
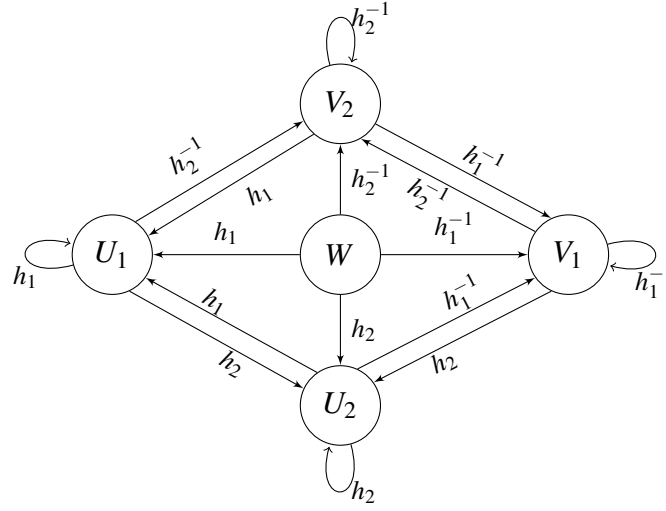


\section{Existence of contractible neighborhoods}

The following theorem is known as the Boundary Bumping Theorem (see e.g. \cite[p. 73]{Nad1}).

\begin{thm}\label{Boundary bumping}
Let $X$ be a continuum and let $U$ be a nonempty proper open subset of $X$. If $K$ is a component of $\overline U$, then $K\cap \partial_X(U)\not=\emptyset.$
\end{thm}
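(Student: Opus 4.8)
The plan is to argue by contradiction and to reduce the statement to a separation argument carried out inside the compact space $\overline{U}$. Suppose that $K\cap\partial_X(U)=\emptyset$. Since $\overline{U}=U\cup\partial_X(U)$ and $K\subseteq\overline{U}$, this assumption forces $K\subseteq U$. My goal is then to manufacture a subset $V$ of $X$ that is simultaneously nonempty, open in $X$, closed in $X$, and properly contained in $X$; the connectedness of $X$ will forbid such a set and thereby close the argument.

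The key topological input I would invoke is the classical fact that in a compact metric (more generally, compact Hausdorff) space the component of a point coincides with its quasi-component, that is, with the intersection of all relatively clopen sets containing it; this is standard and available in \cite{Nad1}. Applying it to the compact space $\overline{U}$ and to its component $K$, I get $K=\bigcap\{V: V \text{ is clopen in } \overline{U},\ K\subseteq V\}$. Now $\partial_X(U)$ is closed in $X$, hence compact, and it is disjoint from this intersection; a routine compactness argument (the complements of the clopen sets cover the compact set $\partial_X(U)$, extract a finite subcover) then produces finitely many such clopen sets whose intersection $V$ is clopen in $\overline{U}$, contains $K$, and satisfies $V\cap\partial_X(U)=\emptyset$.

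It remains to promote $V$ to a clopen subset of $X$. Since $V$ is disjoint from $\partial_X(U)$ and contained in $\overline{U}=U\cup\partial_X(U)$, we obtain $V\subseteq U$. Writing $V=W\cap\overline{U}$ with $W$ open in $X$ and using $V\subseteq U$, I find $V=W\cap U$, so $V$ is open in $X$; on the other hand $V$ is closed in $\overline{U}$, and $\overline{U}$ is closed in $X$, so $V$ is closed in $X$ as well. Thus $V$ is a nonempty clopen subset of $X$ (nonempty because it contains the nonempty component $K$) with $V\subseteq U\subsetneq X$, contradicting the connectedness of $X$.

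I expect the only genuine subtlety to lie in the second paragraph: everything hinges on the coincidence of components and quasi-components in the compact space $\overline{U}$, for it is precisely this that allows a connectedness obstruction (the component $K$ avoiding $\partial_X(U)$) to be detected by a clopen obstruction (the separating set $V$). The remaining work — transferring clopenness from $\overline{U}$ to $X$ and invoking the connectedness of $X$ — consists of formal point-set manipulations.
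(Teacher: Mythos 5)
Your proof is correct and follows the standard route: the paper itself offers no proof of this theorem, citing it from Nadler's \emph{Continuum Theory}, and the argument there is essentially yours --- in the compact space $\overline{U}$ components coincide with quasi-components, compactness of $\partial_X(U)$ extracts a single set $V$ clopen in $\overline{U}$ with $K\subseteq V$ and $V\cap\partial_X(U)=\emptyset$, and $V$ is then promoted to a nonempty proper clopen subset of $X$, contradicting connectedness. The point-set details you supply (in particular $V\subseteq U$ forcing $V=W\cap U$ to be open in $X$, and closedness of $V$ in $X$ via $\overline{U}$ closed in $X$) all check out, so there is nothing to add.
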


The following two lemmas are taken from  \cite{WSXX}. As the paper has not yet been officially published, for the convenience of the readers, we repeat the proof again here.

\begin{lem}\cite[Lemma 3.1]{WSXX}\label{large subcontinua}
Let $X$ be a regular curve and $U$ be a connected open subset of $X$ with $|\partial_X(U)|=n$ for some positive integer $n$.
If ${\rm diam}(U)>\epsilon$ for some $\epsilon>0$, then there is some connected open set $V\subset U$, such that $d(V, \partial_X(U))\geq \epsilon/4n$
and ${\rm diam}(V)\geq \epsilon/4n$.
\end{lem}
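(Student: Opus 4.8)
The plan is to produce $V$ as a connected component of a super-level set of the distance-to-boundary function. Write $P:=\partial_X(U)$, so $|P|=n$, and set $r:=\epsilon/(4n)$; throughout I would use the $1$-Lipschitz function $z\mapsto d(z,P)$ on the continuum $\overline U$ together with the open neighbourhoods $N(P,\rho):=\{z\in X: d(z,P)<\rho\}$. The preliminary observation is a diameter bound for these neighbourhoods: since $P$ has at most $n$ points, $N(P,\rho)$ is a union of at most $n$ open balls of radius $\rho$, so each connected component of $N(P,\rho)$ is a chain of at most $n$ mutually overlapping such balls and hence has diameter at most $2n\rho$ (sum the at most $n-1$ consecutive center-to-center distances, each $<2\rho$, and add the two radii).

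The crux is to locate a point of $U$ lying deep inside, namely to show $M:=\max_{z\in\overline U}d(z,P)\ge 2r$ (the maximum is attained, as $\overline U$ is compact). This is where the finiteness $|P|=n$ is essential. If instead $M<2r$, then $\overline U\subseteq N(P,2r)$; being connected, $\overline U$ would lie inside a single component of $N(P,2r)$, which by the preliminary bound has diameter at most $2n\cdot 2r=\epsilon$. This contradicts ${\rm diam}(\overline U)={\rm diam}(U)>\epsilon$, so $M\ge 2r$. I expect this to be the main obstacle, since it is exactly the step that converts the global hypothesis ${\rm diam}(U)>\epsilon$, combined with the bound on the number of boundary points, into the existence of a point far from $\partial_X(U)$.

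With $M\ge 2r$ in hand, I would choose $z^{*}\in\overline U$ with $d(z^{*},P)=M$; since $d(z^{*},P)>r>0$, we have $z^{*}\in\overline U\setminus P=U$. Let $V'=\{z\in U: d(z,P)>r\}$ and let $V$ be the connected component of $V'$ containing $z^{*}$. Because $X$ is a Peano continuum, hence locally connected, components of the open set $V'$ are open, so $V$ is open; moreover $V\subseteq U$ and $d(V,P)\ge r$ by construction. It remains to check ${\rm diam}(V)\ge r$. The frontier $\partial_X V$ is nonempty, as $V$ is a nonempty proper open subset of the connected space $X$ (note $V\cap P=\emptyset$). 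I claim $\partial_X V\subseteq\{z: d(z,P)=r\}$: any $w\in\overline V\setminus V$ cannot lie in $V'$, for otherwise its component of $V'$ would be an open set meeting $\overline V$ and hence equal to $V$, forcing $w\in V$; thus $w\in\overline{V'}\setminus V'$, and writing $w=\lim w_k$ with $w_k\in V'$ gives $d(w,P)\ge r$, while $d(w,P)>r$ would force $w\in P$ (since $w\notin V'$ and $w\in\overline U$) and hence $d(w,P)=0$, a contradiction; so $d(w,P)=r$. This last point is a boundary-bumping phenomenon (cf. Theorem \ref{Boundary bumping} applied to $V'$), here upgraded to the open component via local connectedness. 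Finally, picking any $w\in\partial_X V$, the $1$-Lipschitz estimate gives $d(z^{*},w)\ge d(z^{*},P)-d(w,P)=M-r\ge r$, whence ${\rm diam}(V)={\rm diam}(\overline V)\ge r=\epsilon/(4n)$, which is the required conclusion.
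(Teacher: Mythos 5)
Your proof is correct, and its crux is the same as the paper's: both arguments turn the hypothesis $\mathrm{diam}(U)>\epsilon$ plus $|\partial_X(U)|=n$ into a point of $U$ at distance at least $\epsilon/2n$ from $\partial_X(U)$, via the observation that a connected set covered by $n$ balls centered on the boundary has diameter at most $2n$ times the radius (the paper chains balls of radius $\epsilon/2n$ directly inside the connected set $U$; you chain inside a component of $N(P,2r)$ containing $\overline{U}$ --- same computation, same constant). Where you genuinely diverge is the finish. The paper takes $V$ to be the component of $B(x,\epsilon/4n)\cap U$ containing the deep point $x$ and invokes the Boundary Bumping Theorem (Theorem \ref{Boundary bumping}) to conclude that $\overline{V}$ reaches $\partial_X(B(x,\epsilon/4n))$, which yields $\mathrm{diam}(V)\geq \epsilon/4n$ since $x\in V$. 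You instead take $V$ to be the component of the super-level set $V'=\{z\in U: d(z,P)>r\}$ containing a maximizer $z^*$ of $d(\cdot,P)$, show that $\partial_X(V)$ lies on the level set $\{d(\cdot,P)=r\}$, and get the diameter bound from the $1$-Lipschitz estimate $d(z^*,w)\geq M-r\geq r$. Your route buys a self-contained, purely metric finish that avoids boundary bumping altogether (the only topological input is local connectedness, to make the component $V$ open --- which the paper's proof also needs, implicitly, for its $V$ to be open), at the cost of the extra level-set frontier analysis; the paper's ball-based finish is shorter once Theorem \ref{Boundary bumping} is available.

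One small imprecision to repair: a component of $N(P,\rho)$ need not literally be ``a chain of at most $n$ mutually overlapping balls,'' because open balls in a continuum with an arbitrary compatible metric need not be connected, so a single ball may spread over several components. Your bound $2n\rho$ survives unchanged, though: for any two points of a fixed component $C$, apply the standard chaining lemma to the connected set $C$ covered by the (at most $n$) open balls --- there is a chain of distinct balls joining the two points with consecutive balls intersecting, and summing center-to-center distances gives exactly your estimate. Even more simply, in your contradiction step you may skip components entirely and chain within the connected set $\overline{U}\subseteq N(P,2r)$ itself, which is precisely what the paper does inside $U$.
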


\begin{proof}
Let $\partial_X(U)=\{e_1, e_2, \cdots, e_n\}$. We claim that $U\setminus\bigcup_{i=1}^{n}B(e_i,\epsilon/2n)\neq\emptyset$. In fact, if $U\subset\bigcup_{i=1}^{n}B(e_i,\epsilon/2n)$, then for any two distinct points $a,b\in U$, by the connectivity of $U$ there are finite $B(e_{i_1},\epsilon/2n), \cdots, B(e_{i_m},\epsilon/2n)$ ($m\leq n$) such that $a\in B(e_{i_1},\epsilon/2n)$, $b\in B(e_{i_m},\epsilon/2n)$ and $B(e_{i_k},\epsilon/2n)\cap B(e_{i_{k+1}},\epsilon/2n)\neq\emptyset$ for $1\leq k\leq m-1$. Thus we have
\begin{align*}
d(a,b)\leq & d(a,e_{i_1})+d(e_{i_1},e_{i_2})+\cdots +d(e_{i_{m-1}},e_{i_m})+d(e_{i_m},b)\\
< & \frac{\epsilon}{2n}+\frac{\epsilon}{n}\cdot(m-1)+\frac{\epsilon}{2n}\\
\leq & \epsilon.
\end{align*}
It follows that ${\rm diam}(U)\leq\epsilon$, which is a contradiction. Hence $U\setminus\bigcup_{i=1}^{n}B(e_i,\epsilon/2n)\neq\emptyset$.

Take a point $x\in U\setminus\bigcup_{i=1}^{n}B(e_i,\epsilon/2n)$. Then $d(x,\partial_X(U))\geq \epsilon/2n$, and hence
$$d(B(x,\epsilon/4n), \partial_X(U))\geq \epsilon/4n.$$
 Let $V$ be the component of $B(x,\epsilon/4n)\cap U$ which contains $x$. Then $d(V,\partial_X(U))\geq\epsilon/4n$. Let $W=U\cap B(x,\epsilon/4n).$
Since $U$ is connected, $\emptyset\not= \partial_X(W)\subset\partial_X(B(x,\epsilon/4n))$.
This together with Theorem \ref{Boundary bumping} imlies $\emptyset\not= \partial_X(V)\subset \partial_X(B(x,\epsilon/4n))$.
 So, ${\rm diam}(V)={\rm diam}(\overline{V})\geq \epsilon/4n$.
\end{proof}

\begin{lem}\cite[Proposition 3.2]{WSXX}\label{convergence continua}
Let $X$ be a regular curve and let $(U_i)_{i=1}^\infty$ be a sequence of connected open subsets of $X$ with $|\partial_X(U_i)|=n$ for some positive integer $n$ and for each $i$. Suppose that there is some $\epsilon>0$ with ${\rm diam}(U_i)>\epsilon$ for each $i$.  Then there is a nonempty open subset $W$ of $X$ and infinitely many $i$'s such that $W$ is contained in $U_i$.
\end{lem}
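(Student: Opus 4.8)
The plan is to first use Lemma~\ref{large subcontinua} to push a uniformly ``deep'' point into the interior of each $U_i$, and then extract a single open set that survives inside infinitely many of the $U_i$ by a compactness argument combined with an elementary separation fact. Concretely, set $\eta=\epsilon/4n$. For each $i$, Lemma~\ref{large subcontinua} furnishes a connected open set $V_i\subset U_i$ with $\mathrm{diam}(V_i)\geq\eta$ and, crucially, $d(V_i,\partial_X(U_i))\geq\eta$. I would choose any point $x_i\in V_i$, so that $x_i\in U_i$ and $d(x_i,\partial_X(U_i))\geq\eta$, and then, using compactness of $X$, pass to a subsequence with $x_{i_k}\to p$ for some $p\in X$. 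The set $W$ will be a small connected open neighborhood of $p$.

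The key local fact I will invoke is the following separation principle: if $C$ is connected, $C\cap U\neq\emptyset$, and $C\cap\partial_X(U)=\emptyset$, then $C\subset U$; this follows by writing $C$ as the disjoint union of the relatively open sets $C\cap U$ and $C\cap(X\setminus\overline U)$ and using connectedness. To apply it I need $W$ connected, which is exactly where the hypothesis that a regular curve is a Peano continuum enters: by local connectedness, the component $W$ of the ball $B(p,\eta/2)$ containing $p$ is open and connected, and every $w\in W$ satisfies $d(w,p)<\eta/2$. For all large $k$ one has $d(p,x_{i_k})<\eta/2$, whence (a) $x_{i_k}\in W$ (as $W$ is a neighborhood of $p$ and $x_{i_k}\to p$), so $W\cap U_{i_k}\neq\emptyset$ because $x_{i_k}\in U_{i_k}$; and (b) for every $w\in W$, $d(w,x_{i_k})\leq d(w,p)+d(p,x_{i_k})<\eta$, so $w\notin\partial_X(U_{i_k})$ since $d(x_{i_k},\partial_X(U_{i_k}))\geq\eta$, giving $W\cap\partial_X(U_{i_k})=\emptyset$. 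The separation principle then forces $W\subset U_{i_k}$ for all large $k$, which is the desired conclusion for infinitely many indices.

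The one subtle point, and the main obstacle, is that open balls in $X$ need not be connected, so one cannot simply take $W=B(p,\eta/2)$ and apply the separation principle to the ball directly; this is precisely what is repaired by replacing the ball with its connected component through $p$, which is legitimate because $X$ is locally connected. Apart from this, the argument is bookkeeping: Lemma~\ref{large subcontinua} supplies the uniform interior distance $\eta$, compactness supplies the limit point $p$, and the separation principle converts ``$W$ meets $U_{i_k}$ but misses $\partial_X(U_{i_k})$'' into genuine containment. I note that one could equivalently pass to a Hausdorff limit $K=\lim\overline{V_{i_k}}$ in the hyperspace $C(X)$ and take $p\in K$ (this $K$ is nondegenerate since $\mathrm{diam}(K)\geq\eta$), but a convergent subsequence of interior points $x_{i_k}$ already suffices and keeps the argument self-contained.
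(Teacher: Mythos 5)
Your proof is correct, and it takes a genuinely more elementary route than the paper's. The paper also starts from Lemma~\ref{large subcontinua}, but then works in the hyperspace: it passes to a subsequence along which the closures $\overline{W_{i_k}}$ of the deep connected open sets converge in $C(X)$ to a continuum $A$ and the boundaries $\partial_X(U_{i_k})$ are pairwise $\epsilon/4n$-close in the Hausdorff metric; it then uses regularity at a point $z\in A$ (a small connected neighborhood $Q$ with \emph{finite} boundary) to pin down a single point $p\in\partial_X(Q)$ lying in infinitely many $W_{i_k}$, takes $W=W_{i_N}$ for the first such index, and rules out $W\nsubseteq U_{i_{k'}}$ because a crossing point $e\in W\cap\partial_X(U_{i_{k'}})$ would be $\epsilon/4n$-close to $\partial_X(U_{i_N})$, contradicting $d(W_{i_N},\partial_X(U_{i_N}))\geq\epsilon/4n$. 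You instead track one deep point $x_i$ per $U_i$, extract a convergent subsequence in $X$ itself, and take $W$ to be the connected component through the limit $p$ of the ball $B(p,\eta/2)$; the triangle inequality plus the standard separation principle (a connected set meeting an open set but missing its boundary lies inside it) then yields $W\subset U_{i_k}$ for all large $k$. Your argument dispenses with the hyperspace compactness, with the Hausdorff-closeness of the boundary sets, and with the finite-boundary neighborhood $Q$: regularity enters only through Lemma~\ref{large subcontinua} and through local connectedness (the Peano property), which you correctly identify as the point making the component $W$ open. In fact you use only the ``deep point'' half of Lemma~\ref{large subcontinua} --- the lower bound $\mathrm{diam}(V_i)\geq\eta$ is never needed in your argument, so citing it is harmless redundancy. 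The trade-off is that the paper's $W=W_{i_N}$ is a connected open set of diameter at least $\epsilon/4n$, whereas your $W$ may be arbitrarily small; since the lemma, and its application in Theorem~\ref{contract neighbor}, only require some nonempty open $W$ contained in infinitely many $U_i$, this costs nothing here.
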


\begin{proof}
For each $i$, it follows from Lemma~\ref{large subcontinua} that there is a connected open subset $W_i\subset U_i$ with $d(W_i, \partial_X(U_i))\geq \epsilon/4n$ and ${\rm diam}(W_i)\geq \epsilon/4n$. By the compactness of $2^X$ and $C(X)$, there are subsequences $(\overline{W_{i_k}})$ and $(\partial_X(U_{i_k}))$ such that $(\overline{W_{i_k}})$ converges to a subcontinuum $A$, and
$$d_H(\partial_X(U_{i_{k_1}}), \partial_X(U_{i_{k_2}}))<\epsilon/4n, \forall k_1\not=k_2.\eqno(1)$$
Take a point $z\in A$. Then there exists a connected open neighborhood $Q$ of $z$ such that $\partial_X(Q)$ is finite and ${\rm diam}(Q)<\epsilon/4n$. Since $(\overline{W_{i_k}})$ converges to $A$, there exists a positive integer $N$ such that $W_{i_k}\cap Q\neq\emptyset$ for each $k\geq N$. Noting that  ${\rm diam}(W_{i_k})\geq \epsilon/4n$ and ${\rm diam}(Q)<\epsilon/4n$, we have $W_{i_k}\nsubseteq Q$. Thus $W_{i_k}\cap \partial_X(Q)\neq\emptyset$ by the connectivity of $W_{i_k}$.
Hence, there exist a point $p\in \partial_X(Q)$ and infinitely many $k$'s such that $p\in W_{i_k}$. Passing to a subsequence if necessary, we may suppose that $p\in W_{i_k}$ for each $k\geq N$.

Let $W=W_{i_N}$. To complete the proof, we only need to show that $W\subset U_{i_k}$ for all $k\geq N$. Otherwise, there is some $k'\geq N$ with $W\nsubseteq U_{i_{k'}}$.  Since $W$ is connected and $p\in W\cap U_{i_{k'}}$, there is a point $e\in W\cap\partial_X(U_{i_{k'}})$.  By $(1)$,  there is $e'\in \partial_X(U_{i_N})$ such that $d(e,e')<\epsilon/4n$. Then we have $d(W, e')\leq d(e,e')<\epsilon/4n$. This contradicts the assumption that $d(W,\partial_X(U_{i_N}))\geq\epsilon/4n$ at the beginning.
\end{proof}

\begin{thm}\label{contract neighbor}
Let $X$ be a regular curve with a point of finite order and let a group $G$ act on $X$ minimally and sensitively. Then every point $x\in X$ has a contractible neighborhood.
\end{thm}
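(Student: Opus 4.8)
The plan is to first reduce the statement to producing a \emph{single} point that possesses a $G$-contractible neighborhood, and then to manufacture such a point out of the interplay between sensitivity (which supplies expansion) and the finite-order hypothesis (which supplies a uniform bound on boundary cardinalities). For the reduction, let $\Omega$ be the set of points of $X$ admitting a $G$-contractible neighborhood. If $K$ is a $G$-contractible neighborhood of $x$, witnessed by $(g_n)$ with $\operatorname{diam}(g_nK)\to 0$, then every smaller neighborhood $K'\subset K$ also satisfies $\operatorname{diam}(g_nK')\to 0$, so $\Omega$ is open; and $hK$ is a $G$-contractible neighborhood of $hx$ via the sequence $(g_nh^{-1})$, so $\Omega$ is $G$-invariant. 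Since a minimal action admits no proper nonempty open invariant subset, it then suffices to show $\Omega\neq\emptyset$.

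To see $\Omega\neq\emptyset$, let $p$ be a point of finite order $n$. Because $X$ is a regular curve it is locally connected, so I would combine local connectedness with ${\rm ord}(p)\leq n$ as follows: given any small open neighborhood of $p$ having boundary of cardinality $\leq n$, the connected component $U$ of $p$ in it is again open (local connectedness), and in a locally connected space the boundary of a component of an open set is contained in the boundary of that set, so $|\partial_X(U)|\leq n$. Thus for each $m\geq 1$ I obtain a connected open neighborhood $U_m$ of $p$ with $|\partial_X(U_m)|\leq n$ and $\operatorname{diam}(U_m)<1/m$. Applying sensitivity with constant $c>0$, choose $h_m\in G$ with $\operatorname{diam}(h_mU_m)>c$; since homeomorphisms preserve boundaries, each $h_mU_m$ is connected open with $|\partial_X(h_mU_m)|=|\partial_X(U_m)|\leq n$ and diameter exceeding $c$.

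These boundary cardinalities lie in $\{1,\dots,n\}$ (they are positive because each $U_m$ is a proper subset of the nondegenerate continuum $X$), so by the pigeonhole principle some value is attained along an infinite subsequence. Applying Lemma~\ref{convergence continua} to that subsequence, whose members now share a fixed boundary size and all have diameter greater than $c$, yields a nonempty open set $W$ with $W\subset h_mU_m$ for infinitely many $m$. For each such $m$ this gives $h_m^{-1}W\subset U_m$, hence $\operatorname{diam}(h_m^{-1}W)\leq\operatorname{diam}(U_m)<1/m$. Consequently $\operatorname{diam}(h_m^{-1}W)\to 0$ along this infinite index set, so $W$ is $G$-contractible and every point of $W$ belongs to $\Omega$, as desired.

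The hard part is precisely the conversion of the purely \emph{local} expansion guaranteed by sensitivity into a single genuinely contractible open set. Sensitivity produces only one expanding element per open set and gives no control over how the expanded sets are positioned in $X$; it is the regular-curve geometry packaged in Lemma~\ref{convergence continua} (resting in turn on Lemma~\ref{large subcontinua}) that forces infinitely many of the large translates $h_mU_m$ to share a common open core $W$, after which reversing the group elements contracts $W$. I expect the indispensable subtlety to be the role of the finite-order point: it is exactly what keeps the cardinalities $|\partial_X(h_mU_m)|$ uniformly bounded, so that the pigeonhole step produces a subsequence of \emph{fixed} boundary size and Lemma~\ref{convergence continua} becomes applicable. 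Without such a point the boundaries could grow without bound and this mechanism would break down.
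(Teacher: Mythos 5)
Your proposal is correct and follows essentially the same route as the paper's proof: shrink neighborhoods at the finite-order point, blow them up by sensitivity, apply Lemma~\ref{convergence continua} to the expanded sets to extract a common open core $W$, pull $W$ back by the inverses to contract it, and finally spread contractibility to all of $X$ by minimality. Your two extra touches --- passing to connected components (so the hypotheses of Lemma~\ref{convergence continua} are genuinely met) and pigeonholing a subsequence of fixed boundary cardinality --- merely make explicit steps the paper leaves implicit, and your reduction via the open invariant set $\Omega$ is equivalent to the paper's direct transport of $W$ to an arbitrary point via minimality.
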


\begin{proof}
 Let $x$ be a point of finite order in $X$. Then there is a positive integer $n$ and a sequence of open neighborhoods $(U_i)_{i=1}^\infty$ of $x$
such that $|\partial_X(U_i)|=n$ and ${\rm diam}(U_i)\rightarrow 0$ as $i\rightarrow\infty$. Let $c>0$ be a sensitivity constant of the action.
Then for each $i$, there is $g_i\in G$ such that ${\rm diam}(g_i(U_i))>c$. By Lemma \ref{convergence continua}, there exists a nonempty open set
$W$ in $X$ such that $W$ is contained in infinitely many $U_i$'s. We may as well assume that $W\subset U_i$ for each $i$.
Then ${\rm diam}(g_i^{-1}W)\leq {\rm diam}(U_i)\rightarrow 0$. For any $y\in X$, by the minimality of the action, there is some $g\in G$ with
$gy\in W$. Take an open neighborhood $V$ of $y$ such that $gV\subset W$. Then $V$ is a contractible neighborhood of $y$ as
 ${\rm diam}(g_i^{-1}gV)\rightarrow 0 \ (i\rightarrow \infty)$.
\end{proof}

\section{Measures induced by size functions}

Let $X$ be a continuum. Let $\tau $ be the size function on $2^{X}$ as defined in Section \ref{hyperspace and size function}. Set $\mathcal{C}=C(X)\cup\{\emptyset\}$ and $\tau(\emptyset)=0$. Let $\mu$ be the outer-measure defined as in Theorem \ref{metric outermeas}  by
\begin{equation}\label{meas mu}
\mu(E)=\sup_{\delta>0}\mu_{\delta}(E),
\end{equation}
 where
\begin{displaymath}
\mu_{\delta}(E)=\inf\left\{\sum_{i=1}^{\infty}\tau(C_i):~C_i\in\mathcal{C}, \rm{diam}(C_i)\leq \delta, E\subseteq\bigcup_{i=1}^\infty C_i\right\}.
\end{displaymath}
Here, we should note that $\mu_\delta(E)=+\infty$ if the infimum is taken over an empty set. By Theorem \ref{metric outermeas}, the restriction of $\mu$ to the Borel $\sigma$-algebra is a measure.

\begin{lem}\label{atomless of mu}
$\mu$ is atomless.
\end{lem}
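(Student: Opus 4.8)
The plan is to reduce the claim to the statement that every singleton is $\mu$-null, and to establish the latter directly from the construction of $\mu$. First I would observe that for any $x\in X$ the singleton $\{x\}$ is a degenerate subcontinuum, so $\{x\}\in\mathcal C$ and, by property (2) of a size function, $\tau(\{x\})=0$. Since $\{x\}$ covers itself, has diameter $0\le\delta$, and one may pad the cover with copies of $\emptyset$ (recall $\tau(\emptyset)=0$), we get $\mu_\delta(\{x\})\le\tau(\{x\})=0$ for every $\delta>0$, and hence $\mu(\{x\})=\sup_{\delta>0}\mu_\delta(\{x\})=0$.

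Next I would upgrade ``every point is null'' to ``$\mu$ has no atoms.'' Suppose, for contradiction, that some Borel set $A$ is an atom, i.e. $\mu(A)>0$ and every Borel $B\subseteq A$ satisfies $\mu(B)=0$ or $\mu(A\setminus B)=0$. Since $X$ is a continuum, it is a separable metric space, so for each $n$ I can cover $X$ by countably many open balls $\{B(x_{n,k},1/n)\}_{k}$ of radius $1/n$ centred at points of a fixed countable dense set. Then $A=\bigcup_k\big(A\cap B(x_{n,k},1/n)\big)$, and by countable subadditivity the pieces cannot all be null; hence some ball $D_n:=B(x_{n,k(n)},1/n)$ satisfies $\mu(A\cap D_n)>0$, whence by the atom property $\mu(A\setminus D_n)=0$. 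Note $\operatorname{diam}(D_n)\le 2/n$.

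Finally I would intersect these balls. Put $A'=A\cap\bigcap_n D_n=A\setminus\bigcup_n(A\setminus D_n)$. Each set $A\setminus D_n$ is $\mu$-null, so $\mu(A\setminus A')=0$ and therefore $\mu(A')=\mu(A)>0$; in particular $A'\neq\emptyset$. On the other hand $A'\subseteq D_m$ gives $\operatorname{diam}(A')\le 2/m$ for every $m$, so $\operatorname{diam}(A')=0$ and $A'$ is a single point $\{x_0\}$. Then $\mu(\{x_0\})=\mu(A')=\mu(A)>0$, contradicting the first step. Hence no atom exists and $\mu$ is atomless.

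The first step is essentially formal; the substance of the argument is the second reduction, which is where separability of the continuum is genuinely used—on a non-separable space ``every singleton null'' need not imply ``no atoms.'' The only point requiring care is that the balls covering $X$ are not disjoint, so I rely on countable subadditivity (rather than additivity) to conclude that not all pieces $A\cap B(x_{n,k},1/n)$ can be null, together with the atom property to force the surviving mass into a single shrinking ball at each scale; since the intersection of these balls has diameter zero yet still carries the full mass $\mu(A)>0$, it must be a single point, contradicting nullity of points.
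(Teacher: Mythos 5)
Your proof is correct, and its first step is, in fact, the paper's entire proof: the authors simply note that $\tau(\{x\})=0$ and hence $\mu(\{x\})=0$, reading ``atomless'' in the sense customary for Borel measures on metric spaces, namely that every singleton is null. That weaker statement is all the paper ever uses (e.g.\ in Lemma \ref{atomless}, via regularity of Borel measures). Your second and third paragraphs go further and establish the abstract measure-theoretic statement --- no Borel set $A$ with $\mu(A)>0$ all of whose subsets have measure $0$ or full complement-measure in $A$ --- by the standard shrinking-ball argument, which is exactly where separability of the continuum enters; this reduction is valid and is the usual proof that the two notions of atom coincide on separable metric spaces. Two small points in your favor: padding the cover of $\{x\}$ with copies of $\emptyset$ is the right way to make $\mu_\delta(\{x\})\le\tau(\{x\})$ literally conform to the definition of $\mu_\delta$ as an infimum over countable covers; and your formulation of the atom property as ``$\mu(B)=0$ or $\mu(A\setminus B)=0$'' (rather than $\mu(B)\in\{0,\mu(A)\}$) is the robust choice here, since the paper explicitly warns after Lemma \ref{general 0} that $\mu$ may take infinite values, and with $\mu(A)=\infty$ the naive formulation would not let you conclude $\mu(A\setminus D_n)=0$. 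Likewise your avoidance of subtraction ($\mu(A)\le\mu(A')+\mu(A\setminus A')$ and monotonicity, rather than $\mu(A')=\mu(A)-\mu(A\setminus A')$) keeps the argument sound in the infinite-measure case. In short: same core computation as the paper, plus a correct, self-contained proof of an equivalence the paper leaves implicit; the paper's version is shorter and sufficient for its applications, while yours would be required if one insisted on the abstract definition of an atom.
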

\begin{proof}
For any $x\in X$, it is clear $\tau(\{x\})=0$. By the definition of $\mu$, we have $\mu(\{x\})=0$.
\end{proof}

\begin{lem}\label{meas and size}
For any $C\in C(X)$, we have $\mu(C)\geq\tau (C)$.
\end{lem}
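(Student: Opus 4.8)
The plan is to bound $\mu$ from below by unwinding its definition and reducing everything to the subadditivity of $\tau$ recorded in Lemma~\ref{size function}. Since $\mu(C)=\sup_{\delta>0}\mu_\delta(C)\geq \mu_\delta(C)$ for every $\delta>0$, it suffices to prove the stronger pointwise estimate $\mu_\delta(C)\geq \tau(C)$ for each fixed $\delta>0$; taking the supremum over $\delta$ then yields the claim at once.

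First I would fix $\delta>0$ and inspect the infimum defining $\mu_\delta(C)$. If there is no admissible cover of $C$ by members of $\mathcal C$ of diameter $\leq\delta$, then by the stated convention $\mu_\delta(C)=+\infty\geq\tau(C)$ and there is nothing to prove. Otherwise I take an arbitrary admissible cover, that is, sets $C_i\in\mathcal C=C(X)\cup\{\emptyset\}$ with ${\rm diam}(C_i)\leq\delta$ and $C\subseteq\bigcup_{i=1}^\infty C_i$. I would then discard the indices with $C_i=\emptyset$: since $\tau(\emptyset)=0$ these contribute nothing to $\sum_i\tau(C_i)$, and since the empty set contributes nothing to the union, the surviving (genuinely subcontinuum) pieces still cover $C$.

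At this stage the remaining $C_i$ lie in $C(X)$ and satisfy $C\subseteq\bigcup_i C_i$, so Lemma~\ref{size function} applies verbatim and gives $\tau(C)\leq\sum_i\tau(C_i)$. (If only finitely many of the $C_i$ are nonempty, one either repeats the last one or invokes directly the Lebesgue-measure estimate used to prove Lemma~\ref{size function} for a finite union; either way the inequality persists.) Because this inequality holds for \emph{every} admissible cover, it is preserved when passing to the infimum over all such covers, whence $\mu_\delta(C)=\inf\sum_i\tau(C_i)\geq\tau(C)$.

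I do not expect a genuine obstacle: the only points demanding care are the bookkeeping for the empty members of $\mathcal C$ and the degenerate case in which no admissible cover exists. The entire content of the lemma is the observation that the covering geometry built into $\mu_\delta$ is precisely the hypothesis of the subadditivity Lemma~\ref{size function}; once the empty sets are handled, the bound is immediate and uniform in $\delta$, so the supremum defining $\mu(C)$ inherits it and the proof closes.
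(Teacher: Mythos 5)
Your argument is correct and is essentially the paper's proof: both reduce the claim to $\mu_\delta(C)\geq\tau(C)$ via the subadditivity of Lemma~\ref{size function}, treating the no-cover case by the $+\infty$ convention, the only (cosmetic) difference being that the paper picks an $\epsilon$-near-optimal cover and lets $\epsilon\to 0$ instead of bounding an arbitrary cover and passing to the infimum, and it silently ignores the empty members of $\mathcal{C}$ that you discard explicitly. One small correction to your parenthetical: padding a finite cover by repeating its last nonempty piece $C_n$ makes the bound $\sum_i\tau(C_i)$ infinite whenever $\tau(C_n)>0$, so you should instead use your second option --- the finite-union form of the Lebesgue-measure estimate --- or pad with singletons, which satisfy $\tau(\{x\})=0$.
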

\begin{proof}
If for some $\delta>0$, there is no sequence $(C_i)$ in $\mathcal{C}$ with $\rm{diam}(C_i)\leq \delta$ and $C\subseteq\cup C_i$,
then
 $$\mu(C)\geq \mu_\delta(C)=+\infty\geq \tau(C).$$
Otherwise, for every $\delta>0$ and $\epsilon>0$, there is a sequence $(C_i)$ in $\mathcal{C}$ with $\rm{diam}(C_i)\leq \delta$ and $C\subseteq\cup C_i$ such that
\[ \tau(C)\leq \sum_{i=1}^{\infty} \tau (C_i)\leq \mu_\delta(C)+\epsilon,\]
by Lemma \ref{size function}. By the arbitrariness of $\epsilon$ and the definition of $\mu$, we still have $\mu(C)\geq\tau (C)$.
\end{proof}

\begin{lem}\label{general 0}
For any continuum $X$, there exists an atomless Borel measure $\mu$ on $X$ such that for any sequence $(Y_n)$ of subcontinua, if $\mu(Y_n)\rightarrow 0$, then ${\rm{diam}} (Y_n)\rightarrow 0$.
\end{lem}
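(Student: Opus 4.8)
The plan is to use the measure $\mu$ defined in \eqref{meas mu}, for which the two decisive properties have already been recorded: it is atomless by Lemma \ref{atomless of mu}, and it dominates the size function on subcontinua, $\mu(C)\ge\tau(C)$ for every $C\in C(X)$, by Lemma \ref{meas and size}. Since $\mu$ is the restriction to the Borel $\sigma$-algebra of the outer measure produced by Theorem \ref{metric outermeas}, it is a genuine atomless Borel measure on $X$, so the only thing left to verify is the implication $\mu(Y_n)\to 0\Rightarrow {\rm diam}(Y_n)\to 0$.

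The domination bound reduces this to a statement about $\tau$ alone. Indeed, if $(Y_n)$ is a sequence of subcontinua with $\mu(Y_n)\to 0$, then $0\le\tau(Y_n)\le\mu(Y_n)$ by Lemma \ref{meas and size}, whence $\tau(Y_n)\to 0$. Thus it suffices to prove the purely hyperspace-theoretic fact that, for subcontinua of a continuum, $\tau(Y_n)\to 0$ forces ${\rm diam}(Y_n)\to 0$.

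I would establish this last implication by contradiction, using the compactness of the hyperspace. Suppose ${\rm diam}(Y_n)\not\to 0$; then, after passing to a subsequence, there is some $\epsilon>0$ with ${\rm diam}(Y_{n_k})\ge\epsilon$ for all $k$. Because $C(X)$ is a compact metric space in the Hausdorff metric, a further subsequence of $(Y_{n_k})$ converges to some $Y\in C(X)$. As ${\rm diam}$ is continuous on the hyperspace (it is $2$-Lipschitz for $d_H$), we obtain ${\rm diam}(Y)\ge\epsilon$, so $Y$ is nondegenerate; choosing any $y\in Y$ gives $\{y\}\subsetneq Y$, and the defining monotonicity of a size function in Definition \ref{size fun} yields $\tau(Y)>\tau(\{y\})=0$. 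On the other hand $\tau$ is continuous on $2^X$, so $\tau(Y_{n_k})\to\tau(Y)>0$, contradicting $\tau(Y_{n_k})\to 0$.

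Consequently ${\rm diam}(Y_n)\to 0$, which completes the argument. The heavy lifting lies entirely in the preceding lemmas that construct $\mu$ and certify both its atomlessness and the bound $\mu\ge\tau$; the step I expect to carry the real content here is the compactness argument of the last paragraph, where the positivity of the size function on the nondegenerate Hausdorff limit $Y$ is exactly what converts a lower bound on diameters into a lower bound on $\tau$.
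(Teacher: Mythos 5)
Your proposal is correct and follows essentially the same route as the paper: both use the measure $\mu$ of \eqref{meas mu} together with the domination $\mu(C)\geq\tau(C)$ from Lemma \ref{meas and size}, pass to a Hausdorff-convergent subsequence $Y_{n_k}\rightarrow Y$ in the compact hyperspace $C(X)$, and use continuity of $\tau$ with the strict monotonicity of the size function to conclude that the nondegenerate limit $Y$ would have to satisfy $\tau(Y)=0$, a contradiction. The only cosmetic difference is the direction of the contradiction (the paper extracts the subsequence from the assumption ${\rm diam}(Y_n)\not\rightarrow 0$ and derives $\tau(Y)=0$, whereas you note ${\rm diam}(Y)\geq\epsilon$ first and then contradict $\tau(Y_{n_k})\rightarrow 0$), which is immaterial.
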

\begin{proof}
Let $\mu$ be the measure defined as (\ref{meas mu}). If ${\rm{diam}} (Y_n)$ does not converge to $0$, then there exist a subsequence $(Y_{n_k})$ and $Y\in C(X)$
with ${\rm diam}(Y)>\delta$ for some $\delta>0$, such that $Y_{n_k}\rightarrow Y$. By Lemma \ref{meas and size},  $\mu(Y_{n_k})\geq \tau(Y_{n_k})\rightarrow 0$. Since $\tau$ is continuous on $2^{X}$, we have $\tau(Y)=0$. By the Definition \ref{size fun} of the size function, $Y$ must be degenerate. This is a contradiction.
\end{proof}

{\bf Note.} Here we should note that the measure $\mu$ in Lemma \ref{general 0} may take the infinite value at some Borel subsets.

\section{A characterization of totally regular curves}

Before the proof of the main result in this section, let us first recall and show some lemmas in continuum theory.

\begin{lem}\cite[7.21]{Nad1}\label{ULAC}
A Peano continuum $(X,d)$ is ULAC (uniformly  locally arcwise connected). That is for any $\varepsilon>0$ there exists $\delta>0$ such that if $d(x,y)<\delta$ and $x\neq y$, then there is an arc $A\subset X$ such that $A$ has end points $x$ and $y$ and $\text{diam}(A)<\varepsilon$.
\end{lem}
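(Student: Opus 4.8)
The plan is to derive the uniform statement from two ingredients: (i) a \emph{uniform} local connectivity of $X$, extracted from pointwise local connectivity by a compactness (Lebesgue number) argument; and (ii) the classical fact that a connected open subset of a Peano continuum is arcwise connected. Granting these, for a prescribed $\varepsilon>0$ one first produces a $\delta>0$ so that any two $\delta$-close points lie together in a single connected open set of diameter $<\varepsilon$, and then joins them by an arc \emph{inside} that set, whence the arc automatically has diameter $<\varepsilon$.

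First I would establish uniform local connectivity. Since $X$ is locally connected, every point $p\in X$ possesses a connected open neighborhood $U_p$ with $\mathrm{diam}(U_p)<\varepsilon$ (take a connected open neighborhood contained in a small enough metric ball about $p$). The family $\{U_p:p\in X\}$ is an open cover of the compact space $X$, so it admits a Lebesgue number $\delta>0$: every subset of $X$ of diameter less than $\delta$ is contained in some member of the cover. In particular, if $d(x,y)<\delta$, then the two-point set $\{x,y\}$ has diameter $<\delta$ and hence lies in some connected open set $U_p$ with $\mathrm{diam}(U_p)<\varepsilon$.

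It remains to replace the connected set $U_p$ by an arc without enlarging the diameter. Here I would invoke the standard theorem that every connected open subset of a Peano continuum is arcwise connected. Applying it to $U_p$ (which, being an open subset of $X$, is itself connected, locally connected and locally compact), one obtains an arc $A\subseteq U_p$ with endpoints $x$ and $y$; since $A\subseteq U_p$ we get $\mathrm{diam}(A)\le \mathrm{diam}(U_p)<\varepsilon$, as required. Note that the hypothesis $x\neq y$ is exactly what is needed for an honest arc (rather than a degenerate one) to exist.

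The only substantial point is ingredient (ii), the arcwise connectedness of connected open subsets of a Peano continuum; this is the heart of Hahn--Mazurkiewicz theory, and I would treat it as citable (it is established in Nadler's book). If a self-contained argument were wanted, the arc would be built as a limit of successively finer chains of connected open sets linking $x$ to $y$, using uniform local connectivity at each scale to keep the diameters controlled and to guarantee convergence to an arc; carrying out this chain-to-arc limiting construction, rather than the easy compactness step, is where the real work lies.
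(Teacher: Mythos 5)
Your proof is correct, and since the paper supplies no proof of this lemma (it is quoted directly from Nadler, where the argument runs along exactly these lines), there is nothing to reconcile: the Lebesgue-number extraction of uniform local connectivity followed by the theorem that connected open subsets of a Peano continuum are arcwise connected is the standard route. Treating that second ingredient as citable is also consistent with the paper's own practice, which invokes the same fact without proof when enlarging closed subsets of connected open sets to subcontinua.
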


The following lemma is an easy conclusion of the regularity of Borel measures on metric spaces.

\begin{lem}\label{atomless}
Let $\mu$ be an atomless Borel probability measure on a continuum $(X,d)$. Then for any sequence of subcontinua $(K_n)$ of $X$ with $\lim_{n\rightarrow \infty}\text{diam}(K_n)=0$, we have $\lim_{n\rightarrow \infty}\mu(K_n)=0$.
\end{lem}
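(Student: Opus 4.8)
The plan is to argue by contradiction, reducing the claim to the single fact that an atomless finite measure gives zero mass to every point and hence arbitrarily small mass to small closed balls around it. Suppose the conclusion fails. Then there is $\epsilon>0$ and, after passing to a subsequence which I relabel as $(K_n)$, we have $\mu(K_n)\geq\epsilon$ for every $n$. First I would choose a point $x_n\in K_n$ for each $n$; since $X$ is compact, a further subsequence of $(x_n)$ converges to some $x\in X$, and I pass to it. The key geometric observation is that the hypothesis $\mathrm{diam}(K_n)\to 0$ together with $x_n\to x$ forces the sets $K_n$ to be eventually swallowed by any fixed ball about $x$: for $y\in K_n$ one has $d(y,x)\leq d(y,x_n)+d(x_n,x)\leq\mathrm{diam}(K_n)+d(x_n,x)$, and the right-hand side tends to $0$ uniformly in $y\in K_n$. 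Hence for each fixed $r>0$ we get $K_n\subseteq\overline{B}(x,r)$ for all large $n$.

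Next I would exploit atomlessness. Since $\mu(\{x\})=0$ and the closed balls $\overline{B}(x,1/k)$ form a decreasing sequence of Borel sets whose intersection is $\{x\}$, continuity from above of the finite measure $\mu$ yields $\mu(\overline{B}(x,1/k))\to\mu(\{x\})=0$ as $k\to\infty$. (Alternatively one invokes outer regularity, as the statement suggests, to find an open $U\ni x$ of small measure containing all large $K_n$; the nested-ball form is self-contained and needs only that $\mu$ is finite.) Thus I can fix $r>0$ with $\mu(\overline{B}(x,r))<\epsilon$. Combining this with the previous paragraph, $\mu(K_n)\leq\mu(\overline{B}(x,r))<\epsilon$ for all large $n$, contradicting $\mu(K_n)\geq\epsilon$. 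This contradiction proves $\mu(K_n)\to 0$.

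There is no serious obstacle here; the only points requiring care are that the measure is finite (so that continuity from above is legitimate, which a probability measure certainly guarantees) and that the containment $K_n\subseteq\overline{B}(x,r)$ is \emph{uniform} over the whole subcontinuum $K_n$ rather than holding merely at the chosen base point $x_n$. The diameter hypothesis is precisely what supplies this uniformity, so once the compactness extraction of the limit point $x$ is in place the estimate closes immediately.
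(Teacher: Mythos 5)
Your argument is correct: the extraction of a limit point $x$ of base points, the uniform containment $K_n\subseteq\overline{B}(x,r)$ supplied by $\mathrm{diam}(K_n)\to 0$, and continuity from above applied to $\overline{B}(x,1/k)\downarrow\{x\}$ (legitimate since $\mu$ is a probability measure, and $\mu(\{x\})=0$ by atomlessness) close the contradiction cleanly. The paper offers no written proof, remarking only that the lemma is an easy consequence of the regularity of Borel measures on metric spaces, and your proof is exactly a self-contained fleshing-out of that sketch --- with the nested closed balls standing in for outer regularity, as you yourself note.
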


\begin{lem}\label{peano}
Let $X$ be a Peano continuum. Let $V$ be a connected open subset and $F$ be a closed subset of $X$. If $F\subset V$, then there is a subcontinuum $K$ of $X$ satisfying $F\subset K\subset V$.
\end{lem}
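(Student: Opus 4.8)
The plan is to reduce the statement to gluing together finitely many small subcontinua by arcs that stay inside $V$, exploiting compactness of $F$ and local arcwise connectedness of the Peano continuum $X$. First I would record two consequences of $X$ being a Peano continuum. (i) Since $X$ is locally connected, the component of any open set through a given point is open; hence, because $V$ is open, every $x\in F\subset V$ possesses a connected open neighborhood $W_x$ with $\overline{W_x}\subset V$ (take the component containing $x$ of a ball $B(x,r)$ chosen so small that $\overline{B(x,r)}\subset V$). Each such $\overline{W_x}$ is then a subcontinuum lying in $V$. (ii) By Lemma~\ref{ULAC} the space $X$ is locally arcwise connected; this property passes to the open subspace $V$ (arcs realizing local connectivity near a point of $V$ may be taken inside a prescribed neighborhood contained in $V$), and a connected, locally arcwise connected metric space is arcwise connected. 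Thus $V$ is arcwise connected, and the joining arcs I produce below will automatically remain in $V$.

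Next I would invoke compactness. The sets $\{W_x : x\in F\}$ form an open cover of the compact set $F$, so finitely many of them, say $W_{x_1},\dots,W_{x_m}$, already cover $F$. I fix a point $p_i\in W_{x_i}$ for each $i$. Using the arcwise connectedness of $V$ from step (ii), for each $i=2,\dots,m$ I choose an arc $A_i\subset V$ joining $p_1$ to $p_i$; each $A_i$ is itself a subcontinuum contained in $V$.

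Finally I would assemble the desired continuum as
\[
K=\bigcup_{i=1}^{m}\overline{W_{x_i}}\ \cup\ \bigcup_{i=2}^{m}A_i .
\]
This $K$ is compact, being a finite union of compact sets, and it is connected because each $A_i$ contains $p_1\in\overline{W_{x_1}}$ while meeting $\overline{W_{x_i}}$ at $p_i$, so all the pieces are linked through $\overline{W_{x_1}}$; hence $K$ is a subcontinuum. Since the $W_{x_i}$ cover $F$ we get $F\subset\bigcup_i W_{x_i}\subset K$, and since every $\overline{W_{x_i}}$ and every $A_i$ lies in $V$ we get $K\subset V$, as required. (If $F=\emptyset$ one simply takes $K$ to be any singleton in $V$.)

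The hard part will be step (ii): verifying not merely that $V$ is arcwise connected, but that the connecting arcs can be taken \emph{inside} $V$ rather than only inside $X$. This is exactly where I pass local arcwise connectedness (Lemma~\ref{ULAC}) from the ambient Peano continuum to the open subspace $V$, together with the elementary fact that a connected, locally arcwise connected space is arcwise connected; once this is in place, the remainder is a routine compactness-and-union argument.
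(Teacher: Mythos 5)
Your proof is correct and follows essentially the same route as the paper: cover $F$ by connected open sets $W_x$ with $\overline{W_x}\subset V$, pass to a finite subcover by compactness, join the pieces by arcs inside $V$, and take the (automatically compact) union. The only cosmetic differences are that you join the pieces in a star pattern rather than a chain, and you re-derive via Lemma~\ref{ULAC} the standard fact the paper simply invokes, namely that a connected open subset of a Peano continuum is arcwise connected.
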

\begin{proof}
Since $X$ is locally connected, for every $x\in F$, there is a connected neighborhood $W_x$ of $x$ such that $\overline{W_x}\subset V$. By the compactness,  there exist $x_1,\cdots,x_n\in F$ such that $F\subset W_{x_1}\cup\cdots\cup W_{x_n}$. Noting that every connected open set of a Peano continuum is arcwise connected, for each $i\in\{1,\cdots,n-1\}$, there is an arc $A_i\subset V$ connecting $B_{x_i}$ and $B_{x_{i+1}}$. Let
\[ K= \overline{(\cup_{i=1}^n B_{x_i})\cup (\cup_{i=1}^{n-1} A_i)}.\]
Then $K$ is a subcontinuum satisfying $F\subset K\subset V$.
\end{proof}

The following definition describes a kind of $1$-dimensionality of a continuum from the viewpoint of measure.

\begin{defin}\label{one dim}
A continuum $(X,d)$ is said to be of \textit{$1$-dimension in the sense of measure} if there is an atomless Borel probability measure $\mu$
on $X$ such that for any sequence of subcontinua $(Y_n)$ of $X$ with $\mu(Y_n)\rightarrow 0$, we always have $\text{diam}(Y_n)\rightarrow 0$.
\end{defin}

It can be seen from the following lemma that a continuum of $1$-dimension in the sense of measure is always a Peano continuum.
\begin{lem}\label{1-dim}
If a continuum $X$ is of $1$-dimension in the sense of measure, then it is a Peano continuum.
\end{lem}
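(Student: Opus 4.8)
The plan is to deduce local connectedness from the weaker property of \emph{connectedness im kleinen} (cik), invoking the classical fact that a space which is cik at each of its points is locally connected (the components of open sets are then open). Thus it suffices to rule out a point at which $X$ fails to be cik. I will contradict such a failure by manufacturing, out of the failure, infinitely many pairwise disjoint subcontinua of diameter bounded below, which the measure hypothesis forbids.

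The first step is to extract from Definition~\ref{one dim} a uniform lower bound for the measure of ``large'' subcontinua: for every $\delta>0$ there is $m=m(\delta)>0$ such that every subcontinuum $Y$ with ${\rm diam}(Y)\ge\delta$ satisfies $\mu(Y)\ge m$. Indeed, were this false for some $\delta$, one could pick subcontinua $Y_n$ with ${\rm diam}(Y_n)\ge\delta$ and $\mu(Y_n)\to 0$; the defining property of $1$-dimension in the sense of measure would then force ${\rm diam}(Y_n)\to 0$, a contradiction. I also note that since $\mu$ is an atomless \emph{probability} measure, $X$ is not a single point, so ${\rm diam}(X)>0$.

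The heart of the argument is the following configuration. Assume $X$ fails to be cik at some point $p$. Then there are $\varepsilon_0>0$, which I may shrink so that $\varepsilon_0<2\,{\rm diam}(X)$, and a sequence $q_n\to p$ such that no connected subset of $X$ of diameter $<\varepsilon_0$ contains both $p$ and $q_n$. Put $r=\varepsilon_0/4$, so that ${\rm diam}\big(\overline{B(p,r)}\big)\le 2r<\varepsilon_0$, and, by the choice of $\varepsilon_0$, the open ball $U=B(p,r)$ is a proper nonempty open subset of $X$. For each large $n$ let $K_n$ be the connected component of $q_n$ in the compact set $\overline{B(p,r)}$; it is a subcontinuum. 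If $p\in K_n$, then $K_n$ would be a connected set of diameter $<\varepsilon_0$ containing both $p$ and $q_n$, contrary to the choice of $q_n$; hence $p\notin K_n$. Applying the Boundary Bumping Theorem (Theorem~\ref{Boundary bumping}) to $U$, the component $K_n$ of $\overline U=\overline{B(p,r)}$ meets $\partial_X(U)\subseteq\{x:d(x,p)=r\}$, so $K_n$ contains a point at distance $r$ from $p$; since $d(p,q_n)<r/2$ for large $n$, this gives ${\rm diam}(K_n)\ge r/2$.

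The crucial and, I expect, most delicate point—producing disjoint large continua from a failure of local connectedness—is resolved for free here: the $K_n$ are components of one fixed compact set, so any two are either equal or disjoint. Two cases then arise. If infinitely many $K_n$ are distinct, they form a pairwise disjoint family of subcontinua each of diameter $\ge r/2$, whence $\mu(K_n)\ge m(r/2)>0$ for infinitely many mutually disjoint Borel sets; summing, $\mu(X)=\infty$, contradicting $\mu(X)=1$. Otherwise only finitely many distinct components occur, so by pigeonhole a single component $K^\ast$ contains $q_n$ for infinitely many $n$; being closed it also contains $\lim q_n=p$, yet $K^\ast$ is connected of diameter $\le 2r<\varepsilon_0$ and contains both $p$ and these $q_n$, again contradicting the choice of $\varepsilon_0$. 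In either case we reach a contradiction, so $X$ is cik at every point and is therefore a Peano continuum.
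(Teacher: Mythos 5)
Your proof is correct: the uniform bound $m(\delta)$ follows correctly from Definition~\ref{one dim}, the components $K_n$ of the fixed compactum $\overline{B(p,r)}$ are indeed pairwise equal or disjoint, Theorem~\ref{Boundary bumping} legitimately forces ${\rm diam}(K_n)\geq r/2$ (your care in making $B(p,r)$ a proper open subset, using that $\mu$ atomless and $\mu(X)=1$ rule out a degenerate $X$, is exactly what that theorem requires), and the pigeonhole case is closed off by $p\in K^\ast$ contradicting the choice of $\varepsilon_0$ (or, just as well, your earlier observation that $p\notin K_n$). The paper reaches the same contradiction by a shorter route: it cites the classical fact (\cite[5.12]{Nad1}) that a non-locally-connected continuum contains a \emph{convergence continuum}, i.e.\ a nondegenerate subcontinuum $A$ together with pairwise disjoint subcontinua $A_n\rightarrow A$ with $A_n\cap A=\emptyset$; then pairwise disjointness and $\mu(X)=1$ give $\mu(A_n)\rightarrow 0$ directly, so the hypothesis forces ${\rm diam}(A_n)\rightarrow 0$, contradicting ${\rm diam}(A_n)\rightarrow{\rm diam}(A)>0$. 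Note that the paper needs no uniform bound $m(\delta)$, since the limit continuum $A$ itself witnesses the contradiction. What your version buys is self-containedness: you in effect reprove the portion of the convergence-continuum theorem you need, using only Theorem~\ref{Boundary bumping} (already stated in the paper) and the standard equivalence between connectedness im kleinen at every point and local connectedness, and your extraction of the uniform lower bound $m(\delta)$ is a clean quantitative restatement of Definition~\ref{one dim}. The cost is length and an extra case analysis; the underlying mechanism in both proofs is identical --- infinitely many pairwise disjoint subcontinua of diameter bounded below are incompatible with a finite measure satisfying the hypothesis of Definition~\ref{one dim}.
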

\begin{proof}
If $X$ is not a Peano continuum, then there exists a convergence continuum (\cite[5.12]{Nad1}), i.e., there is a nondegenerate subcontinuum $A$ and a sequence $(A_n)_{n=1}^{\infty}$ of mutually disjoint subcontinua $A_n$ such that
\[ A=\lim_{n\rightarrow\infty}A_n~~\qquad\text{  and   }~~~ \qquad A\cap A_n=\emptyset~~\text{ for each}~~n.\]
Let $\mu$ be a probability measure on $X$ such that for any sequence of subcontinua $(Y_n)$ of $X$ with $\mu(Y_n)\rightarrow 0$, we always have $\text{diam}(Y_n)\rightarrow 0$.
Since $A_n$'s are pairwise disjoint and $\mu(X)=1$, we have $\mu(A_n)\rightarrow 0$ as $n\rightarrow\infty$. Then ${\rm diam}(A_n)\rightarrow 0$ by the assumption on $\mu$,
which is a contradiction.
\end{proof}

It is well known that the topological dimension of Suslinian continua are one. However, we did not find a reference giving an explicit proof. For the convenience of the readers, we afford a sketched proof here.
\begin{lem}\label{dim of suslinian}
Every Suslinian continuum is of topological dimension $1$. 
\end{lem}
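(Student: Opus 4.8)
The plan is to prove the equivalent statement $\dim X \le 1$, since every nondegenerate continuum has topological dimension at least $1$ and hence this inequality forces $\dim X = 1$. I would argue by contraposition: assuming $\dim X \ge 2$, I will produce an uncountable family of pairwise disjoint nondegenerate subcontinua, contradicting that $X$ is Suslinian. The essential tool is the classical partition (separator) characterization of topological dimension in separable metric spaces (see Engelking, \emph{Dimension Theory}): one has $\dim X \le n$ if and only if every pair of disjoint closed sets $A,B$ admits a partition $L$ between them with $\dim L \le n-1$. Taking $n=1$ and negating, $\dim X \ge 2$ yields a pair of disjoint closed sets $A, B \subset X$ such that \emph{every} partition $L$ between $A$ and $B$ satisfies $\dim L \ge 1$.

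The construction then goes as follows. Fix a Urysohn function $f:X\to[0,1]$ with $f|_A \equiv 0$ and $f|_B\equiv 1$. For each $t\in(0,1)$ the level set $L_t=f^{-1}(t)$ is closed, and writing $X\setminus L_t=f^{-1}([0,t))\cup f^{-1}((t,1])$ as a union of two disjoint open sets containing $A$ and $B$ respectively, it is a partition between $A$ and $B$. By the previous paragraph $\dim L_t\ge 1$; since $L_t$ is compact metric, being of positive dimension it is not totally disconnected, so it has a component containing more than one point. That component is a nondegenerate subcontinuum $C_t\subseteq L_t$. Finally, the level sets $\{L_t\}_{t\in(0,1)}$ are pairwise disjoint, hence so are the $C_t$; thus $\{C_t : t\in(0,1)\}$ is an uncountable family of pairwise disjoint nondegenerate subcontinua of $X$, contradicting the Suslinian hypothesis. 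This proves $\dim X\le 1$.

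I expect the only genuinely delicate points to be the correct invocation of two standard facts from dimension theory, rather than any new construction: first, the partition theorem above, applied in the form that between a suitable pair $A,B$ every separator has dimension $\ge 1$; and second, the equivalence, valid for compact metric spaces, between zero-dimensionality and total disconnectedness, which is what converts ``$\dim L_t \ge 1$'' into the existence of the nondegenerate subcontinuum $C_t$. Both are classical, so the substance of the argument is the observation that the level sets of a single Urysohn function furnish a whole continuum of pairwise disjoint partitions simultaneously, which is precisely what is needed to overrun the Suslinian bound.
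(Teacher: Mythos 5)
Your proof is correct, and it takes a genuinely different route from the paper's. The paper argues directly rather than by contradiction with the dimension theory machinery: fixing $x\in X$, the sphere boundaries $\partial B(x,r)$ for $r\in(0,\operatorname{diam}(X)]$ are pairwise disjoint closed sets, so by the Suslinian property only countably many radii $r$ can have $\partial B(x,r)$ containing a nondegenerate component; for every other radius the boundary is totally disconnected, hence (being compact) zero-dimensional, and the corresponding balls form a neighborhood basis at $x$ with zero-dimensional boundaries. Applied at each point this verifies the small inductive dimension bound $\operatorname{ind}X\le 1$ outright. You instead contrapose through the partition characterization of dimension ($\dim X\le n$ iff every disjoint closed pair admits a partition of dimension $\le n-1$), replacing the distance spheres by the level sets of a Urysohn function attached to a bad pair $(A,B)$. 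Both arguments pivot on the same two facts --- an uncountable family of pairwise disjoint closed separators cannot all contain nondegenerate subcontinua, and a totally disconnected compact metric space is zero-dimensional --- and indeed the paper's spheres are exactly the level sets of the distance function $d(x,\cdot)$, so your construction is the natural abstraction of the same idea. What each buys: the paper's version is more self-contained, needing only the definition of inductive dimension (plus the standard coincidence $\operatorname{ind}=\dim$ for separable metric spaces), whereas yours invokes the full separator theorem (via $\dim=\operatorname{Ind}$ in the separable metric setting); in exchange, your single function $f$ does all the work at once, with no neighborhood basis to assemble and no need to quantify over all points of $X$.
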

\begin{proof}
Let $X$ be a Suslinian continuum with metric $d$ and fix $x\in X$.  Set
\[ E=\{r\in (0,\text{diam}(X)]:~\partial B(x,r) \text{ has a non-degenerate component}\}.\]
Then $E$ is countable by the definition of Suslinian continuum. Thus, $\{B(x,r):r\in (0,\text{diam}(X)]\setminus E\}$ forms a open neighborhood of $x$ with $0$-dimensional boundary. Hence $\text{dim}(X)\leq 1$. Note that the dimension of every non-degenerate continuum is no less than $1$. So $\text{dim}(X)=1$.
\end{proof}

Now we start to state and prove the main theorem in this section which gives a new characterization of totally regular curves.

\begin{thm}\label{total dim}
A continuum $(X,d)$ is totally regular if and only if it is $1$-dimensional in the sense of measure.
\end{thm}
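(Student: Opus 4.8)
The plan is to route both implications through the Eilenberg--Harrold criterion (Theorem \ref{finite meas}): a continuum is totally regular precisely when it admits a compatible metric of finite linear measure. The organizing observation is that total regularity is a purely topological property, whereas ``finite linear measure'' and ``$1$-dimensional in the sense of measure'' are stated in terms of a metric. Since on a compact space any two compatible metrics are uniformly equivalent, the condition $\mathrm{diam}(Y_n)\to 0$ is independent of the choice of compatible metric; I will exploit this freedom to change metrics at will, so that the measure witnessing ``$1$-dimensional in the sense of measure'' need not be built from the given $d$.

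For the forward implication I would assume $X$ totally regular and use Theorem \ref{finite meas} to fix a compatible metric $d'$ with $L^1(X,d')<\infty$. Applying Theorem \ref{metric outermeas} to the premeasure $C\mapsto \mathrm{diam}_{d'}(C)$ produces the $1$-dimensional Hausdorff measure $\mathcal H^1_{d'}$, a Borel measure whose total value equals $L^1(X,d')\in(0,\infty)$; I normalize it to $\mu=\mathcal H^1_{d'}/\mathcal H^1_{d'}(X)$, which is atomless because $\mathcal H^1_{d'}(\{x\})=0$. The one nontrivial input is the classical estimate $\mathcal H^1_{d'}(Y)\ge \mathrm{diam}_{d'}(Y)$ for connected $Y$: fixing $a\in Y$, the $1$-Lipschitz map $x\mapsto d'(a,x)$ carries $Y$ onto an interval containing $[0,\mathrm{diam}_{d'}(Y)]$, and $\mathcal H^1$ does not increase under $1$-Lipschitz maps. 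Hence $\mu(Y_n)\to 0$ forces $\mathrm{diam}_{d'}(Y_n)\to 0$, and then $\mathrm{diam}_d(Y_n)\to 0$ by uniform equivalence, so $X$ is $1$-dimensional in the sense of measure.

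For the converse, suppose $\mu$ witnesses that $X$ is $1$-dimensional in the sense of measure. By Lemma \ref{1-dim} the space $X$ is Peano. I first observe that every nondegenerate subcontinuum $Y$ has $\mu(Y)>0$, since otherwise the constant sequence $Y_n\equiv Y$ would violate the defining condition; consequently any family of pairwise disjoint nondegenerate subcontinua is countable, $X$ is Suslinian, and $\dim X=1$ by Lemma \ref{dim of suslinian}. The heart of the argument is to convert $\mu$ into a compatible length-type metric by setting
\[
 d'(x,y)=\inf\{\mu(K): K\in C(X),\ x,y\in K\}.
\]
Arcwise connectedness makes the infimum nonempty, symmetry is clear, and the triangle inequality holds because the union of two subcontinua sharing a point is again a subcontinuum of measure at most the sum. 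With $\theta(\eta)=\inf\{\mu(K):K\in C(X),\ \mathrm{diam}_d(K)\ge\eta\}>0$ (positivity from the contraction hypothesis), one gets $d'(x,y)\ge\theta(d(x,y))>0$ for $x\ne y$, so $d'$ is a genuine metric.

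To finish I would check that $d'$ is compatible with $d$ and then bound $L^1(X,d')$. Compatibility rests on two uniform estimates: $d'(x_n,y_n)\to 0$ forces $d(x_n,y_n)\to 0$ through $d'\ge\theta\circ d$, while conversely, by uniform local arcwise connectedness (Lemma \ref{ULAC}) nearby points are joined by arcs of small $d$-diameter, which have small $\mu$-measure by Lemma \ref{atomless}, giving $d'(x_n,y_n)\to 0$. The decisive inequality $\mathrm{diam}_{d'}(K)\le\mu(K)$ for every subcontinuum $K$ is then immediate, since $K$ itself joins any two of its points. For the bound, I fix $\delta>0$, take a finite cover of $X$ by connected open sets of $\mu$-measure $<\delta$ with $\mu$-null boundaries (readily arranged since $\mu$ is finite), refine it to an open cover of order at most $2$ using $\dim X=1$, and pass to the closures $C_1,\dots,C_m$ of the components; these are subcontinua with $\mathrm{diam}_{d'}(C_l)\le\mu(C_l)<\delta$ and $\sum_l\mu(C_l)\le 2\mu(X)=2$, so $L^1_\delta(X,d')\le 2$ for all $\delta$ and $L^1(X,d')\le 2<\infty$, whence $X$ is totally regular by Theorem \ref{finite meas}. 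The main obstacle is exactly this reverse direction: the idea that the measure can be turned into a compatible metric $d'$ with $\mathrm{diam}_{d'}(K)\le\mu(K)$, and the verification that $d'$ generates the original topology (where Lemmas \ref{ULAC}, \ref{atomless} and the modulus $\theta$ all enter); the passage from this pointwise bound to a global bound on $L^1(X,d')$ via an order-$2$ refinement is the remaining delicate, though routine, point.
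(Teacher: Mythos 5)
Your backward implication reproduces the paper's own route almost verbatim --- the same metric $d'(x,y)=\inf\{\mu(K)\}$ (the paper's $\rho$), the same compatibility checks via a modulus together with Lemmas \ref{ULAC} and \ref{atomless}, the same key inequality $\mathrm{diam}_{d'}(K)\leq\mu(K)$, an order-$2$ cover, the bound $L^1(X,d')\leq 2$, and Theorem \ref{finite meas} --- but your final covering step fails as written. After refining to an open cover $\{V_l\}$ of order at most $2$, you pass to the closures $C_l=\overline{V_l}$ of the components and assert $\sum_l\mu(C_l)\leq 2\mu(X)$. Emptiness of triple intersections is a property of the \emph{open} sets only: three closures can share boundary points (picture a triod, with three open sets meeting only in pairs but all accumulating at the branch point), so $\sum_l\mathbf{1}_{C_l}\leq 2$ can fail on $\bigcup_l\partial_X(V_l)$. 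The $\mu$-null boundaries you arranged for the initial cover do not survive the refinement: the order-$2$ cover is produced abstractly from $\dim X=1$, with no measure control on its boundaries, and $\partial_X(V_l)$ is contained in the boundary of the \emph{refined} set, not of the original one. Hence $\sum_l\mu(C_l)\leq 2$ is unjustified. The repair is exactly the paper's device: instead of taking closures, take a closed shrinking $F_l\subseteq V_l$ still covering $X$ (normality), then use Lemma \ref{peano} to fatten each $F_l$ to a subcontinuum $K_l$ with $F_l\subseteq K_l\subseteq V_l$. Since $K_l\subseteq V_l$, triple intersections of the $K_l$ are genuinely empty, and with $\hat K_l=K_l\setminus\bigcup_{m\neq l}K_m$ one gets $\sum_l\mu(K_l)=\sum_l\mu(\hat K_l)+\sum_{l\neq m}\mu(K_l\cap K_m)\leq 2$ exactly. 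Everything else in your argument then goes through; in fact your choice to demand $\mu(V_l)<\delta$ directly, so that $\mathrm{diam}_{d'}(K_l)\leq\mu(K_l)<\delta$, gives the $d'$-mesh condition for $L^1_\delta(X,d')$ more cleanly than the paper's mesh-in-$d$ formulation.

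Your forward implication, by contrast, is correct and genuinely different from the paper's. The paper constructs its measure from the Whitney-map premeasure via the Carath\'eodory construction of Theorem \ref{metric outermeas}, passes to a compatible metric with connected open balls \cite[8.50]{Nad1} to compare $\mu(X)$ with $2L^1(X,d)$, and gets the contraction property from $\mu(C)\geq\tau(C)$ (Lemmas \ref{size function}, \ref{meas and size}, \ref{general 0}). You instead fix an Eilenberg--Harrold metric $d'$ with $L^1(X,d')<\infty$ and take the normalized Hausdorff measure $\mathcal{H}^1_{d'}$, using the classical estimate $\mathcal{H}^1_{d'}(Y)\geq\mathrm{diam}_{d'}(Y)$ for subcontinua via the $1$-Lipschitz map $x\mapsto d'(a,x)$ (one should pick $a$ in a diametral pair of $Y$, which exists by compactness, so that the connected image contains $[0,\mathrm{diam}_{d'}(Y)]$), together with the observation that the contraction condition is metric-independent on a compact space. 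This buys a shorter and more standard construction than the paper's size-function machinery, at the cost of importing the basic theory of $\mathcal{H}^1$; both are valid.
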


\begin{proof}($\Longleftarrow$)
Suppose that $(X,d)$ is $1$-dimensional in the sense of measure, then there is an atomless Borel probability measure $\mu$
on $X$ such that for any sequence of subcontinua $(Y_n)$ of $X$ with $\mu(Y_n)\rightarrow 0$, we have $\text{diam}(Y_n)\rightarrow 0$.
Define a function $\rho: X\times X\rightarrow \mathbb{R}$ by
\[\rho(x,y)=\inf\{\mu(K):~K ~~\text{is a subcontinuum of}~~X~~\text{containing }~~ x~~\text{and}~~ y\}.\]

\medskip
\noindent\textbf{Claim 1}. $\rho$ is a metric on $X$. \\

It is clear that we only need to show the triangle inequality and the requirement that $\rho(x,y)=0$ implies $x=y$.

Assume that $\rho(x,y)=0$. Then there is a sequence $(K_n)$ of subcontinua of $X$ containing $x$ and $y$ such that $\lim_{n\rightarrow\infty}\mu(K_n)=0$. By the assumption on $\mu$, we have $\lim_{n\rightarrow\infty}\text{diam}(K_n)=0$. Thus $d(x,y)=0$, and hence $x=y$.

For any $x,y,z\in X$ and any $\varepsilon>0$, there exist subcontinua $K_{x,y}$ and $K_{y,z}$ of $X$ containing $\{x,y\}$ and $\{y,z\}$ respectively such that
\[\mu(K_{x,y})<\rho(x,y)+\frac{\varepsilon}{2},~~\text{and}~~~\mu(K_{y,z})<\rho(y,z)+\frac{\varepsilon}{2}.\]
Then $K\equiv K_{x,y}\cup K_{y,z}$ is a subcontinuum of $X$ containing $x$ and $z$ such that
\[\rho(x,z)\leq\mu(K)\leq \mu(K_{x,y})+\mu(K_{y,z})\leq \rho(x,y)+\rho(y,z)+\varepsilon.\]
Since $\varepsilon $ is arbitrary, we have $\rho(x,z)\leq\rho(x,y)+\rho(y,z)$. Thus  $\rho$ satisfies the triangle inequality.\\

\noindent \textbf{Claim 2}. The metric $\rho$ is compatible with $d$. \\

We need to show that $\lim_{n\rightarrow\infty}d(x_n,x)=0$ is equivalent to $\lim_{n\rightarrow\infty}\rho(x_n,x)=0$ for any sequence $(x_n)$ of $X$.

On the one hand, suppose $\lim_{n\rightarrow\infty}\rho(x_n,x)=0$. Then for any $\varepsilon>0$, there exists a positive integer $N$ such that for any $n\geq N$, $\rho(x_n,x)<\frac{\varepsilon}{2}$. By the definition of $\rho$, for each $n\geq N$, there is a subcontinuum $K_n$ containing $x_n$ and $x$ such that $\text{diam}(K_n)\leq \rho(x_n,x)+\frac{\varepsilon}{2}<\varepsilon$. Thus
\[d(x_n,x)\leq \text{diam}(K_n)<\varepsilon.\]
 Hence $\lim_{n\rightarrow\infty}d(x_n,x)=0$.

On the other hand, suppose $\lim_{n\rightarrow\infty}d(x_n,x)=0$. Lemma \ref{1-dim} implies that $X$ is a Peano continuum. Fix $\varepsilon>0$. Then by Lemma \ref{ULAC}, there exists $\delta>0$ such that if $d(x,y)<\delta$ and $x\neq y$, then there is an arc $A\subset X$ such that $A$ has end points $x$ and $y$ and $\text{diam}(A)<\varepsilon$. Since $d(x_n,x)\rightarrow 0$, there exists $N$ such that for any $n\geq N$, $d(x_n,x)<\delta$. Therefore, for any $n\geq N$, there is an  arc $A_n$ connecting $x_n$ and $x$ and $\text{diam}(A_n)<\varepsilon$. Thus there exists a sequence $(K_n)$ of subcontinua of $X$ containing $x_n$ and $x$ satisfying $\lim_{n\rightarrow\infty}\text{diam}(K_n)=0$. By Lemma \ref{atomless},  $\lim_{n\rightarrow\infty}\mu(K_n)=0$. By the definition of $\rho$, we have $\rho(x_n,x)\leq \mu(K_n)$ for each $n$. Thus $\lim_{n\rightarrow\infty}\rho(x_n,x)=0$. \\

\noindent\textbf{Claim 3}. $X$ is of dimension $1$.\\

 By the definition of $\rho$, it is clear that $\text{diam}_{\rho}(K)\leq \mu(K)$, for any subcontinuum $K$ of $X$. Thus there are at most countable many pairwise disjoint nondegenerate subcontinua in $X$; otherwise $\mu(X)=\infty$, which is a contradiction. Then $X$ is Suslinian and hence has dimension $1$ by Lemma \ref{dim of suslinian}.\\

From Claim 3,  for any $\varepsilon>0$, there is a finite open cover $\mathcal{U}=\{U_1,\cdots,U_n\}$ with $\text{mesh}(\mathcal{U})<\varepsilon$ and $\text{ord}(\mathcal{U})\leq 1$ (Here $\text{ord}(\mathcal{U})=-1+\sup_{x\in X}|\{i\in\{1,\cdots,n\}: x\in U_i\}|$). For each $i\in\{1,\cdots,n\}$, let $\mathcal{U}_i$ be the set of connected components of $U_i$. By the local connectivity, the connected components of an open set are all open. Then $\bigcup_{i=1}^n \mathcal{U}_i$ is an open cover of $X$. By the compactness of $X$, there is a finite subcover $\mathcal{V}=\{V_1,V_2,\cdots, V_m\}$ of $X$. Moreover, $\text{mesh}(\mathcal{V})<\varepsilon$ and $V_i\cap V_j\cap V_k=\emptyset$ for any distinct $i,j,k\in\{1,\cdots,m\}$.

For each $i\in\{1,\cdots,m\}$, we can choose a closed subset $F_i$ contained in $V_i$ such that
$X=F_1\cup F_2\cup\cdots \cup F_m$ and $F_i\cap F_j\cap F_k=\emptyset$ for any distinct $i,j,k\in\{1,\cdots,m\}$. By Lemma \ref{peano}, there exists a subcontinuum $K_i$ satisfying $F_i\subset K_i\subset V_i$ for each $i$. It is clear that $\mathcal{K}\equiv\{K_i:i=1,\cdots,m\}$ also satisfy
\[X=K_1\cup K_2\cup\cdots \cup K_m,~~\text{mesh}(\mathcal{K})<\varepsilon,~~\text{and}~~K_i\cap K_j\cap K_k=\emptyset, \eqno{(\rm i)}\]
for any distinct $i,j,k\in\{1,\cdots,m\}$.
Set $\hat{K_i}=K_i\setminus(\cup_{j\neq i}K_j)$ for each $i$. Then, by ({\rm i}),
\begin{equation}\label{eq1}
\mu(K_i)=\mu(\hat{K_i})+\sum_{j\neq i}\mu(K_i\cap K_j).
\end{equation}
Note that $\text{diam}_{\rho}(K_i)\leq \mu(K_i)$ for each $i$. By (\ref{eq1}) and the definitions in Section 2.1,
\begin{eqnarray*}
L_{\varepsilon}^1(X,\rho)&\leq& \sum_{i=1}^m\text{diam}_{\rho}(K_i)\leq \sum_{i=1}^m\mu(K_i)\\
&=&\sum_{i=1}^m\left(\mu(\hat{K_i})+\sum_{j\neq i}\mu(K_i\cap K_j)\right)\\
&=& \sum_{i=1}^m\mu(\hat{K_i})+\sum_{i=1}^m\sum_{j\neq i}\mu(K_i\cap K_j)\\
&=& \mu\left(\cup_{i=1}^m \hat{K_i} \right) +\mu(\cup_{1\leq i<j\leq m}K_i\cap K_j )\\
&\leq&2.
\end{eqnarray*}
Since $\varepsilon$ is arbitrary, we have $L^1(X,\rho)\leq 2$. By Theorem \ref{finite meas}, $X$ is totally regular.

\medskip

$(\Longrightarrow)$ Now assume that $X$ is a totally regular curve. We may assume that the metric $d$ on $X$ is such that all open balls are connected (see \cite[8.50]{Nad1}). We claim  that the measure $\mu$ defined in (\ref{meas mu}) is finite.

Let $A$ be a subset of $X$. By the definitions in Section 2.2, for each positive integer $i$,
\begin{eqnarray*}
\tau_i(A)&=&\text{diam}(f_i(A))=\sup_{x,y\in A}\left|\frac{1}{1+d(x, x_i)}- \frac{1}{1+d(y, x_i)}\right|\\
&=& \sup_{x,y\in A}\left|\frac{d(x, x_i)-d(y, x_i)}{(1+d(x, x_i))(1+d(y, x_i))}\right|\\
&\leq &\sup_{x,y\in A}d(x, y)
\leq \text{diam}(A).
\end{eqnarray*}
Thus
\[\tau(A)=\sum_{i=1}^{\infty}\frac{1}{2^{i}}\tau_{i}(A)\leq \text{diam}(A).\]

For every $\delta>0$, let $X=X_1\cup X_2\cup X_3\cup\cdots$ be a decomposition  of $X$ with $\text{diam}(X_i)<\delta$ for each $i$. Note that the metric $d$ is such that all open balls are connected. Then each $X_i$ is contained in a connected closed ball $B_i$ with $\text{diam}(B_i)\leq 2\text{diam}(X_i)$. Thus
\begin{equation*}
\inf\left\{\sum_{i=1}^{\infty}\tau(C_i):~C_i\in\mathcal{C}, \rm{diam}(C_i)\leq \delta, E\subseteq\bigcup_{i=1}^\infty C_i\right\}\leq 2L^1_{\delta/2}(X,d).
\end{equation*}
Therefore
\begin{eqnarray*}
\mu(X)&=&\sup_{\delta>0}\mu_{\delta}(X)=\lim_{\delta\rightarrow 0^+}\mu_{\delta}(X)\\
&=&\lim_{\delta\rightarrow 0^+}\inf\left\{\sum_{i=1}^{\infty}\tau(C_i):~C_i\in\mathcal{C}, \rm{diam}(C_i)\leq \delta, E\subseteq\bigcup_{i=1}^\infty C_i\right\}\\
&\leq&\lim_{\delta\rightarrow 0^+}\inf\left\{\sum_{i=1}^{\infty}\text{diam}(C_i):~C_i\in\mathcal{C}, \rm{diam}(C_i)\leq \delta, E\subseteq\bigcup_{i=1}^\infty C_i\right\}\\
&\leq& \lim_{\delta\rightarrow 0^+}2L^1_{\delta/2}(X,d)\\
&=&2L^{1}(X,d)<\infty.
\end{eqnarray*}
This together with Lemma \ref{atomless of mu} and Lemma \ref{general 0} implies that $X$ is of $1$-dimension in the sense of measure.

\end{proof}

\section{An escaping lemma for minimal actions}

Let $G$ be a countable group. A subset $S$ of $G$ is said
to be {\it syndetic} if there is a finite set $F\subset G$ such that $G=FS$. Suppose $X$ is a compact metric space with metric $d$ and $G\curvearrowright X$ is a continuous action.
Then a point $x\in X$ is said to be {\it almost periodic} if for every neighborhood $U$ of $x$, the set $R(x, U)\equiv \{g\in G: gx\in U\}$ is syndetic.
The following theorem is well known (see e.g. \cite[Ch.1-Theorem 1]{Auslander}).

\begin{thm}\label{almost periodic}
Let $G\curvearrowright X$ be an action of a countable group $G$ on a compact metric space $X$. Then $x\in X$ is almost periodic if and only if
$\overline{Gx}$ is a minimal set.
\end{thm}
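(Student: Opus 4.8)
The plan is to prove both implications directly from the definitions, using the standard reformulation of minimality: a closed $G$-invariant set $M$ is a minimal set exactly when every $G$-orbit in $M$ is dense, equivalently when every nonempty relatively open subset of $M$ is met by every orbit. First I would record the structural fact that $M:=\overline{Gx}$ is a closed $G$-invariant subset of the compact space $X$, hence itself compact; this is what makes the phrase ``$M$ is a minimal set'' meaningful and is what allows the compactness argument below to run.

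For the implication that minimality of $M$ forces $x$ to be almost periodic, I would fix an open neighborhood $U$ of $x$ and set $W=U\cap M$, a nonempty relatively open subset of $M$. By minimality every $z\in M$ has dense orbit, so some $g\in G$ satisfies $gz\in W$; thus $\{g^{-1}W:g\in G\}$ is an open cover of $M$. Using compactness of $M$ I would extract a finite subcover $M=\bigcup_{i=1}^{k}g_i^{-1}W$. Then for an arbitrary $h\in G$ the point $hx$ lies in some $g_i^{-1}W$, so $g_ihx\in W\subseteq U$, i.e. $g_ih\in R(x,U)$ and therefore $h\in g_i^{-1}R(x,U)$. Hence $G=\{g_1^{-1},\dots,g_k^{-1}\}\,R(x,U)$, which exhibits $R(x,U)$ as syndetic; as $U$ was arbitrary, $x$ is almost periodic.

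For the converse I would show that almost periodicity of $x$ makes every orbit in $M$ dense. Given a nonempty relatively open $W_0\subseteq M$, density of $Gx$ lets me translate so as to reduce to the case where the target contains $x$: pick $g_0$ with $g_0x\in W_0$ and work with $W=g_0^{-1}W_0\ni x$. Choosing nested open sets $x\in U_0\subseteq\overline{U_0}\subseteq U$ in $X$ with $U\cap M\subseteq W$, almost periodicity supplies a finite $F=\{f_1,\dots,f_k\}$ with $G=F\,R(x,U_0)$; writing $g=f_is$ with $sx\in U_0$ gives $gx=f_isx\in f_iU_0$, so $Gx\subseteq\bigcup_i f_iU_0$, and taking closures $M=\overline{Gx}\subseteq\bigcup_i f_i\overline{U_0}\subseteq\bigcup_i f_iU$. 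Consequently every $z\in M$ satisfies $f_i^{-1}z\in U\cap M\subseteq W$ for some $i$, so the orbit of $z$ meets $W$ and hence meets $W_0$; thus every orbit is dense and $M$ is minimal.

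The step I expect to be most delicate is the closure manipulation in the converse: passing from the cover $Gx\subseteq\bigcup_i f_iU_0$ of the orbit to a cover of all of $M=\overline{Gx}$ while retaining an \emph{open} target set. This is precisely why I insert the nested neighborhoods $U_0\subseteq\overline{U_0}\subseteq U$, so that closing up $f_iU_0$ stays inside the open $f_iU$. The remaining ingredients are routine: $M$ is automatically closed and $G$-invariant, each $f_i$ is a homeomorphism and so commutes with closure and with finite unions, and the initial translation reduction uses only density of the orbit $Gx$ in $M$.
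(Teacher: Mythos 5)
Your proof is correct and complete; the paper does not prove this statement itself but quotes it as classical with a reference to Auslander (Ch.~1, Theorem~1), and your argument---the compactness/finite-subcover derivation of syndeticity in one direction, and the syndetic cover $G=FR(x,U_0)$ with nested neighborhoods $U_0\subseteq\overline{U_0}\subseteq U$ to pass from $Gx\subseteq\bigcup_i f_iU_0$ to $M\subseteq\bigcup_i f_iU$ in the other---is exactly the standard proof found in that source. In particular the one genuinely delicate point, keeping an open target after taking closures of the orbit cover, is handled correctly by your insertion of the intermediate neighborhood $U_0$.
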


For $x\not= y\in X$, if there is a sequence $(g_n)$ in $G$ such that $d(g_nx, g_ny)\rightarrow 0$, then $x$ and $y$ are said to be {\it proximal}.
The following theorem can be seen in \cite[Ch.5-Theorem 3]{Auslander}.

\begin{thm}\label{prox almost}
Let $G\curvearrowright X$ be an action of a  group $G$ on a compact metric space $X$. Then for every $x\in X$, there is an almost periodic point
$x^*$ which is proximal to $x$.
\end{thm}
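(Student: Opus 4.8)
The plan is to build the Ellis (enveloping) semigroup of the action and extract an idempotent from a minimal left ideal. Let $E=E(X)$ be the closure of $\phi(G)$ in the product space $X^X$ equipped with the topology of pointwise convergence. By Tychonoff's theorem $X^X$ is compact, and one checks that $E$ is closed under the composition inherited from $X^X$, so $E$ is a compact subsemigroup of $(X^X,\circ)$; since for each fixed $p\in E$ the right multiplication $q\mapsto q\circ p$ is continuous, $E$ is a compact \emph{right-topological} semigroup. Its elements are exactly the pointwise limits of nets $(\phi(g_\alpha))$, and for every $y\in X$ one has $\overline{Gy}=Ey$, because the evaluation map $p\mapsto p(y)$ is continuous and carries the compact set $E=\overline{\phi(G)}$ onto a compact (hence closed) set lying between $Gy$ and $\overline{Gy}$.

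First I would invoke the basic structure theory of compact right-topological semigroups (Ellis--Namakura): $E$ contains a minimal left ideal $L$, and every minimal left ideal contains an idempotent. Fix such an $L$ and an idempotent $u=u\circ u\in L$, and set $x^*=u(x)$. If $x$ happens to be almost periodic one simply takes $x^*=x$; otherwise the construction forces $x^*\neq x$, since $x^*=u(x)=x$ would already make $x$ almost periodic, so the constraint $x^*\neq x$ in the definition of proximality is met.

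Next I would verify proximality. Writing $u=\lim_\alpha \phi(g_\alpha)$ pointwise, we get $g_\alpha x\to u(x)=x^*$ and, using $u\circ u=u$, also $g_\alpha x^*\to u(x^*)=u(u(x))=u(x)=x^*$, so $d(g_\alpha x,\,g_\alpha x^*)\to 0$ along the net. In particular $\inf_{g\in G}d(gx,gx^*)=0$, and since $X$ is metric I can choose for each $n$ some $g_n\in G$ with $d(g_n x,g_n x^*)<1/n$; this \emph{sequence} $(g_n)$ witnesses that $x^*$ is proximal to $x$ in the sense defined above. For the almost periodicity I would show $\overline{Gx^*}$ is minimal and apply Theorem \ref{almost periodic}. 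The algebra of $L$ drives this: since $u\in L$ is idempotent, $Lu$ is a nonempty left ideal contained in $L$, hence $Lu=L$ by minimality, and therefore $\overline{Gx^*}=Ex^*=E(ux)=(Eu)x\subseteq Lx=(Lu)x=L(ux)=Lx^*\subseteq Ex^*$, giving $\overline{Gx^*}=Lx^*$. Now for an arbitrary $y\in\overline{Gx^*}=Lx^*$, write $y=q(x^*)$ with $q\in L$; then $Eq$ is a nonempty left ideal contained in $L$, so $Eq=L\ni u$, whence $x^*=u(x^*)\in (Eq)x^*=E\,y=\overline{Gy}$. Thus $x^*$ lies in the orbit closure of every point of $\overline{Gx^*}$, so $\overline{Gx^*}$ is minimal and $x^*$ is almost periodic, as desired.

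The main obstacle is the algebraic input, namely the existence of a minimal left ideal of $E$ and of an idempotent inside it; this is the Ellis--Namakura machinery, proved by a Zorn's-lemma argument that exploits the compactness and right-topological structure of $E$ rather than any metric feature of $X$. A secondary technical point, which I would handle explicitly, is that for uncountable $G$ the semigroup $E$ need not be metrizable, so all limits above must be taken along nets; the passage back to a genuine sequence is exactly where the metrizability of $X$ is used, and it is needed to match the sequential definition of proximality.
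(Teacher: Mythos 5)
Your proposal is correct, and it is essentially the proof of the source the paper itself cites for this statement: the paper gives no proof of Theorem \ref{prox almost}, referring instead to Auslander \cite{Auslander}, where the result is established by exactly this enveloping-semigroup route (a minimal ideal of $E(X)$, an idempotent $u$ in it, $x^*=u(x)$, with proximality from $u(x^*)=u(x)$ and almost periodicity from the minimal-ideal algebra). Your treatment of the two side issues — extracting a genuine sequence from the net using metrizability of $X$, and the degenerate case $u(x)=x$ forced by the paper's definition of proximality for distinct points — is also sound, so there is nothing to correct.
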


\begin{lem}\label{prox lemma}
Let $G$ be a group acting on a compact metric space $X$. Suppose $x_1, x_2,..., x_n$ are $n$ points in $X$. Then there are
$y_1, y_2,..., y_k$ in $X$ with $1\leq k\leq n$ and $(g_i)$ in $G$ such that $g_i\{x_1,x_2,...,x_n\}\rightarrow \{y_1,y_2,.., y_k\}$
and for any $p\not=q$, $y_p$ and $y_q$ are not proximal.
\end{lem}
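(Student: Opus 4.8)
The plan is to work inside the hyperspace $(2^X,d_H)$ and to extract the desired set $\{y_1,\dots,y_k\}$ as a Hausdorff limit of translates of $A:=\{x_1,\dots,x_n\}$ having the \emph{smallest possible} cardinality. First I would record the elementary stability fact that a Hausdorff limit of sets of cardinality at most $n$ again has cardinality at most $n$: if the limit contained $n+1$ pairwise $3r$-separated points, then every sufficiently close approximating set would contain $n+1$ distinct points, one within $r$ of each. Consequently the collection
\[
\mathcal{L}=\{\,B\in 2^X : B=\lim_{i\to\infty} g_iA \text{ in } d_H \text{ for some sequence } (g_i) \text{ in } G\,\}
\]
is nonempty (it contains $A$, taking $g_i=e$) and consists entirely of finite sets of cardinality between $1$ and $n$. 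I would then let $k$ be the least cardinality occurring in $\mathcal{L}$ and fix $B=\{y_1,\dots,y_k\}\in\mathcal{L}$ realizing it, say with $g_iA\to B$.

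The heart of the matter is to show that this minimal limit has pairwise non-proximal points. Arguing by contradiction, suppose $y_1$ and $y_2$ are proximal, so there is $(h_j)$ in $G$ with $d(h_jy_1,h_jy_2)\to 0$. Using compactness of $2^X$ and passing to a subsequence I may assume $h_jB\to B'$ for some finite $B'$, and since $B$ is finite I may further assume $h_jy_r\to z_r$ for each $r$, so $B'=\{z_1,\dots,z_k\}$. The proximality of $y_1,y_2$ forces $z_1=z_2$, whence $|B'|\le k-1$. It then suffices to realize $B'$ itself as a Hausdorff limit of translates of $A$, which would contradict the minimality of $k$ and thereby finish the proof.

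The step I expect to be the \emph{main obstacle} is precisely this last reduction: producing a single sequence of group elements carrying $A$ to $B'$ out of the two given sequences $(g_i)$ and $(h_j)$. Here I would diagonalize, using that each $h_j$ is a homeomorphism of the compact space $X$ and hence uniformly continuous. For fixed $j$, uniform continuity of $h_j$ together with $g_iA\to B$ lets me choose an index $i(j)$ with $d_H(h_jg_{i(j)}A,\,h_jB)<1/j$; setting $f_j:=h_jg_{i(j)}$ and combining with $h_jB\to B'$ through the triangle inequality for $d_H$ gives $f_jA\to B'$, so $B'\in\mathcal{L}$ with $|B'|<k$, the desired contradiction. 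This shows the $y_p$ are pairwise non-proximal, and since $g_iA\to\{y_1,\dots,y_k\}$ with $1\le k\le n$ by construction, the lemma follows. I remark that this argument rests only on the compactness of $2^X$ and the uniform continuity of homeomorphisms of $X$, and does not invoke the almost-periodicity results recalled just above.
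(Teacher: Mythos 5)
Your proposal is correct and follows essentially the same route as the paper, which likewise takes $k$ to be the minimal cardinality of a Hausdorff limit of translates $g_i\{x_1,\dots,x_n\}$ and asserts that minimality forces pairwise non-proximality. The paper states this in two lines without detail; your cardinality-stability observation and the diagonal argument (choosing $i(j)$ via uniform continuity of $h_j$ so that $h_jg_{i(j)}A\to B'$) correctly supply exactly the verification the paper leaves implicit.
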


\begin{proof}
Take $k$ to be the minimal positive integer such that there are $y_1, y_2,..., y_k$ and a sequence $(g_i)$ in $G$ with $g_i\{x_1,x_2,...,x_n\}\rightarrow \{y_1,y_2,.., y_k\}$.
Then for any $p\not=q$, $y_p$ and $y_q$ are not proximal.
\end{proof}

Now we state and prove the escaping lemma, which is key in constructing a ping-pong-game in next section.

\begin{lem}[Escaping lemma]\label{escaping lemma}
Let $X$ be an infinite compact metric space and let a countable group $G$ act on $X$ minimally. Then for any countable infinite set
$C\subset X$ and any finite set $F\subset X$, there always exit a sequence $(g_n)$ in $G$ and a finite set $K$ in $X$ such that
$g_nF\rightarrow K$ and $K\cap C=\emptyset$.
\end{lem}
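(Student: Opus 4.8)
The plan is to reduce to a tuple of pairwise non-proximal points, pass to a minimal subset of its orbit closure inside the product $X^n$, and then run a Baire category argument that escapes the countable set $C$ in every coordinate simultaneously; a final diagonalization recombines everything into one sequence.

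First I would apply Lemma~\ref{prox lemma} to $F=\{x_1,\dots,x_n\}$ to obtain points $y_1,\dots,y_k$ ($1\le k\le n$), pairwise non-proximal, together with a sequence $(g_i)$ in $G$ such that $g_iF\to Y:=\{y_1,\dots,y_k\}$ in the Hausdorff metric. Non-proximality gives a uniform gap: setting $\delta=\min_{p\ne q}\inf_{g\in G}d(gy_p,gy_q)$, we have $\delta>0$ and $d(gy_p,gy_q)\ge\delta$ for all $g\in G$ and all $p\ne q$. Form the tuple $\bar y=(y_1,\dots,y_k)\in X^k$, let $G$ act diagonally, and put $Z=\overline{G\bar y}\subseteq X^k$. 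By the gap, every tuple in $Z$ has pairwise $\delta$-separated coordinates, so limits of orbit points of $\bar y$ are genuine $k$-point sets. Choose, by Zorn's lemma, a minimal subset $M\subseteq Z$ for the diagonal action. Since $X$ is minimal and each projection $\pi_j\colon X^k\to X$ is continuous and equivariant, $\pi_j(M)$ is a nonempty closed $G$-invariant subset of $X$, hence $\pi_j(M)=X$; in particular $M$ is infinite.

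The crux, and the step I expect to be the main obstacle, is to produce one tuple in $M$ all of whose coordinates avoid $C$. I would show that for each $c\in C$ and each $j$ the fiber $\pi_j^{-1}(c)\cap M$ is nowhere dense in $M$. It is closed, so it suffices to rule out interior points; if it contained a nonempty open set $O\subseteq M$, then by minimality of the action on $M$ finitely many translates $g_1O,\dots,g_rO$ would cover $M$, and since $\pi_j(g_\ell O)=\{g_\ell c\}$ this would force $\pi_j(M)$ to be finite, contradicting $\pi_j(M)=X$. Hence $B:=\bigcup_{j=1}^{k}\bigcup_{c\in C}\bigl(\pi_j^{-1}(c)\cap M\bigr)$ is meager in the nonempty compact (thus Baire) space $M$, and the Baire category theorem yields a point $\bar w=(w_1,\dots,w_k)\in M\setminus B$, that is, $w_j\notin C$ for every $j$.

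Finally I would recombine. As $\bar w\in Z=\overline{G\bar y}$, there is a sequence $(h_m)$ in $G$ with $h_m\bar y\to\bar w$, hence $h_mY\to K:=\{w_1,\dots,w_k\}$ in $2^X$, where $K$ is a $k$-point set disjoint from $C$. For fixed $m$ the homeomorphism $h_m$ acts continuously on $2^X$, so $g_iF\to Y$ gives $h_mg_iF\to h_mY$ as $i\to\infty$; choosing $i(m)$ with $d_H(h_mg_{i(m)}F,h_mY)<1/m$ and setting $p_m=h_mg_{i(m)}$ yields $p_mF\to K$ by the triangle inequality for $d_H$. Then $(p_m)$ and $K$ are as required. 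I note that Lemma~\ref{prox lemma} is needed only to make $K$ a clean $k$-point set; applying the same orbit-closure and Baire argument directly to the tuple $(x_1,\dots,x_n)$ already produces a finite set $K$ with $K\cap C=\emptyset$.
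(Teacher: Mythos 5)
Your proof is correct, and it takes a genuinely different route at exactly the step you identified as the crux. The paper also reduces to a pairwise non-proximal tuple via Lemma \ref{prox lemma} and works in the product $X^k$, but it then invokes Theorems \ref{almost periodic} and \ref{prox almost} to replace that tuple by an almost periodic one $\bar b=(b_1,\dots,b_k)$ with $g_nF\to\{b_1,\dots,b_k\}$, and escapes $C$ by an explicit induction: at stage $i$ it pushes the tuple into nested open sets $U_{j,i}$ of diameter $<1/i$ with $\overline{U_{j,i+1}}\subset U_{j,i}$, disjoint from a neighborhood $V_i$ of $c_i$, where the inductive step combines syndeticity of the return sets of $\bar b$ with the observation (the paper's Claim A) that a point of $X$ fixed by a syndetic set has finite orbit, contradicting minimality on the infinite $X$; the escaping set is then $K=\{z_j\}$ with $z_j=\bigcap_i\overline{U_{j,i}}\in\overline{G\bar b}\subseteq\overline{G\bar a}$. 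You instead extract a Zorn-minimal subset $M\subseteq\overline{G\bar y}$, note $\pi_j(M)=X$ by equivariance and minimality of $X$, show each fiber $\pi_j^{-1}(c)\cap M$ is nowhere dense via the finite-translate covering property of minimal systems, and apply Baire --- handling all $c\in C$ simultaneously rather than one per stage, and dispensing with the almost periodicity and syndeticity machinery entirely. Your recombination step, choosing $i(m)$ with $d_H(h_mg_{i(m)}F,h_mY)<1/m$, is the diagonalization the paper performs implicitly through the inclusion $\overline{G\bar b}\subseteq\overline{G\bar a}$. The two arguments share one engine --- an open piece of a minimal system projecting to a single point of $X$ forces a finite orbit, which is precisely the content of Claim A --- but your Baire packaging is shorter and makes transparent that only minimality, compactness of $M$, and countability of $C$ are used; your closing remark is also correct: Lemma \ref{prox lemma} only serves to pin down $|K|=k$, and since the lemma merely asserts that $K$ is finite and disjoint from $C$, running your orbit-closure and Baire argument directly on $(x_1,\dots,x_n)$ already suffices, with possibly colliding coordinates.
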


\begin{proof} (see Fig.2.)
Write $C=\{c_i:i=1,2,3,...\}$ and $F=\{a_1, a_2,...,a_k\}$ for some positive integer $k$.  From Lemma \ref{prox lemma}, we may suppose that
$a_i$ and $a_j$ are not proximal for any $i\not=j$.
Consider the product action $G\curvearrowright X^k$ defined
by $$g(x_1, x_2,..., x_k)=(gx_1,gx_2,...,gx_k).$$
From Theorem \ref{prox almost}, there is an almost periodic point $(w_1, w_2,..., w_k)\in X^k$ which is proximal to
$(a_1, a_2,...,a_k)$. Then there is $(b_1, b_2,..., b_k)\in \overline{G(w_1,w_2,...,w_k)}\subset X^k$
and a sequence $(g_n)$ with $g_na_i\rightarrow b_i$ for each $1\leq i\leq k$, as $n\rightarrow\infty$.
By the non-proximality of each pair of $a_i$ and $a_j$, these $b_i$'s are pairwise distinct. Note that $(b_1, b_2,..., b_k)$ is
still almost periodic by Theorem \ref{almost periodic}. Set $B=\{b_1, b_2,..., b_k\}\subset X$; then
$g_nF\rightarrow B$. We can suppose that $c_1\notin B$; otherwise replace $B$ by $gB$ for some $g\in G$, by the minimality of  $G\curvearrowright X$ and the infinity of $X$.

Now we inductively define a sequence of neighborhoods $V_i$ of $c_i$ for each $i=1,2,3,...$, and open sets $U_{j, i}$ for each $j=1,2,...,k$ and $i=1,2,3,...$
as follows. Take pairwise disjoint open sets $V_1$ and $U_{1,1}, U_{2,1},..., U_{k,1}$ such that $c_1\in V_1$, ${\rm diam}(U_{j, 1})<1$ and $b_j\in U_{j, 1}$ for $j=1,2,...,k.$
Let $S_1=\{g\in G: gb_j\in U_{j, 1}, \forall\ j=1,2,...,k\}$. Then $S_1$ is syndetic by Theorem \ref{almost periodic}.

{\bf Claim A.} There is some $g_1\in S_1$ such that $c_2\notin \{g_1b_1, g_1b_2,..., g_1b_k\}$. Otherwise, WLOG, we may assume that $c_2=gb_1$ for all $g\in S_1$.
Noting that $e\in S_1$, $b_1$ is fixed by all $g\in S_1$. Since $S_1$ is syndetic, $Gb_1$ is finite and then $X$ is finite by the minimality of $G\curvearrowright X$.
This contradicts the infiniteness of $X$.

From Claim A, we can take pairwise disjoint open sets $V_2$ and $U_{1,2}, U_{2,2},..., U_{k,2}$ such that $c_2\in V_2$, ${\rm diam}(U_{j, 2})<1/2$ and
$g_1b_j\in U_{j, 2}\subset \overline{U_{j, 2}}\subset U_{j, 1}$ for $j=1,2,...,k.$
Applying the above discussions to $c_3$, $\{g_1b_j: j=1,2,...,k\}$, and $\{U_{j, 2}: j=1,2,...,k\}$, we get $g_2\in G$ and
pairwise disjoint open sets $V_3$ and $U_{1,3}, U_{2,3},..., U_{k,3}$ such that $c_3\in V_3$, ${\rm diam}(U_{j, 3})<1/3$ and
$g_2b_j\in U_{j, 3}\subset \overline{U_{j, 3}}\subset U_{j, 2}$ for $j=1,2,...,k.$ Going on in this way, we obtain in the end
a sequence of neighborhoods $V_i$ of $c_i$ for each $i=1,2,3,...$, and open sets $U_{j, i}$ for each $j=1,2,...,k$ and $i=1,2,3,...,$
such that for each positive $m$,
$$(\cup_{i=1}^mV_i)\cap (\cup_{j=1}^kU_{j,m})=\emptyset,  \eqno{(*)}$$
and for each $j=1,2,...,k$ and $i=1,2,3...$,
$$\overline{U_{j, i+1}}\subset U_{j, i}\ \mbox{and}\ {\rm diam}(U_{j, i})<1/i. \eqno{(**)}$$
From $(**)$, we may let $z_j=\cap_{i=1}^\infty\overline{U_{j, i}}$ for $j=1,2,...,k$. Then by the construction,
we see that $(z_1, z_2,..., z_k)\in \overline {G(b_1, b_2, ..., b_k)}\subset \overline {G(a_1, a_2, ..., a_k)}$.
By $(*)$, $\{z_1, z_2,..., z_k\}\cap C=\emptyset.$ Set $K=\{z_1, z_2,..., z_k\}$. Then $K$ satisfies the requirement.
\end{proof}

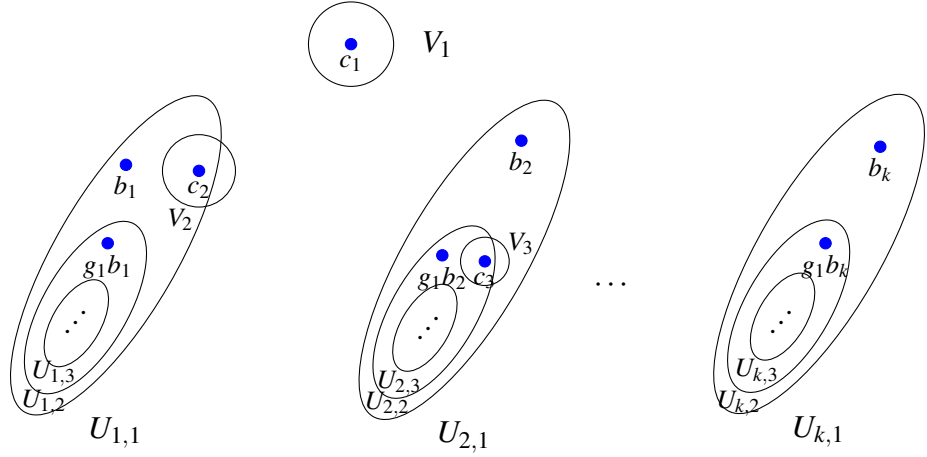
\begin{figure}
\centering
\begin{tikzpicture}[scale=0.8]

 \tikzset{
 bluenode/.style={circle, draw = blue, fill=blue, minimum size=1.5mm, inner sep=0pt},
   blacknode/.style={circle, draw = black, fill=black, minimum size=1mm, inner sep=0pt},
   } 
 \draw (3,4) circle (0.7cm) node [right=0.8cm]{$V_1$};
 \node[bluenode] (C1) at (3,4){};
 \node at (C1)[below] {\footnotesize $c_1$};
 
 \draw (-1,0) [rotate=-30] ellipse (1cm and 3cm) node [below=2cm]{$U_{1,1}$};
 \draw (-1,-1) [rotate=-30] ellipse (0.7cm and 1.6cm) node at (-0.8,-2.7){\footnotesize$U_{1,2}$};
  \draw (-1,-1.3) [rotate=-30] ellipse (0.4cm and 0.8cm) node at (-0.9,-2.2){\footnotesize$U_{1,3}$};
  \draw (0.5,1.9) circle (0.6cm) node at (0.2,1.1) {\footnotesize$V_2$};
  \node[bluenode] (C2) at (0.5,1.9){};
 \node at (C2)[below] {\footnotesize$c_2$};
   \node[bluenode] (b1) at (-0.7,2){};
 \node at (b1)[below] {\footnotesize$b_1$};
  \node[bluenode] (b1') at (-1,0.7){};
 \node at (b1')[below] {\footnotesize$g_1b_1$};
\node at (-1.5, -0.6) [rotate=50]{$\cdots$};

\draw (4,2.8) [rotate=-30] ellipse (1cm and 3cm) node [below=2cm]{$U_{2,1}$};
 \draw (4,1.8) [rotate=-30] ellipse (0.7cm and 1.6cm) node at (4.1,0.1){\footnotesize$U_{2,2}$};
  \draw (4,1.5) [rotate=-30] ellipse (0.4cm and 0.8cm) node at (4.1,0.5){\footnotesize$U_{2,3}$};
  \draw (5.2,0.4) circle (0.4cm) node at (5.8,0.7) {\footnotesize$V_3$};
  \node[bluenode] (C3) at (5.2,0.4){};
 \node at (C3)[below] {\footnotesize$c_3$};
   \node[bluenode] (b2) at (5.8,2.4){};
 \node at (b2)[below] {\footnotesize$b_2$};
  \node[bluenode] (b2') at (4.5,0.5){};
 \node at (b2')[below] {\footnotesize$g_1b_2$};
\node at (4.3, -0.7) [rotate=50]{$\cdots$};

\node at (7.3, 0) [rotate=0]{$\cdots$};

\draw (9,5.8) [rotate=-30] ellipse (1cm and 3cm) node [below=2cm]{$U_{k,1}$};
 \draw (9,4.8) [rotate=-30] ellipse (0.7cm and 1.6cm) node at (9.1,3){\footnotesize$U_{k,2}$};
  \draw (9,4.6) [rotate=-30] ellipse (0.4cm and 0.8cm) node at (9.1,3.6){\footnotesize$U_{k,3}$};
 \node[bluenode] (b3) at (11.7,2.3){};
 \node at (b3)[below] {\footnotesize$b_k$};
  \node[bluenode] (b3') at (10.8,.7){};
 \node at (b3')[below] {\footnotesize$g_1b_k$};
\node at (10.1, -0.6) [rotate=50]{$\cdots$};
\end{tikzpicture}
\caption{Illustration of the construction process}
 \end{figure}

\begin{cor}\label{margulis trick}
Let $X$ be an infinite compact metric space and let a countable group $G$ act on $X$ minimally. Then for any finite sets
$A, B\subset X$, there is a $g\in G$ such that $gA\cap B=\emptyset$.
\end{cor}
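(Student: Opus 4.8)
The plan is to obtain this as a direct specialization of the escaping lemma, the only twist being that the target set $B$ is finite whereas Lemma \ref{escaping lemma} demands a countably \emph{infinite} avoidance set $C$. So first I would absorb $B$ into such a set: since $X$ is infinite (and, being compact metric, separable), I can choose a countable infinite subset $C\subset X$ with $B\subseteq C$. Applying Lemma \ref{escaping lemma} with $F=A$ and this $C$ produces a sequence $(g_n)$ in $G$ and a finite set $K\subset X$ such that $g_nA\to K$ in the Hausdorff metric and $K\cap C=\emptyset$; in particular $K\cap B=\emptyset$.

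The remaining step is to promote the Hausdorff convergence $g_nA\to K$ into the disjointness statement for a single group element. Because $K$ and $B$ are disjoint finite (hence compact) sets, we have $\delta:=d(K,B)>0$. By the meaning of $g_nA\to K$, for all sufficiently large $n$ one has $g_nA\subseteq N(K,\delta)$, where $N(K,\delta)=\{x\in X:d(x,K)<\delta\}$; and since every point of $B$ lies at distance at least $\delta$ from $K$, the neighborhood $N(K,\delta)$ is disjoint from $B$. Consequently $g_nA\cap B=\emptyset$ for such $n$, and choosing $g=g_n$ gives the conclusion.

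I do not expect a genuine obstacle here: all the real work is carried by the escaping lemma, and this corollary is a clean specialization. The only points meriting a line of care are verifying that one may enlarge the finite $B$ to a countable infinite $C$ (harmless, using the infiniteness of $X$) and the routine translation from Hausdorff convergence to eventual containment in a $\delta$-neighborhood of $K$.
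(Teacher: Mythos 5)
Your proposal is correct and follows exactly the paper's one-line proof: enlarge $B$ to a countable infinite set $C$ and apply Lemma \ref{escaping lemma} with $F=A$. The only difference is that you spell out the routine final step (Hausdorff convergence $g_nA\to K$ plus $d(K,B)>0$ gives $g_nA\cap B=\emptyset$ for large $n$), which the paper leaves implicit.
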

\begin{proof}
Take a countable infinite set $C\supset B$. Then applying Lemma \ref{escaping lemma} to the sets $C$ and $A$ will lead to the conclusion.
\end{proof}

Here we remark that Corollary \ref{margulis trick} is used by Margulis in \cite{Margulis} in order to construct a ping-pong-game, the proof of
which relies on a theorem due to Neumann in group theory (\cite{Neumann}). The proof here avoids using any algebraic techniques.

\section{Existence of quasi-Schottky subgroups}

Let $X$ be a connected space. A point $x\in X$ is said to be a \textit{separating point of finite order} if $X\setminus\{x\}$ has finite connected components, otherwise we say that $x$ is  a \textit{separating point of infinite order}.
\begin{lem}\label{stable component}
Let $X$ be a regular curve and $A$ be a finite set of separating points of finite order. Then $X\setminus A$ has finite components.
\end{lem}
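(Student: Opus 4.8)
The plan is to reduce the global statement to a local finiteness at each point of $A$, the heart of the matter being a local lemma that controls how a connected open neighborhood is cut by a single point. First I would record the easy structural facts. Since $X$ is a regular curve it is a Peano continuum, hence locally connected, so every component of an open set is open. As $A$ is finite it is closed, so $X\setminus A$ is open and each of its components $D$ is open. Moreover $D\neq X$ while $X$ is connected, so $D$ is not closed in $X$; since $\overline{D}\setminus D\subseteq A$, connectedness (or the Boundary Bumping Theorem, Theorem~\ref{Boundary bumping}) forces $\overline{D}\cap A\neq\emptyset$. Thus every component of $X\setminus A$ has some $a_i$ in its closure, and it suffices to prove that for each fixed $a\in A$ only finitely many components $D$ of $X\setminus A$ satisfy $a\in\overline{D}$; summing over the $n$ points of $A$ then bounds the total number of components.

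Fix $a=a_i$ and, using regularity together with local connectivity, choose a connected open neighborhood $W$ of $a$ with finite boundary $\partial_X W$ and small enough that $W\cap A=\{a\}$. The reduction is completed by the local claim that $W\setminus\{a\}$ has only finitely many components. Granting this, if $D$ is a component of $X\setminus A$ with $a\in\overline{D}$, then $D$ meets the neighborhood $W$ but avoids $a$, so $D\cap W\subseteq W\setminus A=W\setminus\{a\}$ is nonempty and open, hence contains an entire component of $W\setminus\{a\}$. Distinct $D$'s are disjoint and so contain distinct such components, whence the number of these $D$'s is at most the finite number of components of $W\setminus\{a\}$.

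So the real work is the local claim, which is also where I expect the main difficulty. Here I would use that $a$ is a separating point of finite order, i.e. $X\setminus\{a\}$ has finitely many components, played against regularity. Each component $E$ of $W\setminus\{a\}$ is clopen in $W\setminus\{a\}$, so (as $W$ is connected) $a\in\overline{E}$. If $\overline{E}\subseteq W\cup\{a\}$, then $\overline{E}=E\cup\{a\}$ and $E$ is clopen in $X\setminus\{a\}$, hence a component of $X\setminus\{a\}$; by hypothesis there are only finitely many such $E$. Otherwise $\overline{E}\cap\partial_X W\neq\emptyset$, and here I argue by contradiction: if infinitely many such $E$ existed, then by finiteness of $\partial_X W$ some boundary point $q$ would lie in infinitely many of the closures $\overline{E}$. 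Now shrink to a connected neighborhood $W'$ of $a$ with finite boundary and $\overline{W'}\subseteq W$ (so $\overline{W'}$ avoids $\partial_X W\ni q$). Each such $E$ meets $W'$ (because $a\in\overline{E}$) and also leaves $\overline{W'}$ (because $q\in\overline{E}\setminus\overline{W'}$), so by connectedness each meets the \emph{finite} set $\partial_X W'$; since the $E$'s are pairwise disjoint, this is impossible.

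The main obstacle is precisely this last step: the separating‑order hypothesis is \emph{global}, whereas what is needed is a \emph{local} bound on the number of branches at $a$. The mechanism that bridges the two is to exploit the finite‑boundary neighborhoods guaranteed by regularity, so that infinitely many putative local components would be forced to puncture the finite boundary of an even smaller neighborhood $W'$. In writing this out I would carefully verify the point‑set details that the plan leans on: that components of open sets are open and clopen in the ambient open set, that $\partial_X W'\subseteq\partial_X N$ when $W'$ is taken to be the component containing $a$ of a finite‑boundary neighborhood $N$ with $\overline{N}\subseteq W$, and the elementary fact that a connected set meeting both $W'$ and $X\setminus\overline{W'}$ must meet $\partial_X W'$.
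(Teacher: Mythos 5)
Your proof is correct, and it organizes the argument differently from the paper, although both ultimately rest on the same local dichotomy at a point $a\in A$: a component $E$ of a punctured finite-boundary neighborhood either satisfies $\overline{E}=E\cup\{a\}$, in which case it is clopen in $X\setminus\{a\}$ and hence one of the finitely many components supplied by the separating-point-of-finite-order hypothesis, or its closure escapes to the boundary, in which case a finite boundary set absorbs all such branches. The paper runs this dichotomy inside a proof by contradiction with \emph{sequential} removal of $a_1,\dots,a_n$: at the first index $k$ where infinitely many components appear, it punctures at $a_k$ the component $C$ of $X\setminus\{a_1,\dots,a_{k-1}\}$ containing $a_k$, forces the escaping branches to accumulate on the \emph{previously removed points} $a_1,\dots,a_{k-1}$, and concludes that every small neighborhood $V$ of $a_k$ has infinite boundary, contradicting regularity. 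You avoid both the induction and the contradiction: by letting the escaping branches terminate on the finite set $\partial_X W'$ of a shrunken regular neighborhood rather than on previously removed points, you obtain a genuinely local finiteness lemma at each $a\in A$ independently, together with an explicit bound---the number of components of $X\setminus A$ is at most $\sum_{a\in A}\bigl(c(a)+|\partial_X W'_a|\bigr)$, where $c(a)$ is the number of components of $X\setminus\{a\}$---and the global statement follows by boundary bumping (each component of $X\setminus A$ has some $a\in A$ in its closure) plus summation. This buys modularity and a quantitative conclusion, at the cost of being slightly longer than the paper's telescoping argument. One small simplification: the pigeonhole step selecting a single $q\in\partial_X W$ is superfluous, since \emph{every} case-2 component $E$ already meets $\partial_X W'$ (its closure meets $\partial_X W$, which is disjoint from $\overline{W'}\subseteq W$), so disjointness caps their number by $|\partial_X W'|$ directly; the point-set facts you flag at the end (openness of components, $\partial_X W'\subseteq\partial_X N$ when $W'$ is the component of $a$ in a finite-boundary $N$, and that a connected set meeting $W'$ and $X\setminus\overline{W'}$ meets $\partial_X W'$) all check out, so the argument is complete as written.
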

\begin{proof}
Let $A=\{a_1,\cdots,a_n\}$ for some positive integer $n$. Suppose that the conclusion is false. Then there exists $k\in\{2,\cdots,n\}$ such that $X\setminus\{a_1,\cdots,a_{k-1}\}$ has finite components but $X\setminus\{a_1,\cdots,a_{k}\}$ has infinite components. Let $C$ denote the component of $X\setminus\{a_1,\cdots,a_{k-1}\}$ containing $a_k$. Then $C\setminus \{a_k\}$ has infinite components, saying $B_1, B_2, B_3, \cdots.$
Since $a_k$ is a separating point of finite order, there are only finitely many $i$'s such that $\overline{B_i}=B_i\cup\{a_k\}$.
Hence, there are infinitely many $i$'s with $\overline{B_i}\cap\{a_1,...,a_{k-1}\}\not=\emptyset$. Take an open neighborhood $U$ of $a_k$
such that ${\overline U}\subset X\setminus\{a_1,\cdots,a_{k-1}\}$. Then for every open set $V\subset U$ with $a_k\in V$, the boundary
$\partial_X(V)$ is infinite, which contradicts the regularity of $a_k$.
\end{proof}

\begin{lem}\cite[Theorem 1, p. 160]{Kura}\label{infinite sep pts}
There are at most countably many separating points of infinite order in a continuum.
\end{lem}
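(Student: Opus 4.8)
The plan is to assign injectively to each separating point of infinite order an unordered triple of points drawn from a fixed countable dense set, and then observe that there are only countably many such triples. Fix a countable dense subset $D=\{d_1,d_2,\dots\}$ of $X$ (which exists since $X$ is compact metric, hence separable). The starting point is a structural observation: if $x$ is a separating point, then for every component $C$ of the open set $X\setminus\{x\}$ one has $\overline{C}=C\cup\{x\}$. Indeed, since $C$ is closed in $X\setminus\{x\}$, its closure in $X$ can only pick up the single boundary point $x$, so $\overline{C}\subseteq C\cup\{x\}$; and a boundary bumping argument (the closure of a component of a proper open subset of a continuum must meet the boundary of that set, in the spirit of Theorem \ref{Boundary bumping}) forces $x\in\overline{C}$. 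Consequently each $\overline{C}$ is a subcontinuum, and closures of distinct components meet only at $x$.

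Next I would verify that, when $x$ has infinite order, at least three components of $X\setminus\{x\}$ contain points of $D$. If only finitely many components $C_{i_1},\dots,C_{i_m}$ met $D$, then $D\subseteq\{x\}\cup C_{i_1}\cup\cdots\cup C_{i_m}$; taking closures and using $\overline{C_{i_j}}=C_{i_j}\cup\{x\}$ together with the density of $D$ would give $X=\{x\}\cup C_{i_1}\cup\cdots\cup C_{i_m}$, forcing $X\setminus\{x\}$ to have exactly $m$ components and contradicting infinite order. Hence for each separating point $x$ of infinite order I may pick three distinct components meeting $D$ and choose a point of $D$ in each, obtaining a $3$-element set $T_x=\{a,b,c\}\subseteq D$ whose members lie in three pairwise distinct components of $X\setminus\{x\}$.

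The heart of the argument is the claim that a fixed triple $\{a,b,c\}$ can lie in three distinct components of $X\setminus\{x\}$ for at most one point $x$; this makes the map $x\mapsto T_x$ injective. To prove it, suppose $x\neq y$ both had $a,b,c$ in three distinct components of their respective complements. Since $y\neq x$, the point $y$ lies in a single component $C_*$ of $X\setminus\{x\}$, and $C_*$ can contain at most one of $a,b,c$; so two of them, say $b$ and $c$, lie in components $C_b,C_c$ different from $C_*$, whence $y\notin C_b\cup C_c$ and of course $y\neq x$. Then $\overline{C_b}\cup\overline{C_c}=C_b\cup C_c\cup\{x\}$ is connected (the two closures share $x$), contains both $b$ and $c$, and avoids $y$; so $b$ and $c$ lie in one and the same component of $X\setminus\{y\}$, contradicting that $y$ separates them. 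Therefore $x=y$.

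Combining these steps, $x\mapsto T_x$ is an injection from the set of separating points of infinite order into the countable collection of $3$-element subsets of $D$, so that set is at most countable. I expect the main obstacle to be the first, purely topological step, namely justifying $x\in\overline{C}$ for every component $C$ of $X\setminus\{x\}$ (so that the closures of components glue together precisely at $x$), since in a general continuum the components of $X\setminus\{x\}$ need not be open; this is exactly where boundary bumping does the real work, while everything afterward reduces to elementary set theory and the ``where does $y$ live'' case analysis.
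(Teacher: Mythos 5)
Your proof is correct, and since the paper does not prove this lemma at all but simply quotes it from Kuratowski \cite{Kura}, your triple-assignment argument is in effect a reconstruction of the classical proof of the cited theorem: boundary bumping gives $\overline{C}=C\cup\{x\}$ for each component $C$ of $X\setminus\{x\}$, your closure argument correctly shows that if only finitely many components meet the countable dense set $D$ then those components exhaust $X\setminus\{x\}$, and the ``where does $y$ live'' case analysis does make $x\mapsto T_x$ injective into the countable family of triples from $D$. Two minor remarks: the form of boundary bumping you need (for $C$ a component of the open set $U$, $\overline{C}\cap\partial_X(U)\neq\emptyset$) is the standard variant \cite[5.7]{Nad1} rather than the component-of-$\overline{U}$ version stated as Theorem \ref{Boundary bumping}, so it deserves its own citation; and your argument uses infinitude of the order only to secure three components meeting $D$, so verbatim it proves the stronger classical fact that the set of points $x$ with $X\setminus\{x\}$ having at least three components is countable, which is essentially the form in which the result appears in Kuratowski.
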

Now we are ready to prove the main theorem of the paper.

\begin{thm}\label{main theorem}
Let $G$ be a countable group and $X$ be a totally regular curve. Suppose that $\phi:G\rightarrow {\rm Homeo}(X)$ is a minimal action. Then either the action is topologically conjugate to isometries on the circle $\mathbb S^1$ (this implies that $\phi(G)$ contains an abelian subgroup of index  at most 2), or  has a quasi-Schottky subgroup (this implies that $G$ contains the free nonabelian group $\mathbb Z*\mathbb Z$).
\end{thm}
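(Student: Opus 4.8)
The plan is to follow the dichotomy established in Theorem \ref{dichotomy}: the minimal action $\phi:G\to{\rm Homeo}(X)$ is either equicontinuous or sensitive, and I will show that equicontinuity forces the first alternative while sensitivity forces the second.

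\textbf{The equicontinuous case.} Suppose $\phi$ is equicontinuous. By Theorem \ref{euqicontinuous} the action is topologically conjugate to left translations on a homogeneous space $H/K$, so in particular $X$ is topologically homogeneous. A homogeneous totally regular curve has every point of the same order; since $X$ is one-dimensional (being totally regular, hence Suslinian by Theorem \ref{total dim} and Lemma \ref{dim of suslinian}) and nondegenerate, that common order is a finite number $\geq 2$. By Ayres' characterization, Theorem \ref{same order}, the only such continuum is the simple closed curve, so $X\cong\mathbb S^1$. An equicontinuous action admits a compatible invariant metric (obtained by averaging, as in the structure furnished by Theorem \ref{euqicontinuous}), and with respect to this metric the homeomorphisms act as isometries of $\mathbb S^1$; hence $\phi$ is conjugate to an action by isometries. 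The isometry group of $\mathbb S^1$ is $O(2)$, whose rotation subgroup has index $2$, giving the abelian-subgroup-of-index-$\leq 2$ conclusion.

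\textbf{The sensitive case.} Suppose $\phi$ is sensitive. Since $X$ is totally regular it has a point of order $2$ by Theorems \ref{loc sep} and \ref{finite order pt}, in particular a point of finite order, so Theorem \ref{contract neighbor} applies: every point has a $G$-contractible neighborhood. By Lemma \ref{contractible} the action is strongly $\epsilon$-proximal for some $\epsilon>0$, and by Proposition \ref{con fin} every $\mu\in\mathcal M(X)$ can be pushed by a sequence $(g_n)$ to a finitely-supported measure. The strategy is now to build a quasi-Schottky pair $h_1,h_2$ directly, feeding the escaping lemma into a ping-pong construction. Concretely, I will use contractibility to produce group elements that shrink chosen regions to tiny neighborhoods of finitely many attracting points, use Corollary \ref{margulis trick} (equivalently Lemma \ref{escaping lemma}) to move the relevant finite sets of attractors and repellers off one another so that the five open sets $U_1,U_2,V_1,V_2,W$ required in the definition of a quasi-Schottky group can be chosen pairwise disjoint, and then verify the inclusions $h_i(\cdots)\subset U_i$, $h_i^{-1}(\cdots)\subset V_i$ by taking sufficiently high powers of the contracting elements. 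The ``ping-pong'' lemma of Tits then yields $\mathbb Z*\mathbb Z\hookrightarrow G$.

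\textbf{Main obstacle.} The delicate part is the sensitive case: arranging the five disjoint open sets with the precise nesting so that the north--south dynamics of $h_1,h_2$ genuinely cross each other's domains. On $\mathbb S^1$ Margulis had an interval structure to exploit, whereas on a totally regular curve the local topology around the attracting and repelling points is more complicated, so the separation of attractors from repellers (where the escaping lemma replaces Margulis' appeal to Neumann's theorem) and the choice of the five regions must be done carefully using finite boundaries and the contractibility of neighborhoods. I expect the bookkeeping that guarantees, simultaneously, the disjointness of $U_1,U_2,V_1,V_2,W$ and all eight containment relations to be where the real work lies; the equicontinuous case, by contrast, is essentially an identification of $X$ with $\mathbb S^1$ followed by a standard averaging argument.
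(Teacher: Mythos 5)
Your dichotomy and your equicontinuous case follow the paper, but in the sensitive case there is a genuine gap at precisely the point you defer as ``bookkeeping.'' The hard part is not arranging the five disjoint open sets --- Corollary \ref{margulis trick} disposes of that in two lines --- but producing the north--south dynamics in the first place. Proposition \ref{con fin} only gives weak$^*$ convergence $g_n\mu\rightarrow\nu$ with $\nu$ finitely supported; by itself this says nothing about where points of $X$ are sent, and ``taking sufficiently high powers of the contracting elements'' has no basis, since no single group element is known to have attracting/repelling behaviour. The paper's mechanism, absent from your sketch, is the measure-theoretic characterization of totally regular curves (Theorem \ref{total dim}, i.e.\ Theorem \ref{sub theorem}): choosing the atomless probability measure $\mu$ so that $\mu(Y_n)\rightarrow 0$ forces ${\rm diam}(Y_n)\rightarrow 0$ for subcontinua, one gets for every subcontinuum $K\subset X\setminus {\rm supp}(\nu)$ that $\mu(g_n^{-1}K)\rightarrow 0$, hence ${\rm diam}(g_n^{-1}K)\rightarrow 0$, and via Lemma \ref{peano} this upgrades to: $g_n^{-1}$ converges uniformly on compact subsets of each component of $X\setminus{\rm supp}(\nu)$ to a point. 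To conclude that only finitely many limit points arise one also needs $X\setminus{\rm supp}(\nu)$ to have finitely many components, i.e.\ ${\rm supp}(\nu)$ must consist of separating points of \emph{finite} order (Lemma \ref{stable component}); securing this is a second, essential use of the escaping lemma (Lemma \ref{escaping lemma}) together with Lemma \ref{infinite sep pts}, moving ${\rm supp}(\nu)$ off the countably many separating points of infinite order. Your plan invokes escaping only to separate attractors from repellers, so this use is missed. Once the uniform convergence of $g_n^{-1}|_{X\setminus A}$ to a finite set $B$ is in hand, the ping-pong pair is $g_n$ (for $n$ large) and the conjugate $hg_nh^{-1}$, where $h$ is supplied by Corollary \ref{margulis trick} --- no powers are taken.

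A smaller slip in your equicontinuous case: one-dimensionality of $X$ does not by itself imply that the common order of points is finite (an order need not be attained by any finite bound even when all boundaries in a basis are finite). The paper instead extracts a point of order $2$ from Whyburn's theorems (Theorems \ref{loc sep} and \ref{finite order pt}) --- which you yourself cite in the sensitive case --- and spreads it to all points by the homogeneity furnished by Theorem \ref{euqicontinuous}; with that correction, Ayres' Theorem \ref{same order} gives $X\cong\mathbb S^1$, and your averaging over the compact closure of $\phi(G)$ to obtain an invariant metric is standard and matches the paper's appeal to Margulis.
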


\begin{proof}
From Theorem \ref{dichotomy}, we discuss in two cases:

\medskip
{\bf Case 1.} The action $G\curvearrowright X$ is equicontinuous. Then it topologically conjugates to minimal left translations on some
homogenous space $H/K$ by Theorem \ref{euqicontinuous}, where $H$ is a compact metric topological group and $K$ is a closed subgroup of $H$.
From Theorem \ref{loc sep} and Theorem \ref{finite order pt}, $X$ has a point of finite order; and then all points of $X$ have the same order by the topological homogeneity of $X$.
Hence $X$ is a simple closed curve by Theorem \ref{same order}. Then it is a canonical fact that  $G\curvearrowright X$ is topologically
conjugates to isometries on $\mathbb S^1$ and $\phi(G)$ contains an abelian subgroup of index at most 2 (see \cite[Lemma 3]{Margulis}).

\medskip
{\bf Case 2.} The action $G\curvearrowright X$ is sensitive. By Theorems \ref{loc sep} and \ref{finite order pt}, $X$ has a point of finte order. Then, by Theorem \ref{contract neighbor}, every $x\in X$ has a contractible neighborhood.
It follows from Theorem \ref{total dim} and Definition \ref{one dim} that there is an atomless Borel probability measure $\mu$
on $X$ such that for any sequence of subcontinua $(Y_n)$ of $X$,
$$
\mu(Y_n)\rightarrow 0 \Longrightarrow \text{diam}(Y_n)\rightarrow 0. \eqno{(a)}
$$
Applying Proposition \ref{con fin} to $\mu$, there is a sequence $(g_n)$ in $G$ and a Borel probability measure $\nu$ of finite support on $X$ such that
$g_n\mu\rightarrow \nu$. Since $X$ has at most countably many separating points of infinite order by Lemma \ref{infinite sep pts}, we may further assume that
each point of ${\rm supp}(\nu)$ is a separating point of finite order. In fact, it follows from the Escaping Lemma (Lemma \ref{escaping lemma}) that $\overline{G\nu}$
always contains some element whose support consists of separating points of finite order.

Set $A={\rm supp}(\nu)=\{a_1,a_2,...,a_k\}$ for some positive integer $k$. Then for each subcontinuum $K\subset X\setminus A$,
we have
$$
0=\nu(K)\geq\limsup\limits_{n\to\infty}g_n\mu(K)=\limsup\limits_{n\to\infty}\mu(g_n^{-1}K)\geq 0.\eqno(b)
$$
Then, from (a), $\text{diam}(g_n^{-1}K)\rightarrow 0$. Let $C$ be a component of $X\setminus A$. Then $C$ is open since every component of an open set is open in Peano continuum. By Lemma \ref{peano}, for every compact subset $F$ of $C$, there is a connected compact subset $K$ satisfying $F\subset K\subset C$. Thus,
 passing to a subsequence if necessary, we may suppose $g_n^{-1}|_{C}$ converges uniformly on compact sets to a point.
This together with Lemma \ref{stable component} imply that
there is a finite set $B=\{b_1, b_2,...,b_p\}$ such that
$$g_n^{-1}|_{X\setminus A}\ \mbox{converges uniformly on compact sets to a map to}\ B. \eqno(c)$$
We may suppose $B\cap A=\emptyset$; otherwise, by Corollary \ref{margulis trick}, we can replace $B$ by $gB$ and $(g_n^{-1})$ by $(gg_n^{-1})$ for some $g\in G$.
Applying Corollary \ref{margulis trick} again, there is some $h\in G$ with
$$h(A\cup B)\cap (A\cup B)=\emptyset.$$
Set $C=hA$ and $D=hB$. Then
$$
hg_n^{-1}h^{-1}|_{X\setminus C}\ \mbox{convergences unifomaly on compact sets to a map to}\ D. \eqno(d)
$$
Now take pairwise disjoint nonempty open sets $U_1, V_1, U_2, V_2$ and $W$ such that
$$A\subset U_1, B\subset V_1, C\subset U_2,\ \mbox{and}\ D\subset V_2.$$

From (c), for sufficiently large $n$, we have $g_n^{-1}(X\setminus U_1)\subset V_1$. Thus $X\setminus g_n^{-1}(U_1)\subset V_1$, hence $X\setminus V_1\subset g_n^{-1}(U_1)$. Then $g_n(X\setminus V_1)\subset U_1$. Therefore,
$$
g_n(U_1\cup U_2\cup V_2\cup W)\subset U_1, \ \ \ g_n^{-1}(U_2\cup V_1\cup V_2\cup W)\subset V_1.
$$
Similarly from (d), for sufficiently large $n$, we have
$$
hg_nh^{-1}(U_1\cup U_2\cup V_1\cup W)\subset U_2, \ \ \ hg_n^{-1}h^{-1}(U_1\cup V_1\cup V_2\cup W)\subset V_2.
$$
Thus $g_n$ and $hg_nh^{-1}$ generate a quasi-Schottky subgroup of $G$.
\end{proof}


\begin{thebibliography}{999}

\bibitem{Auslander} J. Auslander, \textit{Miniml flows and their extensions.} North-Holland Mathematics Studies,
153. Notas de Matemtica [Mathematical Notes], 122. North-Holland Publishing
Co., Amsterdam, 1988.

\bibitem{Ayres} W. Ayres, \textit{On the regular points of a continuum}.  Trans. Amer. Math. Soc. 33 (1931),  252-262.

\bibitem{Bowditch} B.  Bowditch, \textit{Hausdorff dimension and dendritic limit sets.} Math. Ann. 332 (2005), 667-676.

\bibitem{Coornaert}  M. Coornaert, \textit{Topological dimension and dynamical systems}. Universitext. Springer, Cham, 2015. Translated and revised from the 2005 French original.

\bibitem{Day} M. Day, \textit{Means for the bounded functions and ergodicity of the bounded representations of semi-groups.} Tran. Amer. Math. Soc. 69 (1950), 276-291.

\bibitem{Dru} C. Drutu, M. Kapovich, \textit{Geometric group theory. With an appendix by Bogdan Nica.} American Mathematical Society Colloquium Publications, 63. American Mathematical Society, Providence, RI, 2018.

\bibitem{Duch} B. Duchesne, N. Monod, \textit{Group actions on dendrites and curves.} Ann. Inst. Fourier, 68 (2018), 2277-2309.

\bibitem{Harrold} S. Eilenberg, O. Harrold, \textit{Continua of finite linear measure}. Amer. J. Math. 65 (1943), 137-146.

\bibitem{Ghys1} \'E. Ghys, V. Sergiescu, \textit{Sur un groupe remarquable de diffeomorhismes du cercle}. Comment. Math. Helv. 62 (1987), 185-239.

\bibitem{Ghys2} \'E. Ghys, Groups acting on the circle, Enseign. Math. 47 (2001), 329-407.

\bibitem{Glasner} E. Glasner, M. Megrelishvili,\textit {Group actions on treelike compact spaces}. Sci. China Math. 62 (2019), 2447-2462.

\bibitem{AN} E. el Abdalaoui, I. Naghmouchi, \textit{A note on actions with finite orbits on dendrites}, https://arxiv.org/abs/1903.02372.

\bibitem{Kura} K. Kuratowaski, \textit{Topology}, Vol II, New York, 1968.

\bibitem{Lodha} Y. Lodha, J. Moore, \textit{ A nonamenable finitely presented group of piecewise projective homeomorphisms}. Groups Geom. Dyn. 10 (2016),  177-200.

\bibitem{Margulis} G. Margulis, \textit{Free subgroups of the homeomorphism group of the circle}. C. R. Math. Acad. Sci. Paris 331 (2000), 669-674.

\bibitem{MN} H. Marzougui, I. Naghmouchi, \textit{Minimal sets for group actions on dendrites.}
Proceedings of American Mathematical Society, 144 (2016), 4413-4425.

\bibitem{Malyutin} A.  Malyutin, \textit{On groups acting on dendrons.} Zap. Nauchn. Sem. S. Peterburg. Otdel. Mat. Inst. Steklov. (POMI), 415(Geometriya i Topologiya. 12):62-74, 2013. transl. J. Math. Sci. (N.Y.) 212 (2016), 558-565.

\bibitem{Minsky} Y. Minsky, \textit{On rigidity, limit sets, and end invariants of hyperbolic 3-manifolds.} J. Amer. Math. Soc.,
7(1994), 1994.

\bibitem{Monod} N. Monod, \textit{Groups of piecewise projective homeomorphisms}. Proc. Natl. Acad. Sci. USA 110 (2013),  4524-4527.

\bibitem{Nad1} S. Nadler,  \textit{Continuum theory}, volume 158 of \textit{Monographs and Textbooks in Pure and Applied Mathematics}. Marcel Dekker, Inc., New York, 1992. An introduction.

\bibitem{Navas} A. Navas, \textit{Groups of Circle Diffeomorphisms, Sociedade Brasileira de Matematica} (English translation of the original Grupos de difeomorfismos del circulo), 2007.

\bibitem{Neumann} B. Neumann, \textit{ Groups covered by permutable subsets}. J. Lond. Math. Soc. 29 (1954), 236-248.

\bibitem{NTT} J. Nikiel, H.  Tuncali, E. Tymchatyn,  \textit{Continuous images of arcs and inverse limit methods}. Mem. Amer. Math. Soc. 104, 1993.

\bibitem{Ol} A. Ol'shanskii, \textit{On the question of the existence of an invariant mean on a group}.
Uspekhi Mat. Nauk 35 (1980),  199-200. In Russian. English translation, Russ. Math. Surv. 35 (1980),  180-181.

\bibitem{Rogers} C. Rogers,  \textit{Hausdorff measures}. Cambridge University Press, London-New York, 1970.

\bibitem{Shi} E. Shi, \textit{Free subgroups of dendrite homeomorphism group.} Topology Appl. 159 (2012),  2662-2668.

\bibitem{SY1} E. Shi, X. Ye, \textit{Equicontinuity of minimal sets for amenable group actions on dendrites.} Dynamics: topology and numbers, 175-180, Contemp. Math., 744, Amer. Math. Soc., [Providence], RI, 2020.

\bibitem{SY2} E. Shi, X. Ye, \textit{Periodic points for amenable group actions on uniquely arcwise connected continua.} to appear in Ergodic Theory and Dynamical Systems. (https://doi.org/10.1017/etds.2020.82.)

\bibitem{Tits} J. Tits, \textit{Free subgroups in linear groups}. J. Algebra 20 (1972), 250-270.

\bibitem{Whyburn} G.T. Whyburn, \textit{Concerning continua of finite degree and local separating points}.
American Journal of Mathematics, vol. 57 (1935), pp. 11-18.


\bibitem{WSXX} S. Wang, E. Shi, H. Xu, Z. Xie, \textit{Sensitive group actions on regular curves of almost $\leq n$ order,}  arXiv:2102.00401.
\end{thebibliography}
\end{document}